\documentclass[11pt,reqno]{amsart}

\usepackage[varg]{txfonts}

\usepackage{anyfontsize}

\usepackage{amssymb, amsmath, amsthm, amsbsy, mathrsfs, mathtools, mathdots, bbm, stackrel, cancel, ulem, graphicx, relsize, dynkin-diagrams, upgreek} 

\usepackage[toc, page]{appendix}

\usepackage[all, cmtip]{xy}
\usepackage{tikz, tikz-cd}
\usetikzlibrary{math}

\usepackage[mathscr]{euscript}
\usepackage[scr=boondoxo]{mathalfa}

\usepackage{setspace}
\usepackage[shortlabels]{enumitem}
\setlist[enumerate]{itemsep=1pt, topsep=2pt}
\setlist[itemize]{itemsep=1pt, topsep=2pt}

\usepackage{color, hyperref}
\hypersetup{colorlinks=true, linktoc=all, linkcolor=blue, citecolor=blue}

\definecolor{mygreen}{RGB}{0,170,0}

\DeclareMathOperator{\soc}{Soc}

\DeclareMathOperator{\en}{End}

\DeclareMathOperator{\id}{Id}
\makeatletter
\newcommand{\oset}[3][0ex]{%
  \mathrel{\mathop{#3}\limits^{
    \vbox to#1{\kern-2\ex@
    \hbox{$\scriptstyle#2$}\vss}}}}
\makeatother

\newcommand{\tl}[1]{\tilde{#1}}
\newcommand{\wtl}[1]{\widetilde{#1}}

\newcommand{\A}{\alpha}
\newcommand{\B}{\beta}
\newcommand{\G}{\gamma}
\newcommand{\D}{\delta}
\newcommand{\la}{\lambda}

\newcommand{\C}{\mathbb{C}}

\newcommand{\Z}{\mathbb{Z}}

\newcommand{\tb}[1]{\textbf{#1}}

\newcommand{\ti}[1]{\textit{#1}}

\newcommand{\ol}[1]{\overline{#1}}

\newcommand{\mr}[1]{\mathrm{#1}}

\newcommand{\fk}[1]{\mathfrak{#1}}

\newcommand{\p}[1]{\begin{proof}#1\end{proof}}

\newcommand{\dia}[1]{\vspace{-2mm}\begin{center}\begin{tikzcd}[ampersand replacement = \&]#1\end{tikzcd}\end{center}\vspace{-1mm}}
\newcommand{\ddia}[2]{\vspace{-2mm}\begin{center}\begin{tikzcd}[ampersand replacement = \&, #1]#2\end{tikzcd}\end{center}\vspace{-1mm}}
\newcommand{\eq}[1]{\begin{equation}\begin{aligned}#1\end{aligned}\end{equation}}
\numberwithin{equation}{section}

\newcommand{\comment}[1]{}

\newcommand{\bee}[1]{$$\begin{aligned}#1\end{aligned}$$}

\theoremstyle{plain}
\newtheorem{thm}{Theorem}[section]

\newtheorem{prp}[thm]{Proposition}
\newtheorem{cor}[thm]{Corollary}

\theoremstyle{definition}
\newtheorem{dfn}[thm]{Definition}

\theoremstyle{remark}
\newtheorem{rmk}[thm]{Remark}

\author{}
\date{\today}
\title{Intertwiners of representations of exceptional type quantum affine superalgebras}

\begin{document}

\author{Keshav Dahiya and Evgeny Mukhin} 
\address{EM: Department of Mathematical Sciences,
Indiana University Indianapolis,
402 N. Blackford St., LD 270, 
Indianapolis, IN 46202, USA}
\email{emukhin@iu.edu} 

\address{KD: Department of Mathematical Sciences,
Indiana University Indianapolis,
402 N. Blackford St., LD 270, 
Indianapolis, IN 46202, USA}
\email{kkeshav@iu.edu} 

\begin{abstract}
    We give explicit formulas for the  smallest non-trivial irreducible representation $V$ of quantum affine superalgebras in types D$_{2\vert 1;\alpha}$, $\dim V=18$ (in the all-fermionic parity), F$_{3\vert 1}$, $\dim V=41$ (in the distinguished parity), and G$_{2\vert 1}$, $\dim V=32$ (in the distinguished parity), both in the Drinfeld-Jimbo and in the new Drinfeld realizations. We use this information to obtain an explicit expression for the corresponding $R$-matrices.
    
\medskip

\centerline{
  \textbf{\textit{Keywords:\ }}{R-matrices, $q$-characters, quantum affine superalgebras, basic exceptional superalgebras.}}

  \centerline{
  \textbf{\textit{AMS Classification numbers:\ }}{81R10 (primary), 16T25, 17B10, 17B37.}}
  
\end{abstract}

\maketitle

\section{Introduction} 
Solutions of the quantum Yang-Baxter equation (QYBE) are important for various reasons. They have applications in the exactly solvable integrable systems, quantum groups, knot invariants, and quantum computing.

This is the third in a series of papers on finding explicit expressions for solutions of  QYBE $\check{R}(z)$ which are intertwiners of finite-dimensional representations of quantum affine algebras $U_q\tl{\mathfrak{g}}$. In \cite{DM25a} we developed a method for obtaining such solutions using the theory of $q$-characters and applied it to the case of $V(z)\otimes V$, where $V$ is the first fundamental representation, for all untwisted quantum affine algebras. In \cite{DM25b} we treated the tensor squares of the first fundamental representations of all twisted type quantum affine algebras. In the current text, we consider the basic super-symmetric cases. The $R$-matrices for first fundamental representations for $\mathfrak{g}$ of types $\mathfrak{sl}_{m|n}$, $\mathfrak{osp}_{m|2n}$ are known, see Sections \ref{super:sl} and \ref{super:osp}, and we treat all the remaining cases. We give three new solutions of the QYBE, related to the smallest non-trivial irreducible representations $V$ of the three basic exceptional type quantum affine superalgebras $U_q\tl{\mathfrak{g}}$, where $\mathfrak{g}$ is of types D$_{2\vert 1;\alpha}$ in the all-fermionic parity, F$_{3\vert 1}$ in the distinguished parity, and G$_{2\vert 1}$ in the distinguished parity. In all three cases,  $V$, as $U_q\mathfrak{g}$-module, is a quantization of $\mathfrak g\oplus \C$. In particular,
the corresponding dimensions of $V$ are $18$, $41$, and $32$, respectively.

The answer, see Theorems \ref{superD:Rcheck}, \ref{superF:Rcheck}, \ref{superG:Rcheck}, is given in terms of maps of $U_q\mathfrak{g}$-modules. Due to nontrivial multiplicities, the essential part of the answer is given by $2\times 2$  and $4\times 4$ matrices in the case of D$_{2\vert 1;\alpha}$, and by $2\times 2$ and $3\times 3$ matrices in the cases of   F$_{3\vert 1}$, and G$_{2\vert 1}$. While the structure of these matrices is similar to the ones appearing in the even cases, we observe a number of new features. In the cases of  F$_{3\vert 1}$ and  G$_{2\vert 1}$, for some values of the spectral parameter $z$, the composition series of $V(z)\otimes V$ consists of three irreducible subfactors and all of them can be found from the $R$-matrix. In these cases, $R$-matrix has a pole both at $z$ and $z^{-1}$. In the case of D$_{2\vert 1;\alpha}$, $V\otimes V$ is not completely reducible as $U_q\mathfrak{g}$-module, and $\check{R}(0)$ has a one-dimensional kernel making $V$ a non-real $U_q\tl{\mathfrak{g}}$-module.

The quantum affine superalgebras are much less understood compared to the even case. Several foundational results such as establishing the Drinfeld loop realization and a classification of finite-dimensional irreducible representations are still open problems for almost all cases. The theory of the $q$-characters is also less developed due to the absence of the important concept of the dominant monomial for fermionic roots.

This provides additional difficulties to applying our method to obtain $R$-matrices. In particular, one challenge is to compute the submodules of the tensor product $V(z)\otimes V$ using the theory of $q$-characters. The analog of Theorem 6.7 in \cite{FM01}, which relates the poles of $R$-matrices to the irreducibility of the tensor product, should still carry forward to the supersymmetric cases, however, a rigorous proof is still to be given. 
The finite type $R$-matrix, which is used for finding $\check{R}(0)$ is also not available.
A method for obtaining $R$-matrices using the extended tensor product graph method for supersymmetric cases can be found in \cite{GZ03}. However, it is applied only for cases with trivial multiplicity.

As a result, we end up writing the explicit formulas for the representations in Drinfeld-Jimbo realization and then obtain the decompositions of tensor products and the intertwiners using computer computations. We also give explicit formulas for the same representations in terms of (conjectured) new Drinfeld realization. In particular, the goal of this paper is to provide some basic but important information on several examples of tensor products rather than proving general theorems. The explicit actions, tensor product decompositions, and even dimensions of $U_q\tl{\mathfrak{g}}$-modules are often unknown. We are not aware of any ways at all which could help to avoid brute force computations and obtain this information systematically.

Note that the comultiplication depends on the choice of Dynkin diagram. The $R$-matrices in different choices must be similar. It would be interesting to find the similarity transformation explicitly.

\medskip 

The paper is organized as follows. In Section \ref{super:QAA}, we put together the known facts about quantum affine superalgebras and discuss the open problems in some more detail. In Sections \ref{super:sl} and \ref{super:osp}, we write the already known $R$-matrices for the first fundamental representations of $U_q\wtl{\mathfrak{sl}}_{m\vert n}$ and $U_q\wtl{\mathfrak{osp}}_{m\vert 2n}$. 
In Sections \ref{super:D}, \ref{super:F} and \ref{super:G}, we give the $R$-matrices for  non-trivial representations of least dimension, in the exceptional types, in the all fermionic parity for the case of D$_{2\vert 1;\A}$, and in the distinguished parity for the cases of F$_{3\vert 1}$, G$_{2\vert 1}$. 

\section{Quantum affine superalgebras}
\label{super:QAA}

We use the following general notation.

\begin{enumerate}

\item Let $\mathfrak{g}$ be the Lie superalgebra of type $\mathfrak{gl}_{m\vert n}$, $\mathfrak{osp}_{m\vert 2n}$, D$_{2\vert 1;\alpha}$, F$_{3\vert 1}$, or G$_{2\vert 1}$. Let $\mr{I}$ be the set of nodes of a Dynkin diagram of $\mathfrak{g}$, and let $C=(C_{ij})_{i,j\in\mr{I}}$ be the corresponding Cartan matrix. 
Let $r=\lvert\mr{I} \rvert$ be the number of nodes in the Dynkin diagram chosen.

\item For $i\in\mr{I}$, let $s_i$ be the parity of the $i$-th node. We set $s_i=0$ if the $i$-th root is even (or bosonic) and $s_i=1$ if the $i$-th root is odd (or fermionic). For all cases we consider in this paper, $s_i=1$ if and only if $C_{ii}=0$. (In other words, we do not consider non-isotropic fermionic nodes.) Let $D=\text{diag}(d_1,\dots,d_r)$ be such that $B=DC$ is symmetric, and $d_i\in\mathbb{Z}$ are relatively prime.

\item Let $\tl{\mathfrak{g}}=\mathfrak{g}\otimes \mathbb{C}[t,t^{-1}]$ be the loop Lie algebra associated to $\mathfrak{g}$. Let $\tl{\mr{I}}=\mr{I}\cup\{0\}$. Let  $\tl{C}=(\tl{C}_{ij})_{i,j\in\tl{\mr{I}}}$ and $\tl{B}=(\tl{B}_{ij})_{i,j\in\tl{\mr{I}}}$ be the corresponding affine Cartan and symmetrized Cartan matrices. Let $a=(a_0,\dots,a_r)$ be the sequence of positive integers such that $\tl Ca^T=0$ and such that $a_0,\dots, a_r$ are relatively prime.

\comment{
\item Let $\A_i$, $i\in\tl{\mr{I}}$, be the simple roots corresponding to a chosen Dynkin diagram. Denote by $(.,.)$ the symmetric bilinear form induced on the set of all roots by $(\A_i,\A_j)=\tl{B}_{ij}$.

\item In a Dynkin diagram, an even root will be denoted by $\bigcirc$, an odd root $\A$ with $(\A,\A)=0$ will be denoted by $\bigotimes$, and an odd root $\A$ with $(\A,\A)\ne0$ will be denoted by $\bullet$. A cross $\times$ will denote $\bigcirc$ or $\bigotimes$.
}



\item Let $q\in\C^\times$ be such that $q$ is not a root of unity. We fix a value of logarithm of $q$ and using that define $q^a$ for any $a\in\C$.
Let $q_j=q^{d_j}$, $j\in \tl{\mr{I}}$. For $k,m\in\C$, such that $q^{2k}\neq \pm 1$, and $n\in\Z$,  set
$$[m]_k=\frac{q^{km}-q^{-km}}{q^k-q^{-k}}\ ,\quad [n]_k^{\mr{i}}=\frac{q^{kn}+(-1)^{n-1}q^{-kn}}{q^k+q^{-k}}.$$
We write $[m]_1$ as $[m]$ and $[n]_1^{\mr{i}}$ as $[n]^{\mr{i}}$.\\
Note that $\lim_{q\to 1}\,[m]_k=m$, $\lim_{q\to 1}[n]_k^{\mr{i}}=1$ if $n$ is an odd integer, and $\lim_{q\to 1}[n]_k^{\mr{i}}=0$ if $n$ is an even integer.

\item All representations are assumed to be finite-dimensional. We consider quantum affine algebras of level zero only.

\item We use the standard convention of supersymmetric spaces. Given operators $A$, $B$ and vectors $v$, $w$, $(A\otimes B)(v\otimes w)=(-1)^{p_B\,p_v}(Av\otimes Bw)$, where $p_B$, $p_v$ are parities of operator $B$ and vector $v$ respectively.
\end{enumerate}

\subsection{Drinfeld-Jimbo realization}

\begin{dfn}\label{DJ:super}
    The quantum affine superalgebra $U_q\tl{\mathfrak{g}}$ (at level zero), corresponding to a choice of Dynkin diagram, is the unital associative algebra over $\mathbb{C}$ with generators $K_i^{\pm 1}$, $E_i$, $F_i$, $i\in \tl{\mr{I}}$, modulo relations: 
    $$K_iK_i^{-1}=K_i^{-1}K_i=1\,\,,\quad K_iK_j=K_jK_i\,\,,\quad K_0^{a_0}K_1^{a_1}\cdots K_r^{a_r}=1\,\,,$$
    $$K_iE_jK_i^{-1}=q^{\tl{B}_{ij}}E_j\,\,,\quad K_iF_jK_i^{-1}=q^{-\tl{B}_{ij}}F_j\,\,,\quad E_iF_j-(-1)^{s_is_j}\,F_jE_i=\D_{ij}\frac{K_i-K_i^{-1}}{q_i-q_i^{-1}}\,\,,$$
    $$E_i^2=F_i^2=0\ ,\ \text{ if } s_i=1\ ,$$ 
    $$E_iE_j-(-1)^{s_is_j}E_jE_i=0\ ,\quad F_iF_j-(-1)^{s_is_j}F_jF_i=0\ ,\ \text{ if } i\ne j\text{ and }\tl{B}_{ij}=0\ ,$$ 
    and higher order Serre relations\footnote{We do not give the Serre relations explicitly since they are not important for this text. However, we do check those relations in the modules we discuss.}, cf. Proposition 6.3.1 in \cite{Y99}. 
\end{dfn}
We note that in our presentation we set the value of the central parameter to zero as it is always zero in all finite-dimensional representations. 

\comment{
\begin{dfn}
Let $\A,\B$ be any two roots of the affine superalgebra $\tl{\mathfrak{g}}$ with parities $p_{\A},p_{\B}$ respectively, and $X_{\A}$, $X_{\B}$ be the corresponding elements in $U_q\tl{\mathfrak{g}}$.
Define  
$$[X_{\A},X_{\B}]=X_{\A}X_{\B}-(-1)^{p_{\A}p_{\B}}\,X_{\B}X_{\A}\ ,$$
$$\scm{X_{\alpha},X_{\beta}}=X_{\A}X_{\B}-(-1)^{p_{\A}p_{\B}}q^{-(\A,\B)}X_{\B}X_{\A}\ .$$
\end{dfn}

\begin{prp}[\cite{Y99}]
\label{superserre}
    The generators $E_i$, $i\in\tl{\mr{I}}$, of $U_q\tl{\mathfrak{g}}$ satisfy the following Serre relations:
    \begin{enumerate}
        \item $E_i^2=0\ ,\text{if } p_i=1\ ,$ 
        \item $E_iE_j-(-1)^{p_ip_j}E_jE_i=0\ ,\ \text{if } i\ne j\text{ and }\tl{B}_{ij}=0)$\ ,
        \item $\scm{E_i,\scm{E_i,\dots,\scm{E_i,E_j}\dots}\,}\ (E_i\text{ appearing }i\text{ times}),\ \text{if }\tl{B}_{ii}=0\text{ and }\frac{2p_i\tl{B}_{ij}}{\tl{B}_{ii}}\text{ is even}\ ,$
        \item $[\,\scm{\scm{E_i,E_j},E_k},E_j\,]=0$ if \begin{tikzpicture}[baseline=-5pt]
            \node at (0,0) {$\times$};
            \node at (2,0) {$\otimes$};
            \node at (4,0) {$\times$};
            \node at (0,-0.25) {$i$};
            \node at (2,-0.25) {$j$};
            \node at (4,-0.25) {$k$};
            \draw (0.1,0) -- (1.9,0);
            \draw (2.1,0) -- (3.9,0);
            \node at (1,-0.25) {$-x$};
            \node at (3,-0.25) {$x$};
        \end{tikzpicture} $(x\ne 0)$\ ,
        \item $[\,\scm{\scm{E_i,E_j},\scm{\scm{E_i,E_j},E_k}},E_j\,]=0$ if \begin{tikzpicture}[baseline=-5pt, root/.style={circle, draw, thick, minimum size=6pt, inner sep=0pt}]
            \node at (0,0) {$\otimes$};
            \node at (2,0) {$\otimes$};
            \node[root] at (4,0) {};
            \node at (0,-0.25) {$i$};
            \node at (2,-0.25) {$j$};
            \node at (4,-0.25) {$k$};
            \draw (0.1,0) -- (1.9,0);
            \draw (2.1,0.05) -- (3.9,0.05);
            \draw (2.1,-0.05) -- (3.9,-0.05);
            \node at (3,0) {\scalebox{1.1}{$<$}};
        \end{tikzpicture}\ ,
    \end{enumerate}
    The generators $F_i$, $i\in\tl{\mr{I}}$, also satisfy the same Serre relations as above.
\p{
cf. Proposition 6.3.1 in \cite{Y99}. $\hfill \square$
}\end{prp}
}

The algebra $U_q\tl{\mathfrak{g}}$ has a Hopf algebra structure, with comultiplication $\Delta$, given on the generators $E_i,F_i,K_i^{\pm 1}$, $i\in\tl{\mr{I}}$, by 
\bee{
\Delta(X)=X\otimes X\ ,\quad \text{for }X\in\{K_i, K_i^{-1}\vert \ i\in \tl{\mr{I}}\}\ ,}
\bee{
\Delta(X_i)=X_i\otimes K_i^{1/2}+K_i^{-1/2}\otimes X_i\ ,\quad \text{for }X_i\in \{E_i,F_i\}\ ,\ i\in\tl{\mr{I}}\ ,
}
Here we use the symmetric version of the coproduct which is convenient for our purposes. For that we extend the quantum affine algebra $U_q\tl{\mathfrak{g}}$ by $K_i^{1/2}$, $i\in \tl{\mr{I}}$, in the obvious way. We keep the same notation $U_q\tl{\mathfrak{g}}$ for the extended algebra and hope it does not lead to a confusion.

\begin{thm}[Theorem 6.6.1 in \cite{Y99}]
\label{thm:dynkin isomorphism}
    The quantum affine superalgebras $U_q\tl{\mathfrak{g}}$ defined in Definition \ref{DJ:super} are isomorphic as algebras for any choice of a Dynkin diagram for $\tl{\mathfrak{g}}$.
\qed
\end{thm}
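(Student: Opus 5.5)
The plan is to follow Yamane's strategy and build the isomorphisms from \emph{odd reflections}. Any two Dynkin diagrams of $\tl{\mathfrak{g}}$ (for a fixed affine superalgebra) are connected by a finite chain of odd reflections at isotropic fermionic nodes, together with diagram automorphisms. This is a combinatorial fact about the Weyl groupoid of the underlying affine Lie superalgebra, and I would take it from the classification of Dynkin diagrams. It therefore suffices to treat a single odd reflection at a node $i$ with $s_i=1$ and $\tl C_{ii}=0$. For that reflection I would construct an algebra homomorphism $\phi_i\colon U_q\tl{\mathfrak g}'\to U_q\tl{\mathfrak g}$, where the primed algebra is the one attached to the reflected diagram, and then show that $\phi_i$ is invertible.

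\textbf{Defining $\phi_i$.} On the Cartan part, $\phi_i$ sends $K_j'$ to the product of $K$'s corresponding to the reflected simple root $\A_j'$. Here $\A_i'=-\A_i$, $\A_j'=\A_i+\A_j$ if $\tl B_{ij}\ne0$, and $\A_j'=\A_j$ otherwise. On the remaining generators:
\begin{itemize}
\item $E_i'\mapsto F_iK_i$ and $F_i'\mapsto K_i^{-1}E_i$, up to sign conventions;
\item for $j$ adjacent to $i$, $E_j'\mapsto E_iE_j-(-1)^{s_j}q^{-\tl B_{ij}}E_jE_i$ (a $q$-commutator), and similarly $F_j'\mapsto$ a $q$-commutator of $F_j$ and $F_i$;
\item for $j$ not adjacent to $i$, $E_j'\mapsto E_j$ and $F_j'\mapsto F_j$.
\end{itemize}
The level-zero relation $\prod K_j'^{a_j'}=1$ is preserved, because the null root is fixed by the reflection.

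\textbf{Checking the relations.} The Cartan relations and the $E$--$F$ commutation relations become direct computations. They rely on $E_i^2=F_i^2=0$ and on $q$-commutator identities; the key example is that $[E_i E_j - \cdots,\, F_j F_i - \cdots]$ reduces to a Cartan element, using $\tl B_{ii}=0$. The quadratic relations $E_i'^2=0$ and the commutation of non-adjacent generators follow similarly.

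\textbf{Main obstacle: the higher Serre relations.} These include the degree-four relations at fermionic nodes with two neighbours, and the extra relations that appear for $D_{2|1;\alpha}$, $F_{3|1}$ and $G_{2|1}$. My first approach would avoid checking them directly. I would first define $\phi_i$ on the algebra $\tilde U'$ with only the quadratic relations, and then show that the kernel of $\tilde U'\to U_q\tl{\mathfrak g}'$ is mapped into the kernel of the corresponding map for the unprimed algebra. To do this I would characterize the Serre ideal as the radical of the Drinfeld--Lusztig pairing between the positive and negative halves: an element lies in it iff it is annihilated by all skew-derivations $r_j$. Then I would check that $\phi_i$ intertwines the skew-derivations up to invertible factors. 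If this intrinsic characterization turns out to be unavailable in the affine super setting, the fallback is a case-by-case verification of the finitely many Serre types listed in Yamane's Proposition 6.3.1.

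\textbf{Invertibility.} I would finally construct the reverse homomorphism $\phi_i'$ for the reflection at node $i$ of the primed diagram, which returns the original diagram. The composites $\phi_i'\phi_i$ and $\phi_i\phi_i'$ are then checked on generators. There, $q$-commutators of $F_iK_i$ with the images reduce back to $E_j$, up to nonzero scalars; these scalars can be absorbed by rescaling the generators. Composing along the chain of reflections then gives the theorem.
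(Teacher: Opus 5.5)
The paper gives no argument here; the statement is quoted from Yamane's Theorem 6.6.1. Your skeleton (reduce to single odd reflections, build Lusztig-type maps, invert them) is the standard route to this result. However, the two steps that carry the content do not work as written.

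\textbf{Serre relations.} There is no map $\phi_i\colon\tilde U'\to\tilde U$ between the algebras with only quadratic relations, because quadratic relations of the reflected diagram are higher-degree relations of the original one. Two cases show this.

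If $j$ is an even neighbour of $i$ with $\tl B_{jj}+2\tl B_{ij}=0$, the new node $j$ is odd isotropic. Then $E_j'^2=0$ becomes $(E_iE_j-cE_jE_i)^2=0$, whose expansion $E_iE_jE_iE_j-cE_iE_j^2E_i+c^2E_jE_iE_jE_i$ is nonzero modulo $E_i^2$.

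If $j,k$ are neighbours of $i$ with $\tl B_{ij}+\tl B_{ik}+\tl B_{jk}=0$, they become orthogonal, and their supercommutation becomes a degree-four relation in $E_i,E_j,E_k$. This already happens in the all-fermionic $\tl{\text{D}}_{2|1;\A}$ diagram: reflecting at node $0$, for $j,k=1,2$ the sum is $\A_1+\A_2+\A_3=0$.

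These relations hold in $U$ only as consequences of its higher Serre relations. So they do not ``follow similarly'', and there is no map $\tilde U'\to\tilde U$ along which to compare the two kernels. You can define $\phi_i$ on the algebra with no Serre relations, with values in $U$. But then everything reduces to showing that the full Serre ideal of $U'$ is killed in $U$, and your intrinsic route for that is not available as stated:
\begin{itemize}
\item $\phi_i(E_i')=F_iK_i$, so $\phi_i$ does not map $U'^{+}$ into $U^{+}$ and cannot intertwine the skew-derivations of the two positive parts.
\item The fact that the relations of Yamane's Proposition 6.3.1 generate the radical of the pairing is the main defining-relations theorem of that paper. It is not a characterization you can simply invoke.
\item The case-by-case fallback meets the same problem: each pulled-back relator must be reduced modulo the Serre ideal of $U$.
\end{itemize}

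\textbf{Invertibility.} Applying the same recipe to the primed diagram does not give an inverse up to scalars. Indeed, $\phi_i\phi_i'(E_i)=\phi_i(F_i'K_i')=K_i^{-1}E_iK_i^{-1}=K_i^{-2}E_i$, using $\tl B_{ii}=0$. A Cartan factor like this cannot be removed by rescaling generators; it is the odd analogue of $T_i^2\neq\mathrm{id}$ for Lusztig's operators. The inverse has to be the mirror map $E_i\mapsto F_i'K_i'^{-1}$, $F_i\mapsto K_i'E_i'$, $E_j\mapsto E_j'E_i'-(\cdots)E_i'E_j'$, with the factors in reversed order. This is the same relationship as between Lusztig's $T'_{i,e}$ and $T''_{i,-e}$, and with this choice both composites just rescale the generators.

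Relatedly, the $q$-power in $\phi_i(E_j')$ is tied to the Cartan factor. Since $E_i^2=0$,
$[E_iE_j-cE_jE_i,\,K_i^{-1}E_i]=(q^{\tl B_{ij}}-(-1)^{s_j}c)\,K_i^{-1}E_iE_jE_i$, and $E_iE_jE_i\neq0$. So the choice $F_i'\mapsto K_i^{-1}E_i$ forces $c=(-1)^{s_j}q^{\tl B_{ij}}$. Your exponent $q^{-\tl B_{ij}}$ instead goes with $E_i'\mapsto K_i^{-1}F_i$, $F_i'\mapsto K_iE_i$.
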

Note that the isomorphisms in Theorem \ref{thm:dynkin isomorphism} are not Hopf algebra isomorphisms.

The Hopf subalgebra of $U_q\tl{\mathfrak{g}}$ generated by $K_i^{\pm1/2}$, $E_i$, $F_i$, $i\in\mr{I}$, is isomorphic to the quantum superalgebra $U_q\mathfrak{g}$ of finite type associated to $\mathfrak{g}$.

\subsection{New Drinfeld realization, finite-dimensional representations, and \texorpdfstring{$R$}--matrices}

It is well known that the finite-dimensional modules of the affine quantum groups are best studied using the new Drinfeld realization.
However, while the new Drinfeld realization is not difficult to guess except maybe the Serre relations, the isomorphism of this realization with the algebra $U_q\tl{\mathfrak{g}}$ defined in Section \ref{DJ:super} is not established up to now to the best of our knowledge. 

We write the (conjectured form of) new Drinfeld realization omitting the Serre relations. 

The quantum affine superalgebra $U_q\tl{\mathfrak{g}}$ (at level zero), corresponding to a choice of Dynkin diagram, is the unital associative algebra over $\mathbb{C}$ with generators  $K_i^{\pm}(z)=\sum_{\pm t\geq 0} K_{i,t}^\pm z^{-t}$, $X^{\pm}_i(z)=\sum_{t\in\Z} X^{\pm}_{i,t}z^{-t}$,  $i\in \mr{I}$, modulo relations:
$$[K_i^\pm(z),K_j^\pm(w)]=[K_i^\pm(z),K_j^\mp(w)]=0,\ \qquad K_{i,0}^+ K_{i,0}^-=1\ ,$$
$$(z-q^{\pm B_{ij}}w)\,K_i^\epsilon(z)X_j^\pm(w)=(q^{\pm B_{ij}}z-w)\,X_j^\pm(w)K_i^\epsilon(z)\ \text{ for }\epsilon=\pm\ ,$$
$$X_i^{\pm}(z)X_i^{\pm}(w)+X_i^{\pm}(w)X_i^{\pm}(z)=0,\quad \text{if }s_i=1\ ,$$
$$
X_i^\pm(z)X^\pm_j(w)= (-1)^{s_is_j}\,X_j^\pm(w)X_i^\pm(z), \qquad {\rm{if}}\ i\ne j \text{ and }B_{ij}=0\ ,
$$
$$(z-q^{\pm B_{ij}}w)\,X_i^{\pm}(z)X_j^\pm(w)=(-1)^{s_is_j}(q^{\pm B_{ij}}z-w)\,X_j^\pm(w)X_i^\pm(z)\ ,
$$ 
$$X_{i}^+(z)\,X_{j}^-(w)-(-1)^{s_is_j}X_j^-(w)\,X_i^+(z)=\D_{ij}\,\D\bigg(\frac{z}{w}\bigg)\,\frac{K_{i}^+(z)-K_{i}^-(z)}{q_i-q_i^{-1}}\ ,$$ 
where $\D(z)=\sum_{t\in\Z}z^t\,\in\C[\![z,z^{-1}]\!]\ .$
 
A vector $v$ in a $U_q\tl{\mathfrak{g}}$-module $V$ is called $\ell$-weighted if it is an eigenvector of $K_i^{\pm}(z)$, $i\in \mr{I}$. The collection of eigenvalues of $K_i^{+}(z)$ on $v$ is called the $\ell$-weight of $v$. In finite-dimensional modules all $\ell$-weights are $\mr{I}$-tuples of rational functions. We consider such tuples as a group with component-wise multiplication. The $q$-character of $V$ is a formal sum of the $\ell$-weights in $V$ with integer coefficients given by the dimensions of the corresponding $\ell$-weight spaces.

For $i\in \mr{I}, w\in \C^\times$, the $i$-th affine root is an $\mr{I}$-tuple of rational functions
\begin{align}\label{affine roots}
A_{i,w}(z)=\Big(\frac{q^{B_{ij}}z-w}{z-q^{B_{ij}}w}\Big)_{j\in \mr{I}}.
\end{align}

An $U_q\tl{\mathfrak{g}}$-module is called thin if multiplicity of each $\ell$-weight is 1.  If $V$ is thin then it has a basis of $\ell$-weight vectors. In this basis, if the matrix coefficient of $X_i^\pm(z)$ between two vectors is non-zero, then there exists $w\in\C^\times$, such that the ratio of the corresponding $\ell$-weights is the affine root $A_{i,w}^{\pm1}$. Moreover, this matrix coefficient is $c\delta(z/w)$, where $c$ is a constant. 

For the definition of the new Drinfeld realization and progress in types $\mathfrak{sl}(m\vert n)$ 
and $\mathfrak{osp}(m|n)$, see \cite{BFK24},  \cite{WLZ25}. For the definition of the new Drinfeld realization and a surjective map to $U_q\tl{\mathfrak{g}}$ in type D$_{2,1;\alpha}$, see \cite{HSTY08}.

The classification of finite-dimensional irreducible representations of $U_q\tl{\mathfrak{g}}$ is also not known in general. It is known in the case of $\mathfrak{g}=\mathfrak{sl}_{m\vert n}\ (m\ne n)$ and distinguished parity, see \cite{Zh96}\footnote{Th paper \cite{Zh96} is written for the Yangian case, the extension to quantum affine case is straightforward.}. 

\medskip

We prepare a few generalities. First, every $U_q\tl{\mathfrak{g}}$-module can be shifted by a non-zero complex number.

\begin{prp}
    For $z\in\mathbb{C}^\times$, there is a Hopf algebra automorphism $\tau_z$ of $U_q\tl{\mathfrak{g}}$ defined by: 
\bee{
\tau_z(E_0)=z\,E_0\ ,\quad \tau_z(F_0)=z^{-1}\,F_0\ ,\quad \tau_z(E_i)=E_i\ ,\quad \tau_z(F_i)=F_i\ , \quad \tau_z(K_j^{\pm1})=K_j^{\pm1},\quad i\in{\mr{I}}\ ,j\in\tl{\mr{I}}\ .
}
\p{
It is straightforward to check that $\tau_z$ is an isomorphism of Hopf algebras.
}
\end{prp}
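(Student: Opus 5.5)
To prove that $\tau_z$ is a Hopf algebra automorphism, I will define it on the generators of Definition \ref{DJ:super}, show that it preserves every defining relation, and then check invertibility and compatibility with the coproduct, counit and antipode.

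\emph{Well-definedness.} Since $U_q\tl{\mathfrak{g}}$ is a quotient of the free algebra on $K_i^{\pm1/2},E_i,F_i$, it suffices to check that the assignment respects all relations. The key observation is that $\tau_z$ multiplies each generator by a scalar: $E_0\mapsto zE_0$, $F_0\mapsto z^{-1}F_0$, and every other generator is fixed. So I assign degree $+1$ to $E_0$, degree $-1$ to $F_0$, and degree $0$ to all other generators, and check that each defining relation is homogeneous for this $\Z$-grading.
\begin{itemize}
\item The relations among the $K$'s, the relations $K_iE_jK_i^{-1}=q^{\tl B_{ij}}E_j$ and their $F$-analogues, and $E_i^2=F_i^2=0$ are clearly homogeneous.
\item The relation $E_iF_j-(-1)^{s_is_j}F_jE_i=\D_{ij}(K_i-K_i^{-1})/(q_i-q_i^{-1})$ has degree $0$ in every case. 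Indeed, the left side for $i=j=0$ picks up the factor $z\cdot z^{-1}=1$, and for $i\neq j$ both sides are homogeneous.
\item The commutation relations $E_iE_j-(-1)^{s_is_j}E_jE_i=0$ and all higher Serre relations are polynomials in the $E$'s alone (respectively the $F$'s alone). In each monomial of such a relation, every $E_i$ occurs the same number of times. Hence each relation is homogeneous of degree equal to the number of occurrences of $E_0$ (respectively minus that for $F_0$).
\end{itemize}
So $\tau_z$ sends each relation to a scalar multiple of itself, and therefore descends to an algebra endomorphism. This homogeneity of the higher Serre relations is the only point needing care, since they are not written out explicitly here. They are, however, the $q$-analogues of ad-nilpotency relations in $E_i,E_j$ (resp.\ $F_i,F_j$), as in Proposition 6.3.1 of \cite{Y99}, and so they are multihomogeneous in the root lattice grading. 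That multihomogeneity is exactly what the argument uses.

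\emph{Invertibility and Hopf structure.} The map $\tau_{z^{-1}}$ is an inverse, since $\tau_z\tau_{z^{-1}}$ and $\tau_{z^{-1}}\tau_z$ both fix all generators. For the coproduct, I check $\Delta\circ\tau_z=(\tau_z\otimes\tau_z)\circ\Delta$ on generators:
\[
\Delta(\tau_z E_0)=z\left(E_0\otimes K_0^{1/2}+K_0^{-1/2}\otimes E_0\right)=(\tau_z\otimes\tau_z)\Delta(E_0),
\]
because the $K$'s are fixed. The same computation works for $F_0$, and the identity is trivial for all other generators. Both sides of this identity are algebra homomorphisms, so agreement on generators gives agreement everywhere. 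The counit is compatible because $\varepsilon(E_0)=\varepsilon(F_0)=0$. The antipode satisfies $S(E_0)=-K_0^{1/2}E_0K_0^{-1/2}$ (up to the sign conventions of the symmetric coproduct), which is again linear in $E_0$ with $K$-coefficients, so $S\circ\tau_z=\tau_z\circ S$ holds on generators and hence everywhere, $S$ being an anti-homomorphism.
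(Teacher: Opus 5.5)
Your proposal is correct and is the routine verification the paper has in mind when it says the check is straightforward. You show that every defining relation, including the unlisted Serre relations via their root-lattice homogeneity, is homogeneous for the grading $\deg E_0=1$, $\deg F_0=-1$; that $\tau_{z^{-1}}$ is an inverse; and that $\Delta$, $\varepsilon$, $S$ are preserved because the $K_j^{\pm1/2}$ are fixed. One small simplification: compatibility with the antipode is automatic for any bialgebra automorphism, by uniqueness of the antipode, so that last check can be skipped.
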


For the new Drinfeld realization, $\tau_z(K_i^\pm(w))=K_i^\pm (w/z),$ $\tau_z(X_i^\pm(w))=X_i^\pm(w/z).$

Given a $U_q\tl{\mathfrak{g}}$-module $W$ and $z\in\mathbb{C}^\times$, we denote by $W(z)$ the pull-back of $W$ by $\tau_z$. Clearly, $W(z)$ is irreducible if and only if $W$ is irreducible.

\medskip 

Second, every $U_q\tl{\mathfrak{g}}$-module $W$ is clearly a $U_q{\mathfrak{g}}$-module  which is called the restriction of $W$.

Let $W$ be a $U_q\mathfrak{g}$-module.
Given $\lambda=(\lambda_1,\dots,\lambda_r)$, $\la_i\in\C$, define the subspace $W_\lambda\subset W$ of weight $\lambda$ to be
$$W_\lambda=\{w\in W:K_iw=q_i^{\lambda_i}w, \ i\in\mr{I}\}.$$
If $W_\lambda\ne0$, $\lambda$ is called a weight of $W$. A nonzero vector $w\in W_\lambda$ is called a vector of weight $\lambda$.

A vector $w\in W_\lambda$ of weight $\lambda$ is called a highest weight vector, or a singular vector, if $E_i\,w=0$, $i\in\mr{I}$.

Let $L_\la$ denote the irreducible representation of $U_q\mathfrak{g}$ with highest weight $\la$.

Each irreducible highest weight module of a
basic Lie superalgebra can be deformed to an irreducible highest weight module over the
corresponding Drinfeld-Jimbo algebra, see \cite{G07}. 

\medskip

Third, we discuss the $R$-matrices.

The quasi-triangular structure in the supersymmetric case is studied for all finite types in \cite{Y94}. We are not aware of any results in the affine cases.


Let $W$ be an irreducible $U_q\tl{\mathfrak{g}}$-module. Then it is easy to see that for generic $z$, the module $W(z)\otimes W$ is irreducible. Assuming the existence of new Drinfeld realization, this module is isomorphic to $W\otimes W(z)$. 
Let $\check{R}(z)$ be the intertwiner $$\check{R}(z):W(z)\otimes W\to W\otimes W(z).$$
Then $\check{R}(z)$ is unique up to a constant. One could fix the constant by choosing an appropriate highest weight vector $w\in W$ and demanding that $\check{R}(z) (w\otimes w)=w\otimes w$. Then $\check{R}(z)$ can be extended to all values of $z$ as a rational function of $z$. One expects that matrix $\check{R}(z)$ is obtained from the universal $R$-matrix by evaluating it at $W(z)\otimes W$ and multiplying the result by the permutation operator.

Such $\check{R}(z)$ satisfies the Quantum Yang-Baxter equation (QYBE). 

In this paper, we give expressions of $\check{R}(z)$, when $V$ is the non-trivial irreducible $U_q\tl{\mathfrak{g}}$-module of smallest dimension for all basic simple affine superalgebras, that is when $\tl{\mathfrak{g}}$ is of types $\wtl{\mathfrak{sl}}_{m\vert n}$, $\wtl{\mathfrak{osp}}_{m\vert 2n}$, $\tl{\text{D}}_{2\vert 1;\alpha}$, $\tl{\text{F}}_{3\vert 1}$, and $\tl{\text{G}}_{2\vert 1}$. In the last three cases (the exceptional types), these expressions are new to the best of our knowledge.

The appropriate limit $q\to 1$ is expected to give the rational $R$-matrix which is an intertwiner of the corresponding Yangian modules. We do not describe Yangians but simply describe the limit and give the rational $R$-matrices.

\section{Type \texorpdfstring{$\mathfrak{sl}_{m\vert n}\ (m>n\ge 1)$}{2}}
\label{super:sl}

In this section, we write the already known formula of $\check{R}(z)$ for the first fundamental representations of $U_q\wtl{\fk{sl}}_{m\vert n}$. In the matrix unit form, this $R$-matrix is called the Perk-Schultz matrix.

We consider the distinguished Dynkin diagram given by
\begin{center}
    \begin{tikzpicture}[root/.style={circle, draw, thick, minimum size=6pt, inner sep=0pt}]
            \node[root] at (0,0) {};
            \node[root] at (2,0) {};
            \node at (4,0) {$\otimes$};
            \node[root] at (6,0) {};
            \node[root] at (8,0) {};
            \node at (4,2) {$\otimes$};
            \node at (0,-0.3) {$1$};
            \node at (2,-0.3) {$m-1$};
            \node at (4,-0.3) {$m$};
            \node at (6,-0.3) {$m+1$};
            \node at (8,-0.3) {$m+n-1$};
            \node at (4,1.6) {$0$};
            \draw[dotted, thick] (0.1,0) -- (1.9,0);
            \draw (2.1,0) -- (3.85,0);
            \draw (4.15,0) -- (5.9,0);
            \draw[dotted, thick] (6.1,0) -- (7.9,0);
            \draw (0.07,0.1) -- (3.9,2);
            \draw (4.12,2) -- (7.93,0.1);

            \node[root] at (10,0) {};
            \node[root] at (12,0) {};
            \node at (14,0) {$\otimes$};
            \node at (12,2) {$\otimes$};
            \node at (10,-0.3) {$1$};
            \node at (12,-0.3) {$m-1$};
            \node at (14,-0.3) {$m$};
            \node at (12,1.6) {$0$};
            \draw[dotted, thick] (10.1,0) -- (11.9,0);
            \draw (12.1,0) -- (13.85,0);
            \draw (10.07,0.1) -- (11.9,2);
            \draw (12.12,2) -- (13.93,0.1);

            \node at (4,-1) {$(n>1)$};
            \node at (12,-1) {$(n=1)$};
            \node at (9,-1) {,};
            \node at (15,-1) {.};
        \end{tikzpicture}
\end{center}

Let $\tl{L}_1(z)$ be the first fundamental representation of $U_q\wtl{\fk{sl}}_{m\vert n}$. When restricted to $U_q\fk{sl}_{m\vert n}$, we have $\tl{L}_1(z)\cong L_{\omega_1}$, where $\omega_1$ is the first fundamental weight. In general, let $\omega_i$ be the $i$-th fundamental weight, $1\le i\le m+n-1$. As $U_q\fk{sl}_{m\vert n}$-modules we have 
\eq{\label{super:tensorA}
\underbracket[0.1ex]{L_{\omega_1}}_{m+n}\otimes \underbracket[0.1ex]{L_{\omega_1}}_{m+n}\cong \underbracket[0.1ex]{L_{2\omega_1}}_{\binom{m+n+1}{2}}\oplus \underbracket[0.1ex]{L_{\omega_2}}_{\binom{m+n}{2}}\ .
}
Here and below, we use square under-brackets to indicate the dimensions of the modules. 

One can choose a basis $\{v_i:1\le i\le m+n\}$ for $L_{\omega_1}$ in the standard way, so that $v_1$ is an even non-zero highest weight vector and $F_iv_i=v_{i+1}$, $1\leq i\leq m+n-1$. Then, $v_1\otimes v_1$ is a singular vector of weight $2\omega_1$, and $q\,v_1\otimes v_2-v_2\otimes v_1$ is a singular vector of weight $\omega_2$. We generate respectively the modules $L_{2\omega_1}$ and $L_{\omega_2}$ using these singular vectors. Note that in the case when $m=1$, $v_2$ is an odd vector. 

For $\lambda=2\omega_1,\omega_2$, let $P_\lambda^q$ be the projector onto the $U_q\mathfrak{sl}_{m\vert n}$-module $L_\lambda$ in decomposition \eqref{super:tensorA}, and let $E_{ij}$ be matrix units corresponding to the basis of $v_i$, that is, $E_{ij}(v_k)=\D_{jk}v_i$.
For $1\le i\le m+n$, let
$$p_i=\begin{cases}
    0 & \text{if }i\le m\\
    1 & \text{if }i>m
\end{cases}\ ,\quad\quad q_i=\begin{cases}
    q & \text{if }i\le m\\
    q^{-1} & \text{if }i>m
\end{cases}\ .$$

\begin{thm}[\cite{PS81}, \cite{GZ03}]
In terms of projectors, we have
\eq{\label{super:R proj A}
\check{R}(z)=P_{2\omega_1}^q-q^{-2}\frac{1-q^{2}z}{1-q^{-2}z}P_{\omega_2}^q\ .
}
In terms of matrix units, we have
\eq{\label{super:RqA}
\check{R}(z)=\ & \sum_{i=1}^{r+1}\,\frac{q_i-q_i^{-1}z}{q-q^{-1}z}\,(-1)^{p_i}\, E_{ii}\otimes E_{ii}\,+\,\sum_{i<j}\,\frac{z(q_i-q_i^{-1})}{q-q^{-1}z}\,(-1)^{p_ip_j}\,E_{ii}\otimes E_{jj} \\
& +\,\sum_{i>j}\,\frac{q_i-q_i^{-1}}{q-q^{-1}z}\,(-1)^{p_ip_j}\,E_{ii}\otimes E_{jj} +\,\frac{1-z}{q-q^{-1}z}\,\sum_{i\ne j}(-1)^{p_ip_j}\,E_{ij}\otimes E_{ji}\ .
}
\p{There are many ways to prove the
projector formula in \eqref{super:R proj A}.  For example, one could use the method of \cite{DM25a} and the $q$-characters. Alternatively,
this is multiplicity free case, and one could use the tensor graph method in \cite{GZ03}. 

The matrix unit formula in \eqref{super:RqA} is the quantized version of the Perk-Schulz matrix in \cite{PS81}.
}
\end{thm}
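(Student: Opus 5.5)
The plan is to prove the projector formula \eqref{super:R proj A} first and then deduce the matrix unit formula \eqref{super:RqA} from it.

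\textbf{Step 1: reduction to two scalars.} Since $V(z)\otimes V\cong L_{2\omega_1}\oplus L_{\omega_2}$ is multiplicity free as a $U_q\mathfrak{sl}_{m\vert n}$-module and $\check{R}(z)$ commutes with $U_q\mathfrak{sl}_{m\vert n}$, Schur's lemma forces
\[
\check{R}(z)=f_1(z)P^q_{2\omega_1}+f_2(z)P^q_{\omega_2}.
\]
The normalization $\check{R}(z)(v_1\otimes v_1)=v_1\otimes v_1$ gives $f_1=1$, so only $f_2(z)$ remains to be found.

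\textbf{Step 2: the affine intertwining condition.} I will impose the relation for $E_0$, namely
\[
\check{R}(z)\,\Delta^{V(z)\otimes V}(E_0)=\Delta^{V\otimes V(z)}(E_0)\,\check{R}(z).
\]
The generator $E_0$ acts on $L_{\omega_1}$ as a multiple of $E_{1,m+n}$, with a sign and a power of $q$ fixed by the Cartan relations. I will test the relation on a low weight vector, say $v_{m+n}\otimes v_2$ or $v_{m+n}\otimes v_1$. Applying $E_0$ moves it into the weight spaces spanned by $\{v_1\otimes v_2,\,v_2\otimes v_1\}$, which meets both components.

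On the other side, I need the decomposition of $v_{m+n}\otimes v_2$ into its $L_{2\omega_1}$ and $L_{\omega_2}$ parts. These follow from the explicit singular vectors $v_1\otimes v_1$ and $q\,v_1\otimes v_2-v_2\otimes v_1$ together with their images under $\Delta(F_i)$. The relation then becomes a single linear equation in $f_2$, giving
\[
f_2=-q^{-2}\frac{1-q^2z}{1-q^{-2}z}.
\]

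\textbf{Step 3: the matrix unit formula.} For \eqref{super:RqA} I will write the projectors explicitly on each two-dimensional weight space $\mathrm{span}\{v_i\otimes v_j,\,v_j\otimes v_i\}$ and on the lines $v_i\otimes v_i$. On $v_i\otimes v_i$ with $i>m$, the vector lies in $L_{\omega_2}$ because of super-antisymmetry, which produces the $q_i=q^{-1}$ entries. Substituting $f_2$ and simplifying yields the coefficients. As an independent check, one can verify directly that \eqref{super:RqA} commutes with all $\Delta(E_i)$ and $\Delta(F_i)$, $i\in\tl{\mr{I}}$, each of which acts by matrix units.

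\textbf{Main obstacle.} The hardest part will be sign bookkeeping: the super sign rule $(A\otimes B)(v\otimes w)=(-1)^{p_Bp_v}Av\otimes Bw$, the factors $(-1)^{s_is_j}$, and the sign of $E_0$ on the odd vector $v_{m+n}$. A single sign error flips $f_2$ into the bosonic-type answer. I would fix the conventions first by checking the case $m=2$, $n=1$ by hand.
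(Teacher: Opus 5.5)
Your route is correct in outline, but it is not the route the paper points to for this theorem. The paper gives no computation here. It cites the tensor product graph method of \cite{GZ03}, which applies because $L_{\omega_1}^{\otimes 2}$ is multiplicity free and reads the eigenvalue ratios off Casimir values of adjacent summands. As an alternative it cites the $q$-character method of \cite{DM25a}, and it takes the matrix-unit form from \cite{PS81}.

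Your argument is instead the direct computation the paper itself uses for D$_{2\vert 1;\A}$, F$_{3\vert 1}$, G$_{2\vert 1}$ (intertwining with $\Delta(F_0)$). Its steps are:
\begin{itemize}
\item Schur's lemma on $L_{2\omega_1}\oplus L_{\omega_2}$;
\item normalization on $v_1\otimes v_1$;
\item one scalar equation from the affine node;
\item reading off matrix units weight space by weight space.
\end{itemize}
It costs explicit formulas for the affine generators on $V$ and for the projectors on one weight space. In exchange it needs neither the tensor product graph theorem nor the super analogue of Theorem 6.7 of \cite{FM01}, which the paper says is not yet proved.

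Keep in mind that Step 2 only gives a necessary condition. In this setting $V(z)\otimes V\cong V\otimes V(z)$ is not available a priori. So checking that $P^q_{2\omega_1}+f_2P^q_{\omega_2}$ intertwines $\Delta(E_0)$ and $\Delta(F_0)$ on the whole space is part of the proof, not an optional check. The generators with $i\in\mr{I}$ are automatic.

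Two things need fixing.

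\textbf{The affine generators are swapped.} $E_0$ has weight $\delta-\theta$ with $\theta=\epsilon_1-\delta_n$. So on $V(z)$ it is a multiple of $z\,E_{m+n,1}$: it sends $v_1$ to $v_{m+n}$, just as $E_0$ lowers $v_1$ in \eqref{D18 E0 action}. It is $F_0$ that is a multiple of $z^{-1}E_{1,m+n}$. With the actual $E_0$, your test on $v_{m+n}\otimes v_2$ is vacuous, since both sides are zero. Either use $F_0$ with your test vectors, or use $E_0$ on $v_{m+n}\otimes v_1$, whose image is a multiple of $v_{m+n}\otimes v_{m+n}\in L_{\omega_2}$. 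Each gives a nondegenerate linear equation for $f_2$.

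\textbf{Step 3 will not reproduce \eqref{super:RqA} as printed,} so your direct check of the printed formula would fail. Since $f_2(1)=1$, \eqref{super:R proj A} gives $\check{R}(1)=\id$. But for $j\le m<i$ the third sum of \eqref{super:RqA} gives $\check{R}(1)(v_i\otimes v_j)=\frac{q^{-1}-q}{q-q^{-1}}\,v_i\otimes v_j=-v_i\otimes v_j$. Equivalently, the trace on $\mathrm{span}\{v_i\otimes v_j,v_j\otimes v_i\}$ differs from $1+f_2(z)$. Your computation will force the coefficient $\frac{q-q^{-1}}{q-q^{-1}z}$ there, i.e.\ $q_j$ in place of $q_i$ in the third sum. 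After that correction every $E_{ii}\otimes E_{jj}$ with $i\neq j$ has coefficient $\frac{(q-q^{-1})z}{q-q^{-1}z}$ if $i<j$ and $\frac{q-q^{-1}}{q-q^{-1}z}$ if $i>j$.

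There is also a sign convention to settle. The factor $(-1)^{p_ip_j}$ in front of $E_{ij}\otimes E_{ji}$ gives the graded flip only if $E_{ij}\otimes E_{ji}$ is read as a plain Kronecker product. Under the sign rule fixed in Section \ref{super:QAA}, the graded flip is $\sum_{i,j}(-1)^{p_j}E_{ij}\otimes E_{ji}$. Fix this convention first, since it is exactly the sign bookkeeping you identified as the main risk.
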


Let $P_\lambda=\lim_{q\to 1}P_{\lambda}^q\,\,$ be the $U\fk{sl}_{m|n}$ projector, let $I$ be the identity operator, and let $P$ be the graded flip operator given by 
$$P=\sum_{i,j=1}^{m+n}\,(-1)^{p_ip_j}\,E_{ij}\otimes E_{ji}\,=P_{2\omega_1}-P_{\omega_2}\ .$$

\begin{cor}[\cite{PS81}]
In the rational case, the corresponding rational $R$-matrix is given by 
\eq{\label{super:RuA}
\check{R}(u)=P_{2\omega_1}+\frac{1+u}{1-u}P_{\omega_2}=\frac{1}{1-u}(I-uP)\ .
}
\end{cor}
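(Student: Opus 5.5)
The corollary is a $q\to1$ limit of the preceding theorem. The plan is to substitute $z=q^{2u}$ (or $z=e^{2hu}$ with $q=e^{h}$) into both forms of $\check R(z)$, let $q\to1$, and then rewrite the result using the graded flip $P$.

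First, I would take the limit of the projector formula \eqref{super:R proj A}. The coefficient of $P_{\omega_2}^q$ is
$$-q^{-2}\frac{1-q^{2}z}{1-q^{-2}z}.$$
Putting $z=q^{2u}$, this becomes $-q^{-2}\frac{1-q^{2+2u}}{1-q^{2u-2}}$. By l'H\^opital in $h$ it tends to
$$-\frac{-(2+2u)}{-(2u-2)}=-\frac{1+u}{u-1}=\frac{1+u}{1-u}.$$
The projectors $P^q_\lambda$ tend to $P_\lambda$ by definition. This gives the first equality in \eqref{super:RuA}.

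The only point that needs care is the sign and normalization convention for the spectral parameter. I would fix it so that $\check R$ is normalized by $\check R(v_1\otimes v_1)=v_1\otimes v_1$ and the coefficient comes out exactly $\frac{1+u}{1-u}$. If one instead uses $z=q^{-2u}$, the coefficient becomes $\frac{1-u}{1+u}$, so the substitution must be chosen to match the stated formula.

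Second, I would use the identities $I=P_{2\omega_1}+P_{\omega_2}$ and $P=P_{2\omega_1}-P_{\omega_2}$. For the second, I would verify that $P$ acts by $+1$ on $L_{2\omega_1}$ and by $-1$ on $L_{\omega_2}$ in the classical limit. This follows because $P$ commutes with the $U\mathfrak{sl}_{m|n}$ action, which gives a scalar on each irreducible summand. The scalars are read off on the singular vectors: $P(v_1\otimes v_1)=v_1\otimes v_1$, and $P(v_1\otimes v_2-v_2\otimes v_1)=-(v_1\otimes v_2-v_2\otimes v_1)$, using that $v_1$ is even.

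Then
$$\frac{1}{1-u}(I-uP)=\frac{1-u}{1-u}P_{2\omega_1}+\frac{1+u}{1-u}P_{\omega_2},$$
which is the second equality. As a consistency check, the matrix-unit form \eqref{super:RqA} gives the same answer in the limit: the diagonal terms tend to $\frac{1}{1-u}$ times the identity pieces, and the off-diagonal terms tend to $\frac{-u}{1-u}(-1)^{p_ip_j}E_{ij}\otimes E_{ji}$.
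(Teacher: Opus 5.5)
Your proposal is correct and follows the paper's argument. Like the paper, you substitute $z=q^{2u}$ into the projector formula and let $q\to1$; for the second equality you use $I=P_{2\omega_1}+P_{\omega_2}$ and $P=P_{2\omega_1}-P_{\omega_2}$, which the paper records when defining $P$. Your side remark on the matrix-unit form is loosely stated: the $E_{ii}\otimes E_{ii}$ terms tend to $\frac{1-(-1)^{p_i}u}{1-u}E_{ii}\otimes E_{ii}$, not just $\frac{1}{1-u}$ times the identity piece, because they absorb the diagonal part of $-uP/(1-u)$. Nothing in your proof rests on that check.
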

\begin{proof}
We substitute $z=q^{2u}$ in \eqref{super:R proj A} and \eqref{super:RqA} and take the limit $q\to 1$.
\end{proof}

\section{Type \texorpdfstring{$\mathfrak{osp}_{m\vert 2n}\ (m\geq 3, n\geq 1)$}{2}}
\label{super:osp}

In this section, we write the already known formula of $\check{R}(z)$ for the first fundamental representation of $U_q\wtl{\fk{osp}}_{m\vert 2n}$.

We consider the distinguished Dynkin diagram given by 

\begin{center}
    \begin{tikzpicture}[root/.style={circle, draw, thick, minimum size=6pt, inner sep=0pt}]
            \node[root] at (-2,0) {};
            \node[root] at (0,0) {};
            \node[root] at (2,0) {};
            \node at (4,0) {$\otimes$};
            \node[root] at (6,0) {};
            \node[root] at (8,0) {};
            \node[root] at (10,0) {};
            \node at (-2,-0.3) {$0$};
            \node at (0,-0.3) {$1$};
            \node at (2,-0.3) {$n-1$};
            \node at (4,-0.3) {$n$};
            \node at (6,-0.3) {$n+1$};
            \node at (8,-0.3) {$n+r-1$};
            \node at (10,-0.3) {$n+r$};
            \draw (0.1,0) -- (0.5,0);
            \draw (1.5,0) -- (1.9,0);
            \draw[dotted, thick] (0.5,0) -- (1.5,0);
            \draw (2.1,0) -- (3.85,0);
            \draw (4.15,0) -- (5.9,0);
            \draw (6.1,0) -- (6.5,0);
            \draw (7.5,0) -- (7.9,0);
            \draw[dotted, thick] (6.5,0) -- (7.5,0);
            \draw (8.1,0.05) -- (9.9,0.05);
            \draw (8.1,-0.05) -- (9.9,-0.05);
            \draw (-1.9,0.05) -- (-0.1,0.05);
            \draw (-1.9,-0.05) -- (-0.1,-0.05);
            \node at (9,0) {\scalebox{1.1}{$>$}};
            \node at (-1,0) {\scalebox{1.1}{$>$}};
        \end{tikzpicture}\ ,
\end{center}
when $m=2r+1$ is odd, and
\begin{center}
    \begin{tikzpicture}[root/.style={circle, draw, thick, minimum size=6pt, inner sep=0pt}]
            \node[root] at (-2,0) {};
            \node[root] at (0,0) {};
            \node[root] at (2,0) {};
            \node at (4,0) {$\otimes$};
            \node[root] at (6,0) {};
            \node[root] at (8,0) {};
            \node[root] at (10,1) {};
            \node[root] at (10,-1) {};
            \node at (-2,-0.3) {$0$};
            \node at (0,-0.3) {$1$};
            \node at (2,-0.3) {$n-1$};
            \node at (4,-0.3) {$n$};
            \node at (6,-0.3) {$n+1$};
            \node at (8,-0.3) {$n+r-2$};
            \node at (10,0.7) {$n+r-1$};
            \node at (10,-1.3) {$n+r$};
            \draw (0.1,0) -- (0.5,0);
            \draw (1.5,0) -- (1.9,0);
            \draw[dotted, thick] (0.5,0) -- (1.5,0);
            \draw (2.1,0) -- (3.85,0);
            \draw (4.15,0) -- (5.9,0);
            \draw (6.1,0) -- (6.5,0);
            \draw (7.5,0) -- (7.9,0);
            \draw[dotted, thick] (6.5,0) -- (7.5,0);
            \draw (8.1,0.05) -- (9.9,0.95);
            \draw (8.1,-0.05) -- (9.9,-0.95);
            \draw (-1.9,0.05) -- (-0.1,0.05);
            \draw (-1.9,-0.05) -- (-0.1,-0.05);
            \node at (-1,0) {\scalebox{1.1}{$>$}};
        \end{tikzpicture}\ .
\end{center}
when $m=2r$ is even.

Let $\tl{L}_1(z)$ be the first fundamental representation of $U_q\wtl{\fk{osp}}_{m\vert 2n}$. Restricting to $U_q\fk{osp}_{m\vert 2n}$, we have $\tl{L}_1(z)\cong L_{\omega_1}$, where $\omega_1$ is the first fundamental weight. In general, let $\omega_i$ be the $i$-th fundamental weight, $1\le i\le n+r$. As $U_q\fk{osp}_{m\vert 2n}$-modules we have 
\eq{\label{super:tensorB}
\underbracket[0.1ex]{L_{\omega_1}}_{m+2n}\otimes \underbracket[0.1ex]{L_{\omega_1}}_{m+2n}\cong \underbracket[0.1ex]{L_{2\omega_1}}_{\binom{m+2n+1}{2}}\oplus \underbracket[0.1ex]{L_{\omega_2}}_{\binom{m+2n}{2}-1}\oplus \underbracket[0.1ex]{L_{\omega_0}}_{1}\ ,\quad \text{if }m\ne 2n\ .
}
When $m=2n$, the last two summands combine to form an indecomposable summand $W$.

For $\lambda=2\omega_1,\omega_2,\omega_0$, when $m\ne 2n$, let $P_\lambda^q$ be the projector onto the $U_q\fk{osp}_{m\vert 2n}$-module $L_\lambda$ in the decomposition \eqref{super:tensorB}. When $m=2n$, let $P_W^q$ be the projector onto the indecomposable summand $W$.

\begin{thm}[\cite{MDGL05}]
In terms of projectors, we have
\eq{\label{super:R proj B}
\check{R}(z)=\begin{cases}
    P_{2\omega_1}^q-q^{-2}\dfrac{1-q^{2}z}{1-q^{-2}z}P_{\omega_2}^q-q^{m-2n-2}\dfrac{1-q^{2n-m+2}z}{1-q^{m-2n-2}z}\,P_{\omega_0}^q & \text{if }m\ne 2n\ , 
    \vspace{10pt}\\ 
    P_{2\omega_1}^q-q^{-2}\dfrac{1-q^{2}z}{1-q^{-2}z}P_{W}^q & \text{if }m=2n\ .
    \end{cases}
}
\p{
The projector formulas  \eqref{super:R proj B} can be computed using the method of \cite{DM25a} or the tensor graph method in \cite{GZ03}. These formulas are given in \cite{MDGL05}.
}
\end{thm}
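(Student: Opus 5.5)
The plan is to determine $\check{R}(z)$ as the unique, up to scalar, $U_q\wtl{\fk{osp}}_{m\vert 2n}$-intertwiner $\tl{L}_1(z)\otimes\tl{L}_1\to\tl{L}_1\otimes\tl{L}_1(z)$, normalized by $\check{R}(z)(v_1\otimes v_1)=v_1\otimes v_1$. Since $\tl{L}_1$ restricts to $L_{\omega_1}$ and $\tau_z$ fixes $U_q\fk{osp}_{m\vert 2n}$, the map $\check{R}(z)$ commutes with $U_q\fk{osp}_{m\vert 2n}$. For $m\neq 2n$ the decomposition \eqref{super:tensorB} is multiplicity free, so Schur's lemma forces
\[
\check{R}(z)=P^q_{2\omega_1}+f_2(z)P^q_{\omega_2}+f_0(z)P^q_{\omega_0}.
\]
The coefficient of $P^q_{2\omega_1}$ is $1$ by the normalization, because $v_1\otimes v_1$ is the singular vector of $L_{2\omega_1}$. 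For $m=2n$ the summand $W$ is indecomposable with simple socle, so an endomorphism of $W$ is a scalar plus a nilpotent part. I will first argue, by comparing with the generic case, that the nilpotent part vanishes, and therefore $\check{R}(z)=P^q_{2\omega_1}+f(z)P^q_W$.

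To find $f_2$ and $f_0$, I impose commutation with the affine generators $E_0,F_0$. On $\tl{L}_1(z)\otimes\tl{L}_1$ we have $\Delta(E_0)=zE_0\otimes K_0^{1/2}+K_0^{-1/2}\otimes E_0$, and the twist sits on the other factor for the target. Here $E_0$ acts on $\tl{L}_1$ as a lowest-to-highest type operator; concretely, $E_0$ maps $v_{m+2n}$ toward $v_1$, exactly as in the even $B/D$ case. Take $u$ a singular vector of $L_{\omega_2}$, namely $q\,v_1\otimes v_2-v_2\otimes v_1$ up to parity signs. Applying $\check{R}(z)\Delta_z(X_0)=\Delta_{z}^{op}(X_0)\check{R}(z)$ to suitable vectors yields a linear relation between $1$ and $f_2$. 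The vectors to use are those for which $E_0$ or $F_0$ moves a component between $L_{2\omega_1}$ and $L_{\omega_2}$, for instance $v_1\otimes v_{m+2n-1}$-type vectors. This is the standard tensor-product-graph step of \cite{GZ03}: whenever two summands are adjacent in the graph, with opposite parity under the flip, the ratio of their eigenvalues is $\frac{1-q^{2}z}{1-q^{-2}z}$ times a power of $q$ fixed by the Casimir values. Solving gives $f_2(z)=-q^{-2}\frac{1-q^2z}{1-q^{-2}z}$. The same argument on the edge $L_{\omega_2}$ to $L_{\omega_0}$ gives $f_0/f_2$. The relevant Casimir difference is governed by the superdimension $m-2n$, which produces $q^{m-2n-2}$ and the factor $\frac{1-q^{2n-m+2}z}{1-q^{m-2n-2}z}$.

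Alternatively, following \cite{DM25a}, the poles can be located via $q$-characters. The $\ell$-weights of $\tl{L}_1(z)\otimes\tl{L}_1$ collide exactly at $z=q^{\pm2}$ and $z=q^{\pm(m-2n-2)}$, which identifies the zeros and poles of $f_2$ and $f_0$. The constant is then fixed by evaluating a single matrix coefficient.

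The main obstacle is the case $m=2n$. There $P^q_{\omega_2}$ and $P^q_{\omega_0}$ are not defined separately, and one must exclude a possible nilpotent endomorphism component on $W$. I would handle this by taking the limit $m-2n\to0$ formally in the generic formula. When $m=2n$ the exponent $m-2n-2$ equals $-2$, so $f_0$ becomes $-q^{-2}\frac{1-q^2z}{1-q^{-2}z}=f_2$. Hence the two projector terms merge into $f_2(z)(P^q_{\omega_2}+P^q_{\omega_0})=f_2(z)P^q_W$. To make this rigorous, I would check directly that the operator $P^q_{2\omega_1}+f_2(z)P^q_W$ commutes with $\Delta(E_0)$ and $\Delta(F_0)$ on the socle vector of $W$. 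Uniqueness of the intertwiner for generic $z$ then rules out any nilpotent correction.
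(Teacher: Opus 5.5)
\textbf{The case $m=2n$ has a genuine gap, and the step you propose there would fail.} The nilpotent part of $\check R(z)$ on $W$ does not vanish.

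Your formal limit $m-2n\to 0$ does not merge the two projector terms, because $P^q_{\omega_0}$ is singular there. Indeed $P^q_{\omega_0}=Q_q/c$, where $Q_q$ is the rank-one map ``invariant pairing, then insertion of the invariant vector $\omega$'', and $c$ is the pairing of $\omega$ with itself. The vanishing $c=0$ is exactly what makes $W$ indecomposable when $m=2n$.

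Write the generic answer as $P^q_{2\omega_1}+f_2P^q_W+(f_0-f_2)\,Q_q/c$, with $P^q_W=P^q_{\omega_2}+P^q_{\omega_0}$. Both $f_0-f_2$ and $c$ vanish at $m=2n$, but their ratio does not. At $q=1$, take $Q$ to be the standard Brauer operator, so $Q^2=cQ$ with $c=\pm(m-2n)$ (the sign depends on the parity convention). The rational coefficients give $f_0-f_2=\frac{4u(m-2n)}{(2-2u)(2-2u-m+2n)}$. The extra term is therefore $\pm\frac{4u}{(2-2u)(2-2u-m+2n)}\,Q$, which at $m=2n$ becomes $\pm\frac{u}{(1-u)^2}\,Q\neq 0$. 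It has a double pole at $u=1$, where the simple poles of $f_2$ and $f_0$ collide; your own $q$-character remark, with $q^{\pm 2}$ and $q^{\pm(m-2n-2)}$ coinciding, points to this.

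This is not an artifact of the limit. The $\mathfrak{osp}_{m|2n}$ Yangian $R$-matrix $R(u)=1-\frac{P}{u}+\frac{Q}{u-\kappa}$, with $\kappa=\frac{m-2n}{2}-1$, holds for all $m,n$. For $m=2n$ one has $Q\neq 0=Q^2$, and $PR(u)$ acts on $W$ as a scalar plus $\frac{PQ}{u+1}$. So the $m=2n$ line cannot be proved in the stated form. The intertwiner is $P^q_{2\omega_1}-q^{-2}\frac{1-q^2z}{1-q^{-2}z}P^q_W+g(z)\,Q_q$ with $g\not\equiv 0$.

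Your proposed rigorous check could not detect this term. $Q_q$ kills $\omega$ and every vector of nonzero weight, while $\Delta(E_0)\omega$ and $\Delta(F_0)\omega$ have nonzero weight. Hence an operator and the same operator plus $gQ_q$ behave identically on the socle vector. Moreover, commuting with $\Delta(E_0),\Delta(F_0)$ on one vector does not make an operator an intertwiner, so uniqueness has nothing to act on. To find $g$ you must impose the $E_0$- or $F_0$-relation on vectors that $\Delta(E_0)$ or $\Delta(F_0)$ sends into the weight-zero space with nonzero pairing.

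\textbf{For $m\neq 2n$ you follow the route the paper points to, but the graph needs fixing.} The route is Schur's lemma on the multiplicity-free decomposition, then the tensor product graph of \cite{GZ03} or the $q$-character method of \cite{DM25a}. At $z=0$ the eigenvalues are $1,\,-q^{-2},\,-q^{m-2n-2}$. So $L_{\omega_2}$ and $L_{\omega_0}$ have the same parity and are not adjacent. Both are joined to $L_{2\omega_1}$, and it is the edge from $L_{2\omega_1}$ to $L_{\omega_0}$ that yields the single factor $-q^{m-2n-2}\frac{1-q^{2n-m+2}z}{1-q^{m-2n-2}z}$. An edge from $L_{\omega_2}$ to $L_{\omega_0}$ would make $f_0/f_2$ a single factor, which contradicts the formula you are proving.
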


Let $P_\lambda=\lim_{q\to 1}P_{\lambda}^q\,\,$ and $P_W=\lim_{q\to 1}P_W^q$, be the $U\fk{osp}_{m\vert 2n}$ projectors.

\begin{cor}
In the rational case, the corresponding rational $R$-matrix is given by 
\eq{\label{super:RuB}
\check{R}(u) = \begin{cases}
    P_{2\omega_1}+\dfrac{1+u}{1-u}P_{\omega_2}+\dfrac{2n-m+2+2u}{2n-m+2-2u}P_{\omega_0} & \text{if }m\ne 2n\ , 
    \vspace{10pt}\\
    P_{2\omega_1}+\dfrac{1+u}{1-u}P_W & \text{if }m=2n\ . 
    \end{cases}
}
\end{cor}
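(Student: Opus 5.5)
The proof will follow the same pattern as the proof of the corresponding corollary in type $\mathfrak{sl}_{m\vert n}$. I substitute $z=q^{2u}$ into the projector formula \eqref{super:R proj B} and take the limit $q\to 1$ coefficient by coefficient. The projectors behave well in this limit: $P^q_\lambda\to P_\lambda$ and $P^q_W\to P_W$ by definition. So it suffices to compute the limits of the scalar coefficients.

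For the coefficient of $P^q_{\omega_2}$ (or of $P^q_W$ when $m=2n$), I write
$$-q^{-2}\frac{1-q^{2}z}{1-q^{-2}z}=-q^{-2}\frac{1-q^{2+2u}}{1-q^{2u-2}}.$$
Numerator and denominator both vanish at $q=1$, so I expand to first order in $\hbar=\log q$. This gives $1-q^{2+2u}\approx -(2+2u)\hbar$ and $1-q^{2u-2}\approx -(2u-2)\hbar$. The ratio therefore tends to $-\frac{2+2u}{2u-2}=\frac{1+u}{1-u}$, as claimed.

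For the coefficient of $P^q_{\omega_0}$ when $m\ne 2n$, I set $k=2n-m+2$. The coefficient becomes
$$-q^{-k}\frac{1-q^{k}z}{1-q^{-k}z}=-q^{-k}\frac{1-q^{k+2u}}{1-q^{2u-k}},$$
which tends to $-\frac{k+2u}{2u-k}=\frac{k+2u}{k-2u}$. This is exactly $\frac{2n-m+2+2u}{2n-m+2-2u}$.

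The only point that needs care is the degenerate value $k=0$, i.e. $m=2n+2$. There $q^{m-2n-2}=1$ and the coefficient is identically $-\frac{1-z}{1-z}=-1$, while the claimed limit $\frac{2u}{-2u}$ is also $-1$. So the formula still holds, read as a rational function of $u$. The case $m=2n$ is already covered by the computation for $P_W$.
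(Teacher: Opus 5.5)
Your proposal is correct and is the paper's proof: substitute $z=q^{2u}$ into the projector formula and take $q\to 1$. You simply carry out the first-order expansion of each scalar coefficient explicitly, and your check of the degenerate value $m=2n+2$, where the $P_{\omega_0}$ coefficient is identically $-1$, is a harmless extra verification.
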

\begin{proof}
We substitute $z=q^{2u}$ in \eqref{super:R proj B} and take the limit $q\to 1$.
\end{proof}

\section{Type D\texorpdfstring{$_{2\vert 1;\alpha}$}{2}}
\label{super:D}

In this section, we give an explicit formula of $\check{R}(z)$ for the $18$-dimensional representation of $U_q\tl{\text{D}}_{2\vert 1;\A}$ in terms of $U_q\text{D}_{2\vert 1;\A}$-linear maps and projectors related to the tensor square of the $17$-dimensional representation of $U_q\text{D}_{2\vert 1;\A}$. 

\subsection{Quantum algebra of finite type}

We consider the following Dynkin diagram for D$_{2\vert 1;\A}$, where all nodes are ferminonic. 
\begin{center}
    \begin{tikzpicture}[scale=0.5]
        \tikzmath{\a = 2; \b = 0.025; \e=0.1;}
        \node at (-\b,\a*1.732) {$\textcolor{mygreen}{\otimes}$};
        \node at (-\a,0) {$\textcolor{blue}{\otimes}$};
        \node at (\a,0) {$\textcolor{red}{\otimes}$};
        \node at (-0.7*\a-0.2,\a*0.866) {$\alpha_2$};
        \node at (0.7*\a+0.2,0.866*\a) {$\alpha_1$};
        \node at (0,-0.4) {$\alpha_3$};

        \node at (\a, -0.6) {$2$};
        \node at (-\a, -0.6) {$1$};
        \node at (-\b, \a*1.732+0.6) {$3$};

        \draw[-](\a-\e,0) -- (-\a+\e,0);
        \draw[-] (\a-\e/2,0+\e*1.732/2) -- (-\b+\e/2,\a*1.732-\e*1.732/2);
        \draw[-](-\b-\e/2,\a*1.732-\e*0.866) -- (-\a+\e/2,0+\e*0.866);
    \end{tikzpicture}
\end{center}
Here $\A_1,\A_2,\A_3\in\mathbb{C}$ are such that $\A_i\ne 0,-1$, and $\A_1+\A_2+\A_3=0$. We set $q_i=q^{\alpha_i}$, $i=1,2,3$.
The corresponding Cartan matrix is given by 
$$\begin{bmatrix}
    0 & \A_3 & \A_2 \\
    \A_3 & 0 & \A_1 \\
    \A_2 & \A_1 & 0
\end{bmatrix}\ .$$
Since this is already a symmetric matrix, we have $d_1=d_2=d_3=1$ in this case.

The Lie superalgebra $\mathfrak{g}=$D$_{2\vert 1;\alpha}$ has a $9$-dimensional even part isomorphic to a direct sum of three copies of $\mathfrak{sl}_2$, that is, $\mathfrak{g}_{\ol{0}}\cong \mathfrak{g}_1\oplus\mathfrak{g}_2\oplus\mathfrak{g}_3$, where $\mathfrak{g}_i\cong\mathfrak{sl}_2$, $i=1,2,3$. The $8$-dimensional odd part is given by $\mathfrak{g}_{\ol{1}}\cong V_1\otimes
V_2\otimes V_3$, where each $V_i$, $i=1,2,3$, is the $2$-dimensional vector representation of $\mathfrak{g}_i$ and the trivial module for two other $\mathfrak{sl}_2$.
The super Lie bracket gives a $\mathfrak{g}_{\ol{0}}$-module homomorphism from the symmetric square of $\mathfrak{g}_{\ol{1}}$ onto $\mathfrak{g}_{\ol{0}}$. The space of such maps is labeled by three constants $\alpha_1,\A_2,\A_3\in\C$.
In addition, by the super-Jacobi identity, one gets $\A_1+\A_2+\A_3=0$. Rescaling the generators of  $\mathfrak{g}_{\ol{1}}$,   one can choose $\A_1=1$, $\A_2=\A$ and $\A_3=-1-\A$. In this way, one obtains the superalgebra $\mathfrak{g}$ depending on a complex parameter $\A$. If $\A \neq 0,-1$, this algebra is simple. When $\A=1,-2,-1/2$, this algebra is isomorphic to $\mathfrak{osp}(4|2)$.

The irreducible modules for D$_{2\vert 1;\alpha}$ are classified (see \cite{K75}, \cite{J85} for details) by triples $(a,b,c)$ with $a,b,c\in\Z_{\ge 1}$ or $a=b=c=0$. The numbers $a,b,c$ correspond to the weights of the singular vector with respect to each of the three copies of $\mathfrak{sl}_2$. 
The corresponding highest weight, with respect to the chosen all fermionic Dynkin diagram, is given by $a\omega_1+b\omega_2+c\omega_3$, where 
$$\omega_1=\frac{1}{2}(-\alpha_1,\alpha_1,\alpha_1)\ ,\quad \omega_2=\frac{1}{2}(\alpha_2,-\alpha_2,\alpha_2)\ ,\quad \omega_3=\frac{1}{2}(\alpha_3,\alpha_3,-\alpha_3)\ .$$

We denote the $U_q$D$_{2\vert 1;\alpha}$-module corresponding to the triple $(a,b,c)$ by $L_{(a,b,c)}$. 

We construct a weighted basis $\{v_i\}_{i=1}^{17}$ for the $17$-dimensional $U_q$D$_{2\vert 1;\alpha}$-module $L_{(1,1,1)}$ as follows. The highest weight vector $v_1$ is of highest weight $(-\alpha_1,-\alpha_2,-\alpha_3)$. Other $v_i$ are such that the action of $U_q$D$_{2\vert 1;\alpha}$ is as in Figure \ref{fig:D e-f action}. In Figure \ref{fig:D e-f action},  the action of $E_1,E_2,E_3$ is shown in the left diagram and the action of $F_1,F_2,F_3$ in the right diagram. We use blue color for $E_1$ or $F_1$, red for $E_2$ or $F_2$, and green for $E_3$ or $F_3$, as in the Dynkin diagram nodes. Here $\overline{i}=18-i$. The coefficients shown on arrows are quantum numbers and if a coefficient is not shown then it is one. For example, $E_1\,v_{10}=-[\A_2]\,v_7$, $F_2\,v_8=[\alpha_3]\,v_{\overline{6}}$, $F_3\,v_8=[\alpha_2]\,v_{\overline{5}}$, etc. In the diagram on the right, the triples of numbers are the weights with respect of $U_{q_i}\mathfrak{sl}_2$, $i=1,2,3$, inside $U_q$D$_{2\vert 1;\alpha}$.
\begin{figure}[ht!]
    \centering
\dia{
\& v_1 \& \& \& v_1(1,1,1)\ar[ld, blue]\ar[d, red]\ar[rd, mygreen] \& \\
v_{2}\ar[ru, blue, "-\A_1"] \& v_{3}\ar[u, red, "-\A_2"] \& v_{4}\ar[lu, mygreen, "-\A_3"'] \& v_{2}(2,0,0)\ar[rd, mygreen,"\A_1"',pos=0.8]\ar[d,red,"\A_1"] \& v_3(0,2,0)\ar[ld, blue, "\A_2"', pos=0.2]\ar[rd, mygreen, "\A_2"', pos=0.8] \& v_4(0,0,2)\ar[ld, blue, "\A_3"', pos=0.1] \ar[d, red, "\A_3", pos=0.5] \\
v_5\ar[u, red]\ar[ru, blue] \& v_6\ar[lu, mygreen]\ar[ru, blue] \& v_7\ar[lu, mygreen]\ar[u, red] \& v_5(1,1,-1)\ar[rrd, mygreen] \& v_6(1,-1,1)\ar[d, red] \& v_7(-1,1,1)\ar[lld, blue]\\
v_{10}\ar[ru, red, "-\A_1", pos=0.9]\ar[rru, blue, "-\A_2", pos=0.9] \& v_9\ar[lu, mygreen, "-\A_1", pos=0.9]\ar[ru, blue, "-\A_3"', pos=0.9] \& v_8\ar[llu, mygreen, "-\A_2"', pos=0.99]\ar[lu, red, "-\A_3", pos=0.99] \& v_8(0,0,0)\ar[rrd, mygreen, "\A_2"', pos=0.9] \ar[rd, red, "\A_3", pos=0.6] \& v_9(0,0,0)\ar[ld, blue, "\A_3"', pos=0.8] \ar[rd, mygreen, "\A_1",pos=0.8] \& v_{10}(0,0,0)\ar[lld, blue, "\A_2", pos=0.9] \ar[ld, red,"\A_1"',] \\
v_{\overline{7}}\ar[rru, blue] \& v_{\overline{6}}\ar[u, red] \& v_{\overline{5}}\ar[llu, mygreen] \& v_{\overline{7}}(1,-1,-1)\ar[rd, mygreen]\ar[d, red] \& v_{\overline{6}}(-1,1,-1)\ar[rd, mygreen]\ar[ld, blue] \& v_{\overline{5}}(-1,-1,1)\ar[d, red] \ar[ld, blue] \\
v_{\overline{4}}\ar[ru, blue, "-\A_3", pos=0.8]\ar[u, red, "-\A_3"] \& v_{\overline{3}}\ar[ru, blue, "-\A_2", pos=0.8]\ar[lu, mygreen, "-\A_2", pos=0.75] \& v_{\overline{2}}\ar[u, red, "-\A_1"']\ar[lu, mygreen, "-\A_1", pos=0.9] \& v_{\overline{4}}(0,0,-2)\ar[rd, mygreen, "\A_3"] \& v_{\overline{3}}(0,-2,0)\ar[d, red, "\A_2"'] \& v_{\overline{2}}(-2,0,0)\ar[ld, blue, "\A_1"] \\
\& v_{\overline{1}}\ar[ru, blue]\ar[u, red]\ar[lu, mygreen] \& \& \& v_{\overline{1}}(-1,-1,-1) \&
}
\caption{The $17$-dimensional module of $U_q$D$_{2\vert 1;\alpha}$}
    \label{fig:D e-f action}
\end{figure}

The parity of $v_1, v_5, v_6, v_7, v_{\bar 5}, v_{\bar 6},v_{\bar 7},v_{\bar 1}$, is $1$, other basic vectors are even. After a $q\to 1$ limit, the module $L_{(1,1,1)}$ is the adjoint module of D$_{2\vert 1;\A}$. In our chosen basis, up to the $q\to 1$ limit, the vectors $v_2$, $v_{\ol{2}}$ along with some linear combination of $v_9,v_{10}$ form a basis of the adjoint module of $\fk{sl}_2$. The same is true for $v_3$, $v_{\ol{3}}$, along with some linear combination of $v_8,v_{10}$, and for $v_4$, $v_{\ol{4}}$, along with some linear combination of $v_8,v_9$. Together, these vectors form a basis of the even part of D$_{2\vert 1;\A}$. Up to a $q\to 1$ limit, the vectors $v_1, v_5, v_6,v_7, v_{\bar 5}, v_{\bar 6},v_{\bar 7},v_{\bar 1}$, form the $8$-dimensional representation of $\fk{sl}_2\oplus \fk{sl}_2 \oplus \fk{sl}_2$, and they give a basis of the odd part of D$_{2\vert 1;\alpha}$.

Note that unlike \cite{DM25a}, \cite{DM25b} the bases we chose in this paper are not orthonormal with respect to Shapovalov form. That simplifies the constants in the action but the $R$-matrix below is not self-adjoint.

\begin{prp}
   The basis $\{v_i\}_{i=1}^{17}$ exists. 
\end{prp}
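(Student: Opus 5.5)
The statement says that the operators defined by Figure \ref{fig:D e-f action} give a representation of $U_q$D$_{2\vert 1;\alpha}$ on the span of $v_1,\dots,v_{17}$. I would prove it by direct verification of the defining relations of Definition \ref{DJ:super} restricted to $i,j\in\mr{I}=\{1,2,3\}$, including the higher Serre relations. Concretely, let $V$ be the $17$-dimensional superspace with basis $\{v_i\}$ and parities as listed. Define $E_i,F_i$ by the arrows in the figure, and define $K_i$ diagonally. Read off the weight of $v_1$ as $(-\alpha_1,-\alpha_2,-\alpha_3)$, i.e. $K_iv_1=q^{-\alpha_i}v_1$. Then determine the weight of each remaining $v_j$ by subtracting the appropriate simple roots along $F$-arrows, so that $K_iv_j=q^{(\mathrm{wt}\,v_j)_i}v_j$.

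The order of checks is as follows.
\begin{enumerate}
\item The $K$-relations: commutativity and $K_iE_jK_i^{-1}=q^{B_{ij}}E_j$. These are automatic once the weights are assigned consistently along arrows. One must check that every arrow of colour $j$ changes the weight by exactly $\pm\alpha_j$ (the $j$-th column of $C$). This is a finite bookkeeping check, and it also confirms that the figure's weight diagram is consistent; for instance, paths $v_1\to v_2\to v_5$ versus $v_1\to v_3\to v_5$ must give equal weights.
\item Parity: each $E_i,F_i$ is odd, and the arrows connect vectors of opposite parity. The layering (odd, even, odd, even, \dots) makes this immediate.
\item The relations $E_iF_j+F_jE_i=\delta_{ij}[K_i;0]$ (all $s_i=1$). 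For $i\neq j$, the two paths $v\to\cdot\to w$ must cancel up to sign. For $i=j$, on each $v_k$ of weight $\lambda$, one checks that $(E_iF_i+F_iE_i)v_k=[\lambda_i]v_k$, using identities such as $[\alpha_1]+[\alpha_2]+[\alpha_3]$-type relations that follow from $\alpha_1+\alpha_2+\alpha_3=0$. A typical identity is $[a][b]$-combinations reducing to $[a+b]=[-c]$.
\item $E_i^2=F_i^2=0$, which is clear since no two consecutive arrows of the same colour occur in the figure.
\item $E_iE_j+E_jE_i=0$ is vacuous here since all $B_{ij}\neq0$ for $i\neq j$.
\item The higher Serre relations for three mutually connected fermionic nodes, as in Proposition 6.3.1 of \cite{Y99}.
\end{enumerate}

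The main obstacle is step (3) on the three-dimensional zero weight space $\langle v_8,v_9,v_{10}\rangle$, together with step (6). In step (3), the diagonal entries there involve products like $\alpha$-labelled quantum numbers, and the cancellation needs the identity
\[
[\alpha_1][\alpha_2]+[\alpha_2][\alpha_3]+[\alpha_3][\alpha_1]\ \text{-type relations}
\]
valid only because $\alpha_1+\alpha_2+\alpha_3=0$. I would reduce these to the identity $[a][b]=[a+b]\,\cdot$-expansions, e.g. $q^{a}[b]+q^{-b}[a]=[a+b]$. For step (6), I would use the fact that, on a weight basis, each Serre expression maps a weight space to one of weight $\pm(2\alpha_i+\alpha_j+\alpha_k)$ or similar. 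Most of these target weights are not weights of $V$, which kills the relation automatically. The remaining few can be checked on the handful of vectors where both source and target are weights.

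Finally, as a sanity check rather than part of the proof, letting $q\to1$ should recover the adjoint action of D$_{2\vert1;\alpha}$. Irreducibility then follows from \cite{G07} and the highest weight $(1,1,1)$, so the module is $L_{(1,1,1)}$. Since all checks are finite polynomial identities in $q^{\pm\alpha_i}$, they can also be confirmed symbolically by computer, as the authors do for the Serre relations.
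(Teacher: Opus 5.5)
Your proposal is correct and is essentially the paper's proof: the paper also just checks all defining relations of $U_q$D$_{2\vert 1;\alpha}$ explicitly on the basis of Figure~\ref{fig:D e-f action}. Two small corrections, neither of which affects the argument: for the all-fermionic triangle the only higher Serre relation is cubic, of degree one in each of $E_1,E_2,E_3$, so its weight shift is the highest root and not something like $2\alpha_i+\alpha_j+\alpha_k$; and in this basis the $E_iF_j+F_jE_i$ checks on the zero weight space are plain two-path cancellations that need no $q$-number identities, with $\alpha_1+\alpha_2+\alpha_3=0$ entering only through the weight bookkeeping.
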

\begin{proof}
    We explicitly check all the relations of $U_q$D$_{2\vert 1;\alpha}$.
\end{proof}

\subsection{Quantum affine algebra and \texorpdfstring{$q$}--characters}

We consider the Dynkin diagram for $U_q\tl{\text{D}}_{2\vert 1;\alpha}$ as shown below, where all nodes are fermionic.
\begin{center}
    \begin{tikzpicture}
        \tikzmath{\a = 2; \b = 0.025; \e=0.1;}
        \node at (0,\a/1.732) {$\textcolor{black}{\otimes}$};
        \node at (-\b,\a*1.732) {$\textcolor{mygreen}{\otimes}$};
        \node at (-\a,0) {$\textcolor{blue}{\otimes}$};
        \node at (\a,0) {$\textcolor{red}{\otimes}$};
        \node at (-0.7*\a,\a*0.866) {$\alpha_2$};
        \node at (0.7*\a,0.866*\a) {$\alpha_1$};
        \node at (0,-0.2) {$\alpha_3$};
        \node at (0.5*\a,0.4*\a) {$\alpha_2$};
        \node at (-0.5*\a,0.4*\a) {$\alpha_1$};
        \node at (-0.25,\a) {$\alpha_3$};

        \node at (\a, -0.3) {$2$};
        \node at (-\a, -0.3) {$1$};
        \node at (0, \a/1.732-0.3) {$0$};
        \node at (-\b, \a*1.732+0.3) {$3$};

        \draw[-](0+\e,\a/1.732-\e/1.732) -- (\a-\e,0+\e/1.732);
        \draw[-](0-\e,\a/1.732-\e/1.732) -- (-\a+\e,0+\e/1.732);
        \draw[-](\a-\e,0) -- (-\a+\e,0);
        \draw[-] (-\b,\a*1.732-\e) -- (0,\a/1.732+\e);
        \draw[-] (\a-\e/2,0+\e*1.732/2) -- (-\b+\e/2,\a*1.732-\e*1.732/2);
        \draw[-](-\b-\e/2,\a*1.732-\e*0.866) -- (-\a+\e/2,0+\e*0.866);
    \end{tikzpicture}
\end{center}
The corresponding affine Cartan matrix is given by 
$$\begin{bmatrix}
    0 & \A_1 & \A_2 & \A_3 \\
    \A_1 & 0 & \A_3 & \A_2 \\
    \A_2 & \A_3 & 0 & \A_1 \\
    \A_3 & \A_2 & \A_1 & 0
\end{bmatrix}\ .$$
Since this is already a symmetric matrix, we have $d_0=d_1=d_2=d_3=1$ in this case.

The affine roots, see \eqref{affine roots}, are given by 
\bee{
        A_{1,a} = \mr{2}_{aq_3}^{aq_3^{-1}}\,\mr{3}_{aq_2}^{aq_2^{-1}} \ ,\quad 
        A_{2,a} = \mr{1}_{aq_3}^{aq_3^{-1}}\,\mr{3}_{aq_1}^{aq_1^{-1}} \ ,\quad 
        A_{3,a} = \mr{1}_{aq_2}^{aq_2^{-1}}\,\mr{2}_{aq_1}^{aq_1^{-1}}\ .
}
Here $\mr{1}_b^a$, $\mr{2}_b^a$, $\mr{3}_b^a$ are triples of rational functions given by 
$$\mr{1}_b^a=\bigg(\sqrt{a/b}\ \frac{1-z/a}{1-z/b},1,1\bigg)\ ,\quad \mr{2}_b^a=\bigg(1,\sqrt{a/b}\ \frac{1-z/a}{1-z/b},1\bigg) \ ,\quad \mr{3}_b^a=\bigg(1,1,\sqrt{a/b}\ \frac{1-z/a}{1-z/b}\bigg)\ ,\quad a,b\in\C^\times\ .$$ 

A thin $18$-dimensional $U_q\tl{\text{D}}_{2\vert 1;\alpha}$-module  $\tl{L}_{(1,1,1)}$, was constructed in \cite{FJM22} using new Drinfeld realization. 
The diagram in Figure \ref{fig:supp 18 dim D} shows the $q$-character of $\tl{L}_{(1,1,1)}$ and the action in a corresponding $\ell$-weighted basis. In the diagram we use the notation $\mr {1}_{c,d}^{a,b}=\mr {1}_c^a \mr{1}_d^b$, $\mr {2}_{c,d}^{a,b}=\mr {2}_c^a \mr{2}_d^b$, etc.
The straight arrows of color $i$ correspond to multiplication in the $q$-character by $A_{i,q_i^{-1}}^{-1}$, while the squiggly arrows correspond to multiplication by $A_{i,q_i}^{-1}$. The matrix coefficients of $X_i^\pm(z)$ between corresponding $\ell$-weight vectors are delta functions $\delta(q_iz)$ or $\delta(q_i^{-1}z)$ for straight and squiggly  arrows correspondingly, multiplied by constants. The constants are called the supplement in \cite{FJM22} and  are shown in the picture. The constants on the right of the arrows (when viewing the arrows from tail towards head) correspond to the action of $X_i^+(z)$, while the constants on the left correspond to the action of $X_i^-(z)$.

\begin{figure}[ht!]
    \centering
\ddia{column sep=2cm, row sep=0.75cm}{
\& \tl v_1\bigg(\mr{1}^{q_1}_{q_1^{-1}}\mr{2}^{q_2}_{q_2^{-1}}\mr{3}^{q_3}_{q_3^{-1}}\bigg) \ar[ld,blue,"-1","1"']\ar[d,red,"-1","1"']\ar[rd,mygreen,"-1","1"'] \& \\
\tl v_2\bigg(\mr{1}^{q_1}_{q_1^{-1}}\mr{2}^{q_2q_3^2}_{q_2^{-1}}\mr{3}^{q_3q_2^2}_{q_3^{-1}}\bigg) \ar[d,red,"c_2^{-1}","c_1"']\ar[rd,mygreen,"c_3^{-1}","c_1"',pos=0.8] \& \tl v_3\bigg(\mr{1}^{q_1q_3^2}_{q_1^{-1}}\mr{2}^{q_2}_{q_2^{-1}}\mr{3}^{q_3q_1^2}_{q_3^{-1}}\bigg)                  \ar[ld,blue,"c_1^{-1}","c_2"',pos=0.8]\ar[rd,mygreen,"c_3^{-1}","c_2"',pos=0.8] \& \tl v_4\bigg(\mr{1}^{q_1q_2^2}_{q_1^{-1}}\mr{2}^{q_2q_1^2}_{q_2^{-1}}\mr{3}^{q_3}_{q_3^{-1}}\bigg) \ar[ld,blue,"c_1^{-1}","c_3"',pos=0.8]\ar[d,red,"c_2^{-1}","c_3"'] \\
\tl v_5\bigg(\mr{1}^{q_1q_3^2}_{q_1^{-1}}\mr{2}^{q_2q_3^2}_{q_2^{-1}}\mr{3}^{q_3q_1^2,q_3q_2^2}_{q_3^{-1},q_3}\bigg) \ar[rdd,mygreen,"-c_3^{-2}","c_1c_2"',pos=0.7]\ar[rrd,mygreen,squiggly,"c_3^{-2}","c_1c_2"',pos=0.2] \& \tl v_6\bigg(\mr{1}^{q_1q_2^2}_{q_1^{-1}}\mr{2}^{q_2q_1^2,q_2q_3^2}_{q_2^{-1},q_2}\mr{3}^{q_3q_2^2}_{q_3^{-1}}\bigg) \ar[d,red,squiggly,"c_2^{-2}","c_1c_3"',pos=0.15] \ar[dd,red,shift left=45pt,"-c_2^{-2}","c_1c_3"',pos=0.6] \& \tl v_7\bigg(\mr{1}^{q_1q_2^2,q_1q_3^2}_{q_1^{-1},q_1}\mr{2}^{q_2q_1^2}_{q_2^{-1}}\mr{3}^{q_3q_1^2}_{q_3^{-1}}\bigg) \ar[lld,blue,squiggly,"c_1^{-2}","c_2c_3"',pos=0.1]\ar[ldd,blue,"-c_1^{-2}","c_2c_3"',pos=0.5] \\
\tl v_8\bigg(\mr{1}^{q_1q_2^2,q_1q_3^2}_{q_1^{-1},q_1}\bigg) \ar[dd,blue,"c_1^{-2}","-c_2c_3"'] \& \tl v_9\bigg(\mr{2}^{q_2q_1^2,q_2q_3^2}_{q_2^{-1},q_2}\bigg) \ar[dd,red,shift right=15pt,"c_2^{-2}","-c_1c_3"',pos=0.8] \& \tl v_{10}\bigg(\mr{3}^{q_3q_1^2,q_3q_2^2}_{q_3^{-1},q_3}\bigg) \ar[dd,mygreen,"c_3^{-2}","-c_1c_2"'] \\
\& \tl v_{18}\mathsmaller{\bigg(\mr{1}^{q_1q_2^2,q_1q_3^2}_{q_1^{-1},q_1}\mr{2}^{q_2q_1^2,q_2q_3^2}_{q_2^{-1},q_2}\mr{3}^{q_3q_1^2,q_3q_2^2}_{q_3^{-1},q_3}\bigg)} \ar[ld,blue,squiggly,"c_1^{-2}","c_2c_3"']\ar[d,red,squiggly, shift left=30pt,"c_2^{-2}","c_1c_3"']\ar[rd,mygreen,squiggly,"c_3^{-2}","c_1c_2"'] \& \\
\tl v_{\ol{7}}\bigg(\mr{1}^{q_1q_2^2,q_1q_3^2}_{q_1^{-1},q_1}\mr{2}^{q_2q_3^2}_{q_2}\mr{3}^{q_3q_2^2}_{q_3}\bigg) \ar[d,red,squiggly,"-c_2^{-1}","c_3"']\ar[rd,mygreen,squiggly,"-c_3^{-1}","c_2"',pos=0.8] \& \tl v_{\ol{6}}\bigg(\mr{1}^{q_1q_3^2}_{q_1}\mr{2}^{q_2q_1^2,q_2q_3^2}_{q_2^{-1},q_2}\mr{3}^{q_3q_1^2}_{q_3}\bigg) \ar[ld,blue,squiggly,"-c_1^{-1}","c_3"',pos=0.8]\ar[rd,mygreen,squiggly,"-c_3^{-1}","c_1"',pos=0.8]  \& \tl v_{\ol{5}}\bigg(\mr{1}^{q_1q_2^2}_{q_1}\mr{2}^{q_2q_1^2}_{q_2}\mr{3}^{q_3q_1^2,q_3q_2^2}_{q_3^{-1},q_3}\bigg) \ar[ld,blue,squiggly,"-c_1^{-1}","c_2"',pos=0.8]\ar[d,red,squiggly,"-c_2^{-1}","c_1"'] \\
\tl v_{\ol{4}}\bigg(\mr{1}^{q_1q_3^2}_{q_1}\mr{2}^{q_2q_3^2}_{q_2}\mr{3}^{q_3^{-1}}_{q_3}\bigg) \ar[rd,mygreen,squiggly,"1","1"'] \& \tl v_{\ol{3}}\bigg(\mr{1}^{q_1q_2^2}_{q_1}\mr{2}^{q_2^{-1}}_{q_2}\mr{3}^{q_3q_2^2}_{q_3}\bigg) \ar[d,red,squiggly,"1","1"'] \& \tl v_{\ol{2}}\bigg(\mr{1}^{q_1^{-1}}_{q_1}\mr{2}^{q_2q_1^2}_{q_2}\mr{3}^{q_3q_1^2}_{q_3}\bigg) \ar[ld,blue,squiggly,"1","1"'] \\
\& \tl v_{\ol{1}}\bigg(\mr{1}^{q_1^{-1}}_{q_1}\mr{2}^{q_2^{-1}}_{q_2}\mr{3}^{q_3^{-1}}_{q_3}\bigg) \&
}
\caption{The $q$-character and the supplement for $18$-dimensional $U_q\tl{\text{D}}_{2|1;\A}$-module $\tl{L}_{(1,1,1)}(1)$. Here $c_i=q_i-q_i^{-1}$}
\label{fig:supp 18 dim D}
\end{figure}

We now construct the module $\tl{L}_{(1,1,1)}(z)$, $z\in\C^\times$, in the Drinfeld-Jimbo realization.

The restriction of $\tl{L}_{(1,1,1)}(z)$ to $U_q$D$_{2\vert 1;\alpha}$ decomposes as
$$\tl{L}_{(1,1,1)}(z)\cong L_{(1,1,1)}\oplus L_{(0,0,0)}\ .$$ 

To give action of $U_q\tl{\text{D}}_{2\vert 1;\alpha}$  on $\tl{L}_{(1,1,1)}(z)$ in the Drinfeld-Jimbo realization, 
we use a basis $\{v_{i}\}_{i=1}^{17}\cup\{v_{18}\}$ for $\tl{L}_{(1,1,1)}(z)$, where $\{v_i\}_{i=1}^{17}$ is the basis of $U_q$D$_{2\vert 1;\alpha}$-module $L_{(1,1,1)}$ as chosen in Figure \ref{fig:D e-f action} and $v_{18}$ is a basis of the $1$-dimensional $U_q$D$_{2\vert 1;\alpha}$-module $L_{(0,0,0)}$. Note that  $v_i$ are scalar multiples of $\tilde v_i$ for $i=1,2,\dots,7$ and $i=\bar 1,\bar 2,\dots,\bar 7$ but not for $i=8,9,10$.

Let
\eq{\label{D18 E0 action}
E_0 = z\,\Big(& -\frac{[2]_{\A_1}}{2}\,E_{8,1}-\frac{[2]_{\A_2}}{2}\,E_{9,1}-\frac{[2]_{\A_3}}{2}\,E_{10,1}+\frac{1}{2}\,E_{18,1} - [\A_1]\,E_{\overline{1},8} -[\A_2]\,E_{\overline{1},9} - [\A_3]\,E_{\overline{1},10} \\ 
& -[2]^{\mr{i}}_{\A_2}\,[2]^{\mr{i}}_{\A_3}\,E_{\overline{1},18} - [\A_1]\,E_{\overline{7},2} - [\A_2]\,E_{\overline{6},3}-[\A_3]\,E_{\overline{5},4}+E_{\overline{4},5}+E_{\overline{3},6}+E_{\overline{2},7}\Big) \ ,
}
\eq{\label{D18 F0 action}
F_0 = z^{-1}\,\Big(& [\A_1]\,E_{1,8} + [\A_2]\,E_{1,9} + [\A_3] E_{1,10} + [2]^{\mr{i}}_{\A_2}\,[2]^{\mr{i}}_{\A_3}\,E_{1,18}-\frac{[2]_{\A_1}}{2}\,E_{8,\overline{1}}-\frac{[2]_{\A_2}}{2}\,E_{9,\overline{1}} \\
& -\frac{[2]_{\A_3}}{2}\,E_{10,\overline{1}} + \frac{1}{2}\,E_{18,\overline{1}} + E_{2,\overline{7}} + E_{3,\overline{6}} + E_{4,\overline{5}} + [\A_3]\,E_{5,\overline{4}} + [\A_2]\,E_{6,\overline{3}} + [\A_1]\,E_{7,\overline{2}}\Big) \ ,
}
where $E_{ij}$ are matrix units, that is, $E_{ij}v_k=\D_{jk}v_i$.

\begin{prp}
    The action of $E_i, F_i$, $i=1,2,3,$ as in Figure  \ref{fig:D e-f action} and $E_0, F_0$ as in \eqref{D18 E0 action}, \eqref{D18 F0 action}  gives a $U_q\tl{\text{D}}_{2\vert 1;\alpha}$-module structure.
\end{prp}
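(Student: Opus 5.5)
The plan is a direct verification of the defining relations of Definition \ref{DJ:super} on the $18$-dimensional space with basis $\{v_i\}_{i=1}^{17}\cup\{v_{18}\}$. The relations involving only $E_i,F_i,K_i^{\pm1}$ with $i\in\mr{I}=\{1,2,3\}$ already hold: on $\mathrm{span}\{v_1,\dots,v_{17}\}$ this is the content of the Proposition establishing the basis of $L_{(1,1,1)}$, and $v_{18}$ spans the trivial module $L_{(0,0,0)}$. Hence only the relations involving the index $0$ need to be checked. The first step is to define $K_0$ through the relation $K_0^{a_0}K_1^{a_1}K_2^{a_2}K_3^{a_3}=1$. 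Here $\tl C a^T=0$ forces $a=(1,1,1,1)$, because every row of $\tl C$ sums to $\A_1+\A_2+\A_3=0$. So we set $K_0=(K_1K_2K_3)^{-1}$, which acts diagonally on the weight basis. Then $K_0E_jK_0^{-1}=q^{\tl B_{0j}}E_j$ for $j\neq 0$ is automatic, and the relation $K_0E_0K_0^{-1}=E_0$ (recall $\tl B_{00}=0$) is checked by observing that each matrix unit occurring in \eqref{D18 E0 action} maps $v_j$ to some $v_i$ whose weight differs by $-(\A_1+\A_2+\A_3)$-type shifts, that is, by minus the highest root. This is a bookkeeping check with the weights listed in Figure \ref{fig:D e-f action}. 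The $z$-dependence is harmless: every relation is homogeneous under $\tau_z$, so it suffices to take $z=1$.

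The second step handles the relations between $E_0,F_0$ and $E_j,F_j$ for $j\in\{1,2,3\}$. Since $\tl B_{0j}\neq0$ for all $j$, the only quadratic mixed relations are $E_0F_j+F_jE_0=0$ and $E_jF_0+F_0E_j=0$, all nodes being fermionic. Both sides are $18\times18$ matrices supported on few entries, so each product is computed directly from the sparse matrices. We also check $E_0^2=F_0^2=0$. This holds because $E_0$ sends the top layer $\{v_1\}$ to the middle layer, the middle layer to $v_{\ol 1}$, and the layer $\{v_2,v_3,v_4,v_5,v_6,v_7\}$ to the layer $\{v_{\ol 7},\dots,v_{\ol 2}\}$. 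The relevant compositions vanish, except possibly the path $v_1\to\{v_8,v_9,v_{10},v_{18}\}\to v_{\ol1}$, where the sum
\[
\tfrac{[2]_{\A_1}}{2}[\A_1]+\tfrac{[2]_{\A_2}}{2}[\A_2]+\tfrac{[2]_{\A_3}}{2}[\A_3]-\tfrac12[2]^{\mr i}_{\A_2}[2]^{\mr i}_{\A_3}
\]
must vanish. This is a $q$-identity in $q^{\A_i}$ with $\A_1+\A_2+\A_3=0$, and I would verify it by clearing denominators. The relation $E_0F_0+F_0E_0=(K_0-K_0^{-1})/(q-q^{-1})$ is checked on each basis vector. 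The diagonal entries reduce to $q$-number identities of the same kind, and the off-diagonal entries, which arise on the weight-zero block $v_8,v_9,v_{10},v_{18}$, must cancel.

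The third step, which I expect to be the main obstacle, is the higher-order Serre relations of Proposition 6.3.1 in \cite{Y99}. For this all-fermionic affine diagram these are the relations of the type $[\,[[E_i,E_j]_q,E_k]_q,E_j]=0$ for each fermionic node $j$ with its neighbours $i,k$, and similarly for the $F$'s, including those in which $0$ plays the role of one of $i,j,k$. These are cubic and quartic words in $18\times18$ matrices. My plan is to evaluate them symbolically in the variables $q_1,q_2$, with $q_3=(q_1q_2)^{-1}$, exactly as the paper does for the other checks, by computer algebra. To reduce the work, I would use weights: each such word has a definite weight, and it suffices to apply it to those basis vectors whose target weight occurs in the module, which leaves only a handful of matrix entries per relation. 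The $\Sigma_3$-symmetry permuting nodes $1,2,3$ together with $\A_1,\A_2,\A_3$, visible in Figures \ref{fig:D e-f action} and in \eqref{D18 E0 action}, cuts the number of independent cases by a factor of up to six. Finally, the Chevalley-type anti-involution exchanging $E_i\leftrightarrow F_i$, under which \eqref{D18 E0 action} and \eqref{D18 F0 action} correspond up to the displayed rescalings, reduces the $F$-relations to the $E$-relations.
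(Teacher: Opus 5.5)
Your strategy is the same brute-force check the paper performs: verify every defining relation on the $18$-dimensional space, using that the $U_q$D$_{2|1;\alpha}$ relations already hold, that $a=(1,1,1,1)$ so $K_0=(K_1K_2K_3)^{-1}$, and that $z=1$ suffices. However, two concrete claims in your plan are wrong.

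\textbf{The identity in your second step is false in general.} Put $q_i=q^{\A_i}$, so that $q_1q_2q_3=1$. Then $[2]_{\A_i}[\A_i]=(q_i^2-q_i^{-2})/(q-q^{-1})$ and $[2]^{\mr{i}}_{\A_i}=q_i-q_i^{-1}$. Expanding the product shows $\sum_i(q_i^2-q_i^{-2})=\prod_i(q_i-q_i^{-1})$ whenever $q_1q_2q_3=1$. Hence the coefficient you want to vanish equals
\[
\tfrac12\big([\A_1]-1\big)\,(q_2-q_2^{-1})(q_3-q_3^{-1}),
\]
which is nonzero unless $[\A_1]=1$.

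The same number, up to sign, is the $v_1$-coefficient of $(E_0F_0+F_0E_0)v_1=F_0E_0v_1$, which must vanish since $K_0v_1=v_1$. It is also the $v_1$-coefficient of $F_0^2v_{\ol1}$. So clearing denominators will not give a $q$-identity valid under $\A_1+\A_2+\A_3=0$ alone.

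With the coefficients as displayed, the verification closes only in one of two ways. Either impose the normalization $\A_1=1$, which the paper mentions when introducing D$_{2|1;\alpha}$ and which its condition $\A_i\neq 0,-1$ reflects. Or multiply by $[\A_1]$ the coefficients of $E_{\ol1,18}$ in \eqref{D18 E0 action} and of $E_{1,18}$ in \eqref{D18 F0 action}. You must impose one of these explicitly.

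The other entries are unaffected. For example, $(E_0F_1+F_1E_0)v_1=0$ reduces to $[2]_{\A_2}[\A_3]+[2]_{\A_3}[\A_2]=-2[\A_1]$, which holds for all $\A_i$. Note also that \eqref{D18 E0 action} is not symmetric under permuting the indices $1,2,3$ as printed, since the coefficient of $E_{\ol1,18}$ singles out the index $1$. So the $\Sigma_3$ reduction you invoke is not available in that form.

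\textbf{The higher Serre relations you list are not the relevant ones.} The relation $[[[E_i,E_j]_q,E_k]_q,E_j]=0$ of \cite{Y99} is imposed at an odd node $j$ whose two edges carry opposite labels $-x,x$. In the affine diagram here, every node is joined to the other three, and its edges carry $\A_1,\A_2,\A_3$, each once. A relation $\A_a=-\A_b$ would force the third $\A$ to vanish, which is excluded, so this type of relation never occurs.

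What the presentation does contain are cubic relations, one for each triangle $\{i,j,k\}\subset\{0,1,2,3\}$. In the free Lie superalgebra on odd generators, the degree $\alpha_i+\alpha_j+\alpha_k$ component is two-dimensional: there are three brackets and one super Jacobi relation. In $\tl{\mathfrak g}$, by contrast, it is the one-dimensional root space of $\theta$ or of $\delta-\alpha_m$. So an $\A$-dependent linear relation among these brackets must be imposed, and its $q$-analogue is the nontrivial Serre check, in particular for the three triangles containing $0$. A computer check restricted to the relations you name would leave exactly these unverified.
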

\begin{proof}
    We check all the relations in $U_q\tl{\text{D}}_{2\vert 1;\alpha}$ by a direct computation.
\end{proof}

\subsection{The \texorpdfstring{$R$}--Matrix}

Recall that the socle of a representation $W$ is the sum of all its irreducible subrepresentations. We denote it by $\soc(W)$.

Recall that the under-brackets indicate the dimension of the corresponding irreducible $U_q$D$_{2\vert 1;\alpha}$-module. 
\begin{thm}\label{square D thm}
    As $U_q$D$_{2\vert 1;\alpha}$-modules we have
\eq{\label{D tensor2}
\big(\tl{L}_{(1,1,1)}\big)^{\otimes 2} & \cong \big(\underbracket[0.1ex]{L_{(1,1,1)}}_{17}\oplus \underbracket[0.1ex]{L_{(0,0,0)}}_{1}\big)^{\otimes 2} \\
& \cong \big(\underbracket[0.1ex]{L_{(1,1,1)}}_{17} + \underbracket[0.1ex]{L_{(2,2,2)}}_{110} + \underbracket[0.1ex]{L_{(1,1,1)}}_{17}\big)\oplus \underbracket[0.1ex]{L_{(3,1,1)}}_{48} \oplus \underbracket[0.1ex]{L_{(1,3,1)}}_{48} \oplus \underbracket[0.1ex]{L_{(1,1,3)}}_{48} \oplus\ 2\underbracket[0.1ex]{L_{(1,1,1)}}_{17} \oplus\ 2\underbracket[0.1ex]{L_{(0,0,0)}}_{1} \ .
}

Let $W_a$ be the $144 =17+110+17$ dimensional direct summand in \eqref{D tensor2}. We have 
$$\soc(W_a)\cong L_{(1,1,1)}\ ,\quad  \soc\big(W_a/\soc(W_a)\big)\cong L_{(2,2,2)}\ .
$$

We have $$\dim\en_{U_q\text{D}_{2\vert 1;\alpha}}\Big(\big(\tl L_{(1,1,1)}\big)^{\otimes 2}\Big)=4+2^2+2^2+3+2=17 \ .$$
\end{thm}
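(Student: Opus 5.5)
The plan is to reduce everything to a computation with the explicit $17$-dimensional module of Figure \ref{fig:D e-f action}, tensored with itself using the symmetric coproduct.

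\textbf{Step 1: character bookkeeping.} Expand $(L_{(1,1,1)}\oplus L_{(0,0,0)})^{\otimes 2}=L_{(1,1,1)}^{\otimes 2}\oplus 2L_{(1,1,1)}\oplus L_{(0,0,0)}$. It then suffices to analyse $L_{(1,1,1)}^{\otimes 2}$, of dimension $289$. I will compute its character as a $\mathfrak{g}_{\bar 0}=\mathfrak{sl}_2^{\oplus 3}$-character from the triples listed in Figure \ref{fig:D e-f action}. I will then peel off characters of typical and atypical $U_q$D$_{2|1;\alpha}$-modules, using the Kac-module and atypicality description of \cite{K75}, \cite{J85}. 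The dimensions $110,48,48,48$ are read off from the Kac formula, and the composition factors are forced to be
\[
L_{(2,2,2)}+L_{(3,1,1)}+L_{(1,3,1)}+L_{(1,1,3)}+3L_{(1,1,1)}+L_{(0,0,0)}.
\]
The check is $110+144+51+1=306=289+17$; the extra $17$ is the $2L_{(1,1,1)}$ contributed from the mixed terms, so in the tensor square there are $4$ copies of $L_{(1,1,1)}$ and $2$ copies of $L_{(0,0,0)}$.

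\textbf{Step 2: submodule structure.} I will find all singular vectors in $L_{(1,1,1)}^{\otimes 2}$ by solving $E_i w=0$, $i=1,2,3$, weight space by weight space. This is a finite linear-algebra problem at weights $(2,2,2),(3,1,1),(1,3,1),(1,1,3),(1,1,1),(0,0,0)$, done symbolically in $q_1,q_2,q_3$.
\begin{itemize}
\item The modules $L_{(3,1,1)}$ etc.\ are typical, so they split off as direct summands.
\item At the highest weight, the $2\cdot 2\cdot 2$-type cyclic vector $v_1\otimes v_1$ generates a module $W_a$. I expect to show that the highest weight vector of weight $(1,1,1)$ found inside $W_a$ lies in $U_q\mathfrak g\cdot(v_1\otimes v_1)$ but is not itself a highest-weight generator of a complement. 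Together with the fact that the quotient by it contains an $L_{(2,2,2)}$ whose lowest layer again maps onto $L_{(1,1,1)}$, this gives the Loewy structure $L_{(1,1,1)}\,|\,L_{(2,2,2)}\,|\,L_{(1,1,1)}$.
\item Concretely, I will show that $W_a$ is generated by $v_1\otimes v_1$, has dimension $144$, and has a one-dimensional singular space of weight $(1,1,1)$. This yields $\soc(W_a)\cong L_{(1,1,1)}$, and $\soc$ of the quotient is then $L_{(2,2,2)}$.
\item The remaining singular vectors of weights $(1,1,1)$ and $(0,0,0)$ generate simple summands. I will verify directness by a dimension count and the nondegeneracy of the invariant (Shapovalov-type) pairing restricted to them.
\end{itemize}

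\textbf{Step 3: endomorphisms.} Write $M=W_a\oplus L_{(3,1,1)}\oplus L_{(1,3,1)}\oplus L_{(1,1,3)}\oplus 4L_{(1,1,1)}\oplus 2L_{(0,0,0)}$, including the mixed copies. The count of $\dim\en(M)$ is then
\[
\dim\en(M)=\dim\en(W_a)+\sum\dim\mathrm{Hom}(\cdot,\cdot)+\cdots,
\]
where the terms have the following sources:
\begin{itemize}
\item Each of the three typical summands contributes $1$, for a total of $3$.
\item $\en(W_a)$ is $2$-dimensional: the identity, plus the nilpotent map sending the head onto the socle.
\item $\mathrm{Hom}(W_a,L_{(1,1,1)})$ and $\mathrm{Hom}(L_{(1,1,1)},W_a)$ are each $1$-dimensional per copy.
\item Among the multiplicity spaces, the $L_{(1,1,1)}$ copies contribute $4=2^2$ and the trivials $2^2$.
\end{itemize}
Grouping as $4+2^2+2^2+3+2$ with the cross-Homs to $W_a$ accounted for, the total is $17$. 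The grouping must be matched precisely, and I would do so by computing $\dim\mathrm{Hom}$ between indecomposables via heads and socles.

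\textbf{Main obstacle.} The main difficulty is Step 2: establishing that $W_a$ is indecomposable with the stated socle series. This requires exhibiting that the weight-$(1,1,1)$ singular vector inside $U_q\mathfrak g(v_1\otimes v_1)$ has zero Shapovalov norm, i.e.\ it pairs trivially with the whole of $W_a$. I would compute this pairing explicitly on the relevant weight space, which is a symbolic but finite computation.
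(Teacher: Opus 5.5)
Your overall route (explicit singular vectors, the cyclic submodules they generate, then a Hom count) is the one the paper uses. However, Step 2 rests on a false claim: $v_1\otimes v_1$ does not generate $W_a$.

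\textbf{The main gap.} Since $v_1\otimes v_1$ is singular, $U_q\mathrm{D}_{2|1;\alpha}(v_1\otimes v_1)$ is a highest weight module with simple head $L_{(2,2,2)}$. It is the $127$-dimensional submodule $U_a\cong L_{(2,2,2)}+L_{(1,1,1)}$, not the $144$-dimensional $W_a$. The socle series you want, $L_{(1,1,1)}\,|\,L_{(2,2,2)}\,|\,L_{(1,1,1)}$, forces $W_a$ to have head $L_{(1,1,1)}$. Your own Step 3 uses the endomorphism sending head onto socle, which could not exist if the head were $L_{(2,2,2)}$.

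That top copy of $L_{(1,1,1)}$ is generated by no singular vector. The singular vectors of the same weight as $v_1$ span only a $3$-dimensional space ($u_{5a}\in U_a$, $v_1\otimes v_{18}$, $v_{18}\otimes v_1$), while $L_{(1,1,1)}$ occurs four times. So the submodule generated by all singular vectors has codimension $17$ in the $324$-dimensional square. Your plan to let the remaining singular vectors generate simple summands and ``verify directness by a dimension count'' therefore falls short by exactly $17$.

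The missing ingredient is the paper's vector $u_0$. It is a non-singular vector of the same weight as $v_1$, not in the submodule generated by the singular vectors, and its cyclic submodule is the $144$-dimensional $W_a\supset U_a$. Only with $u_0$ in hand does the summand $W_a$ appear at all. The absence of a fourth singular vector of that weight then shows that the top $L_{(1,1,1)}$ is glued on rather than split off.

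\textbf{Smaller slips.} In Step 1 the multiplicity of $L_{(1,1,1)}$ in $L_{(1,1,1)}^{\otimes 2}$ is $2$, not $3$, since $110+3\cdot 48+2\cdot 17+1=289$. Your ``check'' $306=289+17$ actually fails: the mixed terms contribute $34$ dimensions, not $17$, and the total of four copies in the full square holds only with the corrected count. Also, the typical dimension formula gives the $48$'s but not the $110$ of the atypical $L_{(2,2,2)}$; that needs a separate atypical character computation.

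Your ``main obstacle'' inherits the $U_a$/$W_a$ conflation. That $u_{5a}$ lies in the radical of the Shapovalov form on $U_q\mathrm{D}_{2|1;\alpha}(v_1\otimes v_1)$ is automatic once $u_{5a}$ lies in that highest weight module. It says nothing about the top layer, which is where the real work (finding $u_0$) lies.

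With the structure corrected, your Step 3 count $2+2\cdot 2+4+4+3=17$ agrees with the paper's.
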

\p{
By straightforward calculations we observe  inside the tensor square of $U_q$D$_{2\vert 1;\alpha}$-module $L_{(1,1,1)}\oplus L_{(0,0,0)}$ the following.

There are $9$ linearly independent singular vectors, which we enumerate as $u_1$, $u_{2}$, $u_{3}$, $u_{4}$, $u_{5a}$, $u_{5b}$, $u_{5c}$, $u_{6a}$, $u_{6b}$. 
The singular vector $u_1=v_1\otimes v_1$ generates the $127$-dimensional indecomposable submodule $L_{(2,2,2)}+L_{(1,1,1)}:=U_a$, inside the indecomposable direct summand $W_a=\big(L_{(1,1,1)}+L_{(2,2,2)}+L_{(1,1,1)}\big)$, in \eqref{D tensor2}. The singular vector 
\bee{
u_{5a} =\ & [2]_{\A_1}\,(v_1\otimes v_8-v_8\otimes v_1)+[2]_{\A_2}\,(v_1\otimes v_9-v_9\otimes v_1)+[2]_{\A_3}\,(v_1\otimes v_{10}-v_{10}\otimes v_1) \\
& -2\big(q_1^{1/2}v_2\otimes v_7-q_1^{-1/2}v_7\otimes v_2\big) -2\big(q_2^{1/2}v_3\otimes v_6-q_2^{-1/2}v_6\otimes v_3\big)-2\big(q_3^{1/2}v_4\otimes v_5-q_3^{-1/2}v_5\otimes v_4\big)\ ,
}
generates the $17$-dimensional submodule $L_{(1,1,1)}:=V_{a}$, inside $U_a$. Thus we have $V_a\subset U_a \subset W_a$. The singular vectors $u_{2}$, $u_3$, $u_4$, given respectively by 
$$u_{2}=v_1\otimes v_2+q_1\,v_2\otimes v_1\ ,\quad u_{3}=v_1\otimes v_3+q_2\,v_3\otimes v_1\ ,\quad u_{4}=v_1\otimes v_4+q_3\,v_4\otimes v_1\ ,$$
generate the $48$-dimensional irreducible direct summands $L_{(3,1,1)}$, $L_{(1,3,1)}$, $L_{(1,1,3)}$ respectively, in \eqref{D tensor2}. The singular vectors $u_{5b}=v_1\otimes v_{18}$ and $u_{5c}=v_{18}\otimes v_1$ generate the two irreducible direct summands $L_{(1,1,1)}$ in \eqref{D tensor2}, which we denote by $V_{b}$, $V_{c}$ respectively. The last two direct summands are one-dimensional, each generated by $u_{6a}, u_{6b}$. We have  
\bee{
u_{6a}=\ & 2[\A_2]\,[\A_3]\,\big(-v_1\otimes v_{\overline{1}} + v_{\ol{1}}\otimes v_1 + v_2\otimes v_{\overline{2}}+ v_{\ol{2}}\otimes v_2+ v_5\otimes v_{\overline{5}}- v_{\ol{5}}\otimes v_5+ v_6\otimes v_{\overline{6}}-v_{\ol{6}}\otimes v_6+ v_7\otimes v_{\overline{7}} \\
& -v_{\ol{7}}\otimes v_7\big) + 2[\A_1]\,[\A_3]\,(v_3\otimes v_{\overline{3}} + v_{\ol{3}}\otimes v_3) + 2[\A_1]\,[\A_2]\,(v_4\otimes v_{\overline{4}} + v_{\ol{4}}\otimes v_4) + \big([\A_1]\big)^2v_8\otimes v_8 \\
& + \big([\A_2]\big)^2v_9\otimes v_9 + \big([\A_3]\big)^2v_{10}\otimes v_{10} - [\A_1][\A_2]\,(v_8\otimes v_9+v_9\otimes v_8) - [\A_2][\A_3]\,(v_9\otimes v_{10}+v_{10}\otimes v_9) \\
& - [\A_1][\A_3]\,(v_8\otimes v_{10}+v_{10}\otimes v_8)\ ,
}
and $u_{6b}=v_{18}\otimes v_{18}$.

Thus, the submodule generated by all singular vectors has codimension $17$. In addition, there exists a vector $u_0$ of weight $(-\alpha_1,-\alpha_2,-\alpha_3)$ given by 
\bee{
u_{0} =\ & [2]_{\A_1}^{\mr{i}}\,\big(q_1\,v_1\otimes v_8 + q_1^{-1}\,v_8\otimes v_1 + q_2\,v_1\otimes v_9 + q_2^{-1}\,v_9\otimes v_1 + q_3^{-1}\,v_1\otimes v_{10} \\
& + q_3\,v_{10}\otimes v_1 - q_1^{\frac{1}{2}}\,v_2\otimes v_7 - q_1^{-\frac{1}{2}}v_7\otimes v_2\big) -\frac{[2]_{\A_2}\,[\A_1]}{[\A_2/2]}\,(v_3\otimes v_6-v_6\otimes v_3) \\
& + \frac{[2]_{\A_2}\,[\A_1]}{[\A_3]}\,\big(q_3^{-\frac{1}{2}}v_4\otimes v_5-q_3^{\frac{1}{2}}v_5\otimes v_4\big) - \frac{[2]_{\A_2-\A_1}\,[\A_1]}{[\A_3]}\,\big(q_3^{\frac{1}{2}}v_4\otimes v_5-q_3^{-\frac{1}{2}}v_5\otimes v_4\big) \ ,
}
such that $u_0$ is not in the submodule generated by singular vectors and such that $u_0$ generates a $144$ dimensional module. 

Note that we chose here the overall factor $[2]_{\A_1}^{\mr{i}}$ breaking the symmetry between $\alpha_i$. This factor makes the $q\to 1$ limit of $u_0$ well-defined.

Finally we clearly have $$\dim\en_{U_q\text{D}_{2\vert 1;\alpha}} W_a=2, \quad 
\dim \operatorname{Hom}_{U_q\text{D}_{2\vert 1;\alpha}} (W_a,V_b\oplus V_c)+\dim \operatorname{Hom}_{U_q\text{D}_{2\vert 1;\alpha}} (V_b\oplus V_c, W_a)=2+2=4$$ 
and the total space of endomorphisms of $\big(\tl L_{(1,1,1)}\big)^{\otimes 2}$ has dimension $17$. 
}

\begin{rmk}
    See \cite{GM25}, where
    modules with multiplicity-free socles are studied in the context of finite-dimensional representations of quantum affine $\mathfrak{sl}_2$. There is a natural directed graph corresponding to such modules. For the module $V_t$, the graph is  $L_{(1,1,1)}\to L_{(2,2,2)} \to L_{(1,1,1)}$. There are two nontrivial proper directed subgraphs corresponding to the two submodules $V_1 $ and $V_a$.
    
\end{rmk}
The $R$-matrix is a map of $U_q$D$_{2\vert 1;\alpha}$-modules. According to the last part of Theorem \ref{square D thm} any such map can be written as a linear combination of $17$ maps. Next, we choose those $17$ maps.

In  decomposition \eqref{D tensor2}, let $P_W^q$ be the projector onto the indecomposable summand $W_a$, let $P^q_{(3,1,1)}$, $P^q_{(1,3,1)}$, $P^q_{(1,1,3)}$ be projectors onto $L_{(3,1,1)}$, $L_{(1,3,1)}$, $L_{(1,1,3)}$, respectively. We identify $V_b\oplus V_c$ with $\C^2\otimes L_{(1,1,1)}$ and let $\id_{2\times 2}\otimes P^q_{(1,1,1)}$ be the projector in \eqref{D tensor2} onto the direct sum $V_b\oplus V_{c}$. In the same way, let $\id_{2\times 2}\otimes P^q_{(0,0,0)}$ be the projector onto $\C^2\otimes L_{(0,0,0)}$. Let $P^q_{0\to a}$ be the unique $U_q$D$_{2\vert 1;\alpha}$-linear map that maps all direct summands in \eqref{D tensor2}  to zero except $W_a$ which is mapped to the submodule $V_a$. The map $P^q_{0\to a}$  sends $u_0$ to $u_{5a}$ and all other singular vectors to zero. Similarly, let $P^q_{0\to b}$, $P^q_{0\to c}$ be the unique $U_q$D$_{2\vert 1;\alpha}$-linear maps that send $W_a$ onto the direct summands $V_b$, $V_c$, respectively, mapping $u_0$ to $u_{5b}$, $u_{5c}$, respectively and annihilating all other singular vectors. Let $P^q_{b\to a}$, $P^q_{c\to a}$ be the unique $U_q$D$_{2\vert 1;\alpha}$-linear maps from $V_b$, $V_c$, respectively, onto the submodule $V_a$, mapping $u_{5b}$, $u_{5c}$, respectively, to $u_{5a}$, and annihilating all other singular vectors.

\begin{thm}
\label{superD:Rcheck}
Let $d(z)=(1-q_1^{2}z)(1-q_2^{2}z)(1-q_3^{2}z)$. The $R$-matrix $\check{R}(z)$ is given by 
\eq{\label{thm:RqD21x}
\check{R}(z) =\ & P^q_{W} - q_1^2\frac{1-q_1^{-2}z}{1-q_1^2z}\,P^q_{(3,1,1)} - q_2^{2}\frac{1-q_2^{-2}z}{1-q_2^{2}z}\,P^q_{(1,3,1)} - q_3^{2}\frac{1-q_3^{-2}z}{1-q_3^{2}z}\,P^q_{(1,1,3)} \\
& -\dfrac{[2]_{\A_1}^{\mr{i}}\,(z-1)(2z^2-d(1)\,z-2)}{2\,d(z)} P^q_{0\to a} - \dfrac{[2]_{\A_1}^{\mr{i}}\,d(-1)\,z(z-1)}{2\,d(z)} \big(P^q_{0\to b} + P^q_{0\to c}\big) \\
& + \dfrac{d(1)\,z(z-1)}{2\,d(z)} \big(P^q_{b\to a} + P^q_{c\to a} \big) + \frac{f_1(z)}{2\,d(z)}\otimes P^q_{(1,1,1)} + \frac{f_0(z)}{2\,d(z)}\otimes P^q_{(0,0,0)}\ ,
}
where the matrices $f_{1}(z)$ and $f_{0}(z)$ are as follows:
$$f_{1}(z)=\begin{bmatrix}
 d(1)\,z(z+1) & (z-1)(2z^2-\B\,z+2)
\vspace{0.25cm}\\
(z-1)(2z^2-\B\,z+2) & d(1)\,z(z+1)
\end{bmatrix}\ ,$$
\medskip
$$f_{0}(z)=\begin{bmatrix} (z-1)(2z^2-\B\,z+2) & \big([2]_{\A_1}^{\mr{i}}\big)^2\,d(1)\,z(z+1) \vspace{0.25cm} \\ \dfrac{d(1)}{([2]_{\A_1}^{\mr{i}})^2}\,z(z+1) & (z-1)(2z^2-\B\,z+2) \end{bmatrix}\ .$$

\medskip

Here $\B=d(-1)-4$.
\p{
We use the equation $$\check{R}(z)\,\Delta(F_0)=\Delta(F_0)\,\check{R}(z):\tl L_{(1,1,1)}(z)\otimes\tl{L}_{(1,1,1)}(1)\to \tl L_{(1,1,1)}(1)\otimes\tl{L}_{(1,1,1)}(z)\ ,$$ with $F_0$ in \eqref{D18 F0 action}. Equivalently, one can use $E_0$ in \eqref{D18 E0 action}.
}
\end{thm}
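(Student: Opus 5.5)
Since $\check{R}(z)$ intertwines $\tl L_{(1,1,1)}(z)\otimes\tl L_{(1,1,1)}$ and $\tl L_{(1,1,1)}\otimes\tl L_{(1,1,1)}(z)$, it is in particular $U_q$D$_{2\vert 1;\alpha}$-linear: the subalgebra generated by $E_i,F_i,K_i^{\pm1/2}$, $i=1,2,3$, is untouched by $\tau_z$ and acts through the same coproduct on both sides. By the last part of Theorem \ref{square D thm}, $\check{R}(z)$ therefore lies in the $17$-dimensional space spanned by $P^q_W$, the three projectors $P^q_{(3,1,1)},P^q_{(1,3,1)},P^q_{(1,1,3)}$, the three maps $P^q_{0\to a},P^q_{0\to b},P^q_{0\to c}$, the two maps $P^q_{b\to a},P^q_{c\to a}$, and the $2\times2$ matrix blocks $\otimes P^q_{(1,1,1)}$ and $\otimes P^q_{(0,0,0)}$ (four entries each). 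The count is $1+3+3+2+4+4=17$. The one nontrivial endomorphism of $W_a$ is $P^q_{0\to a}$. I would write $\check{R}(z)$ as a combination of these $17$ maps with unknown coefficient functions and impose the normalization $\check{R}(z)(v_1\otimes v_1)=v_1\otimes v_1$, which fixes the coefficient of $P^q_W$ to be $1$.

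Next I would impose the one remaining affine relation $\check{R}(z)\,\Delta(F_0)=\Delta(F_0)\,\check{R}(z)$. Here $F_0$ acts by \eqref{D18 F0 action}, carrying the factor $z^{-1}$ on the first tensor factor of the source and on the second factor of the target. Commutation with $K_0$ is automatic, because $K_0$ is determined by $K_1,K_2,K_3$ through $K_0^{a_0}\cdots K_3^{a_3}=1$. Commutation with $E_0$ should then follow, either by the symmetric argument or as a check. To get scalar equations I would not work with full $324\times324$ matrices. Instead I would apply both sides to a small set of test vectors whose images under the $U_q$D$_{2\vert1;\alpha}$-maps are explicitly known: the singular vectors $u_1,\dots,u_{6b}$ and the generator $u_0$ of $W_a$, together with a few low vectors such as $v_{\ol 1}\otimes v_{\ol 1}$ and $v_{\ol 1}\otimes v_{18}$. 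Note that $\Delta(F_0)$ raises weight in the finite part, since $F_0$ acts like $E_{-\theta}$ with $-\theta$ negative. Applied to such vectors it produces vectors lying in only a few isotypic components, so comparing their components gives linear equations in the unknown coefficients, with coefficients in $\C(z,q^{\alpha_i})$.

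The equations will decouple by block. For each of $L_{(3,1,1)}$, $L_{(1,3,1)}$, $L_{(1,1,3)}$, the coefficient relative to $P^q_W$ comes from a single $2$-term relation, in the same way as for the even case in \cite{DM25a}. This should give the ratio $-q_i^2(1-q_i^{-2}z)/(1-q_i^2z)$. The $17$-dimensional and $1$-dimensional isotypic pieces are coupled through $P^q_{0\to\cdot}$ and $P^q_{\cdot\to a}$, and they produce a linear system with the denominator $d(z)=\prod_i(1-q_i^2z)$. I would solve that system for the entries of $f_1,f_0$ and for the mixing coefficients.

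The main obstacle is this coupled multiplicity block. The vector $u_0$ and the maps $P^q_{0\to x}$, $P^q_{x\to a}$ have to be tracked exactly, and $W_a$ is not semisimple, so $\Delta(F_0)$ mixes $u_0$ with $u_{5a},u_{5b},u_{5c}$ and with the trivial vectors $u_{6a},u_{6b}$. In practice this step has to be carried out by computer algebra, as the paper does elsewhere. I would first solve for the coefficients at generic numerical values of $\alpha_1,\alpha_2$, then fit rational functions of $z$ of degree at most $3$ over $d(z)$, and finally confirm the closed forms symbolically. As a final check I would verify that the solution space is one-dimensional, so that the normalization determines $\check{R}(z)$, and that $\check{R}(1)=\id$. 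At $z=1$ all terms containing $(z-1)$ vanish, and $f_1(1)$, $f_0(1)$ together with the projector coefficients reduce to identity blocks.
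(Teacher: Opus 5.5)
Your route is the paper's: expand $\check R(z)$ in the $17$ $U_q$D$_{2\vert 1;\alpha}$-linear maps, fix the coefficient of $P^q_W$ by the normalization on $u_1=v_1\otimes v_1$, and solve by computer the linear system coming from $\check R(z)\,\Delta(F_0)=\Delta(F_0)\,\check R(z)$, with $E_0$ as a cross-check. One step of your plan fails, however: the final sanity check $\check R(1)=\id$ is false for this module.

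At $z=1$ the formula does the following:
\begin{itemize}
\item it gives coefficient $1$ for each of the three projectors $P^q_{(3,1,1)},P^q_{(1,3,1)},P^q_{(1,1,3)}$;
\item it kills every term containing $z-1$;
\item it gives $f_1(1)/(2d(1))=\id$.
\end{itemize}
The trivial block, however, is not the identity:
\[
\frac{f_0(1)}{2\,d(1)}=\begin{bmatrix} 0 & \big([2]^{\mr{i}}_{\A_1}\big)^2\\ \big([2]^{\mr{i}}_{\A_1}\big)^{-2} & 0\end{bmatrix}.
\]
So $\check R(1)$ interchanges, up to scalars, the two singular vectors of weight $(0,0,0)$, and it has eigenvalue $-1$ on a one-dimensional subspace. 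The remark following the theorem in the paper says exactly this.

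Your expectation implicitly uses Schur's lemma, which would need $\tl L_{(1,1,1)}\otimes\tl L_{(1,1,1)}$ to be irreducible. It is not: at $z=1$ the paper finds a one-dimensional and a $323$-dimensional submodule. Hence $\tl L_{(1,1,1)}$ is not real, and $\check R(1)$ is a non-scalar endomorphism.

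As written, your check would reject the correct answer. If you imposed $\check R(1)=\id$ as an extra equation, the system would be inconsistent. A valid replacement is $\check R(z^{-1})\check R(z)=\id$, which follows from generic irreducibility together with the normalization. At $z=1$ it gives only $\check R(1)^2=\id$.
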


\begin{rmk}
The operator $f_1(z)$ above shows the action of $\check{R}(z)$ on the two modules $L_{(1,1,1)}$ that appear as direct summands in \eqref{D tensor2}. If one incorporates in $f_1(z)$ the action of $\check{R}(z)$ on all modules $L_{(1,1,1)}$ appearing in \eqref{D tensor2}, then it can be organized as a $4\times 4$ matrix given by 
$$\tl f_{1}(z)=\frac{1}{2\,d(z)}\begin{bmatrix}
 1 & 0 & 0 & 0
\vspace{0.25cm} \\
-[2]_{\A_1}^{\mr{i}}(z-1)(2z^2-d(1)\,z-2) & 1 & d(1)\,z(z-1) & d(1)\,z(z-1)
\vspace{0.25cm} \\
-[2]_{\A_1}^{\mr{i}}d(-1)\,z(z-1) & 0 & d(1)\,z(z+1) & (z-1)(2z^2-\B\,z+2)
\vspace{0.25cm}\\
-[2]_{\A_1}^{\mr{i}}d(-1)\,z(z-1) & 0 & (z-1)(2z^2-\B\,z+2) & d(1)\,z(z+1)
\end{bmatrix}\ ,$$
where the chosen ordered basis is $\{u_0,u_{5a},u_{5b},u_{5c}\}$. Note that $u_0$ corresponds to a quotient module and therefore can be modified by adding any multiple of $u_{5a}$. Such a change does not change the  matrix  $\tilde f_1(z)$. However, the projection to the $17\times 4$-dimensional subspace corresponding to the four copies of $L_{(1,1,1)}$ is not a $U_q$D$_{2\vert 1;\alpha}$ homomorphism.
\end{rmk}

\bigskip 

The tensor product $\tl{L}_{(1,1,1)}(z)\otimes\tl{L}_{(1,1,1)}(1)$ is thin and therefore irreducible, except for some special values of $z$. These special values of $z$, and the corresponding submodules, are listed below. Note that the submodules for $z=q^k$, are quotient modules for $z=q^{-k}$.
\begin{enumerate}
    \item $z=q_1^2$: We have a $66$-dimensional submodule isomorphic to $L_{(3,1,1)}\oplus L_{(1,1,1)}\oplus L_{(0,0,0)}$ as a $U_q$D$_{2\vert 1;\alpha}$-module, and generated by $u_2$.
    \item $z=q_1^{-2}$: We have a $258$-dimensional submodule isomorphic to $W_a\oplus L_{(1,3,1)}\oplus L_{(1,1,3)}\oplus L_{(1,1,1)}\oplus L_{(0,0,0)}$ as a $U_q$D$_{2\vert 1;\alpha}$-module, and generated by $u_1$.
    \item $z=q_2^{2}$: We have a $66$-dimensional submodule isomorphic to $L_{(1,3,1)}\oplus L_{(1,1,1)}\oplus L_{(0,0,0)}$ as a $U_q$D$_{2\vert 1;\alpha}$-module, and generated by $u_3$.
    \item $z=q_2^{-2}$: We have a $258$-dimensional submodule isomorphic to $W_a\oplus L_{(3,1,1)}\oplus L_{(1,1,3)}\oplus L_{(1,1,1)}\oplus L_{(0,0,0)}$ as a $U_q$D$_{2\vert 1;\alpha}$-module, and generated by $u_1$.
    \item $z=q_3^{2}$: We have a $66$-dimensional submodule isomorphic to $L_{(1,1,3)}\oplus L_{(1,1,1)}\oplus L_{(0,0,0)}$ as a $U_q$D$_{2\vert 1;\alpha}$-module, and generated by $u_4$.
    \item $z=q_3^{-2}$: We have a $258$-dimensional submodule isomorphic to $W_a\oplus L_{(3,1,1)}\oplus L_{(1,3,1)}\oplus L_{(1,1,1)}\oplus L_{(0,0,0)}$ as a $U_q$D$_{2\vert 1;\alpha}$-module, and generated by $u_1$.
    \item $z=1$: The tensor product is completely reducible with a $1$-dimensional submodule generated by $u_{6a}-u_{6b}$, and a $323$-dimensional submodule generated by $u_1$.
\end{enumerate}

\begin{rmk}
    We note that $\check{R}(1)\ne\id$ in this case. More precisely, $\check{R}(1)$ acts as identity on all summands (equivalently, on all singular vectors) in \eqref{D tensor2}, except the last summand of multiplicity two, where it swaps the two singular vectors of weight $(0,0,0)$.  In particular, the module $(\tl{L}_{(1,1,1)}(z))^{\otimes 2}$ is reducible and, thus, the module $\tl{L}_{(1,1,1)}(z)$ is an imaginary module.
\end{rmk}

We consider the $q\to 1$ rational limit. With our normalization of the singular vectors $u_1,\dots, u_{6b}$, the limit of these vectors exists, and therefore the limit of the projectors and of the $17$ chosen $U_q$D$_{2\vert 1;\A}$-linear maps exists. Let $P_W$, $P_{(a_1,a_2,a_3)}$ be the  $q\to 1$  limit of the $U_q$D$_{2\vert 1;\alpha}$-projectors $P_W^q$, $P_{(a_1,a_2,a_3)}^q$ respectively, and let $P_{x\to y}$ be the $q\to 1$ limit of the $U_q$D$_{2\vert 1;\alpha}$-linear maps $P_{x\to y}^q$ ($x=0,b,c$ and $y=a,b,c$), appearing in the expression \eqref{thm:RqD21x} of $\check{R}(z)$.

\begin{cor}
    Let $\bar{d}(u)=(\A_1+u)(\A_2+u)(\A_3+u)$. The rational $R$-matrix $\check{R}(u)$ for the Yangian of D$_{2\vert 1;\alpha}$ is given by 
    \eq{\label{superD:raional R matrix}
    \check{R}(u)=\ & P_W+\frac{\alpha_1-u}{\alpha_1+u}P_{(3,1,1)}+\frac{\alpha_2-u}{\alpha_2+u}P_{(1,3,1)}+\frac{\alpha_3-u}{\alpha_3+u}P_{(1,1,3)} + \frac{2\A_1 u^2}{\bar d(u)}P_{0\to a} \\
    & + \frac{4\A_1^2 u}{\bar d(u)}\big(P_{0\to b} + P_{0\to c}\big) + \frac{\A_1\A_2\A_3}{2\,\bar d(u)}\big(P_{b\to a}+P_{c\to a}\big) + \frac{\bar f_1(u)}{2\,\bar d(u)}\otimes P_{(1,1,1)}+ \frac{\bar f_2(u)}{2\,\bar d(u)}\otimes P_{(0,0,0)}\ ,
    }
    where the matrices $\bar f_1(u)$ and $\bar f_0(u)$ are as follows:
    $$\bar f_{1}(u)=\begin{bmatrix}
     2\,\A_1\A_2\A_3 & u(\A_1^2+\A_2^2+\A_3^3-2u^2)
    \vspace{0.25cm}\\
    u(\A_1^2+\A_2^2+\A_3^3-2u^2) & 2\,\A_1\A_2\A_3
    \end{bmatrix}\ ,$$
    \medskip
    $$\bar f_{0}(u)=\begin{bmatrix} u(\A_1^2+\A_2^2+\A_3^3-2u^2) & 2\,\A_1\A_2\A_3 \vspace{0.25cm}\\ 2\,\A_1\A_2\A_3 & u(\A_1^2+\A_2^2+\A_3^3-2u^2) \end{bmatrix}\ .$$
\p{
We substitute $z=q^{2u}$ in \eqref{thm:RqD21x} and take an appropriate $q\to 1$ limit. Apriori, the $q\to 1$ limit of $\check{R}(q^{2u})$ does not exist. To have a limit we conjugate $\check{R}(q^{2u})$ by $A\otimes A$ where the operator $A:V\to V$ is given by $A\,v_i=v_i$ for $1\le i\le 17$ and $A\,v_{18}=\frac{1}{[2]_{\A_1}^{\mr{i}}}v_{18}$. Thus, we obtain $\check{R}(u)=\lim_{q\to 1}(A\otimes A)\check{R}(q^{2u})(A\otimes A)^{-1}$ as above in \eqref{superD:raional R matrix}. Note that conjugation by an operator of the form $A\otimes A$ preserves the quantum Yang-Baxter equation and, for our choice of  $A$, $A\otimes A$ commutes with action of  $U_q{\text{D}}_{2|1;\A}$ but not with action of $U_q\tl{\text{D}}_{2|1;\A}$.
}
\end{cor}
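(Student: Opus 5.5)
We obtain the corollary directly from Theorem \ref{superD:Rcheck}. We substitute $z=q^{2u}$ and expand every coefficient to leading order in $h=\log q\to 0$. The basic expansions are
\[
1-q_i^{\pm 2}z = -2h(u\pm\A_i)+O(h^2), \qquad z-1=2hu+O(h^2),
\]
which give
\[
d(q^{2u})=-8h^3\,\bar d(u)+O(h^4).
\]
We also need the values at $q=1$: $d(1)\to 0$ at order $h^3$ via $\prod(1-q_i^2)\approx -8h^3\A_1\A_2\A_3$, while $d(-1)\to 8$, $\B\to 4$, $[2]^{\mr i}_{\A_1}\approx 2h\A_1$ and $z+1\to 2$.

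\textbf{Diagonal terms.} For $P_{(3,1,1)}$ and its analogues, the expansion above turns $-q_i^2(1-q_i^{-2}z)/(1-q_i^2z)$ into $(\A_i-u)/(\A_i+u)$, as stated.

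\textbf{Off-diagonal terms.} Here we count orders in $h$ and rely on the cancellations.
\begin{itemize}
\item $P_{0\to a}$: the combination $2z^2-d(1)z-2$ at $z=q^{2u}$ is $8hu+O(h^2)$. Together with $[2]^{\mr i}_{\A_1}\sim 2h\A_1$ and $z-1\sim 2hu$, the numerator is $-(2h\A_1)(2hu)(8hu)$, of order $h^3$. This is the same order as $2d(z)$, so the coefficient tends to $2\A_1u^2/\bar d(u)$.
\item $P_{0\to b},P_{0\to c}$: the coefficient gives $(2h\A_1)\cdot 8\cdot 2hu/(16h^3\bar d)$, which diverges like $1/h$.
\item $P_{b\to a},P_{c\to a}$: the coefficient gives $(-8h^3\A_1\A_2\A_3)(2hu)/(-16h^3\bar d)$, which vanishes.
\end{itemize}
These two mismatches come from the normalization of $v_{18}$: the $L_{(0,0,0)}$ summand carries an extra $[2]^{\mr i}_{\A_1}$. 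Following the paper's suggestion, we conjugate by $A\otimes A$ with $Av_{18}=v_{18}/[2]^{\mr i}_{\A_1}$. Since $A\otimes A$ commutes with $U_q$D$_{2|1;\A}$, conjugation keeps $\check R$ inside the span of the 17 maps and only rescales each map by a power of $[2]^{\mr i}_{\A_1}$. That power is the number of $v_{18}$ factors in the source singular vector minus the number in the target.

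We track these powers from the explicit vectors. The vectors $u_{5b},u_{5c}$ contain one factor $v_{18}$, $u_{6b}=v_{18}\otimes v_{18}$ contains two, and $u_0,u_{5a},u_{6a}$ contain none. The resulting rescalings are:
\begin{itemize}
\item $P_{0\to b},P_{0\to c}$ are multiplied by $[2]^{\mr i}_{\A_1}\sim 2h\A_1$, which cures the $1/h$ divergence and gives $4\A_1^2u/\bar d$ up to the sign bookkeeping.
\item $P_{b\to a},P_{c\to a}$ are divided by $[2]^{\mr i}_{\A_1}$, which gives a finite limit.
\item In $f_0$, the off-diagonal entries are multiplied by $([2]^{\mr i}_{\A_1})^{\mp 2}$. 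This exactly cancels the explicit factors $([2]^{\mr i}_{\A_1})^{\pm2}$ and leaves $d(1)z(z+1)/(2d(z))\to 2\A_1\A_2\A_3/(2\bar d)$.
\item The diagonal entries $(z-1)(2z^2-\B z+2)$ are computed as follows. Since $d(-1)=(1+q_1^2)(1+q_2^2)(1+q_3^2)$ with $\sum\A_i=0$, we get $\B=4+4h^2\sum\A_i^2+O(h^3)$. Then $2z^2-\B z+2=2(z-1)^2-(\B-4)z$ expands to $8h^2u^2-4h^2\sum\A_i^2$. Multiplying by $z-1\sim 2hu$ gives $-8h^3u(\sum\A_i^2-2u^2)$, and after dividing by $-16h^3\bar d$ we obtain the stated entries $u(\sum\A_i^2-2u^2)/(2\bar d)$.
\item $f_1$ needs no conjugation correction: its entries involve only $u_{5b},u_{5c}$, which carry the same power of $A$.
\end{itemize}

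The main obstacle is getting the exact powers of $[2]^{\mr i}_{\A_1}$ that the conjugation attaches to each of the 17 maps, together with the sign conventions of the super tensor product. The pole-order bookkeeping is routine once those powers are fixed, but the off-diagonal entries cancel only if they are right. I would verify them by evaluating each map on $u_0,u_{5b},u_{5c},u_{6b}$ before and after conjugation. As a consistency check I would confirm that the limit at $u=0$ reproduces the $q\to1$ limit of $\check R(1)$ described in the remark. Finally, note that $\bar f_0$ appears as $\bar f_2$ in the displayed formula and $\A_3^3$ should read $\A_3^2$; I would adopt the corrected reading.
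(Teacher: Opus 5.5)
Your route is the paper's own: substitute $z=q^{2u}$, count orders in $h=\log q$, and conjugate by $A\otimes A$. Your expansions of the diagonal terms, of $P_{0\to a}$, of $f_1$ and of the diagonal of $f_0$ are correct. The problems lie in the conjugation step.

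\textbf{The direction of the rescaling.} The rule you state, ``source minus target'', is exactly what $(A\otimes A)\phi(A\otimes A)^{-1}$ gives with $Av_{18}=v_{18}/[2]^{\mr{i}}_{\A_1}$. A map from a summand with $s$ tensor factors $v_{18}$ to one with $t$ such factors is multiplied by $([2]^{\mr{i}}_{\A_1})^{s-t}$. Under that rule, $P_{0\to b}$ has $s=0$, $t=1$, so it is multiplied by $1/[2]^{\mr{i}}_{\A_1}$. Its coefficient is already of order $h^{-1}$, so it would diverge like $h^{-2}$. In your bullets you silently use the opposite power $t-s$:
\begin{itemize}
\item $[2]^{\mr{i}}_{\A_1}$ for $P_{0\to b}$;
\item $1/[2]^{\mr{i}}_{\A_1}$ for $P_{b\to a}$;
\item cancellation of the off-diagonal entries of $f_0$ in the basis $(u_{6a},u_{6b})$.
\end{itemize}
These factors correspond to the limit of $(A\otimes A)^{-1}\check R(q^{2u})(A\otimes A)$, equivalently $Av_{18}=[2]^{\mr{i}}_{\A_1}v_{18}$. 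You need to state and use this direction explicitly; the conjugation as printed in the corollary's proof has the same reversal. With it, your values $4\A_1^2u/\bar d(u)$ for $P_{0\to b}$ and $\A_1\A_2\A_3/\bar d(u)$ for the off-diagonal of $f_0$ are correct.

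\textbf{The $P_{b\to a}$, $P_{c\to a}$ coefficient.} You never evaluate it; you only note that it is finite. Doing so gives
\[
\frac{d(1)\,z(z-1)}{2\,d(z)}\cdot\frac{1}{[2]^{\mr{i}}_{\A_1}}\;\longrightarrow\;\frac{\A_1\A_2\A_3}{2\bar d(u)}\cdot\frac{u}{\A_1}=\frac{\A_2\A_3\,u}{2\bar d(u)}\ ,
\]
not the displayed $\A_1\A_2\A_3/(2\bar d(u))$. With the literal conjugation the coefficient tends to $0$ instead.

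The displayed value cannot arise from any rescaling of $v_{18}$. For every $q$ the $q$-coefficient carries the factor $z-1$, consistent with the remark that $\check R(1)$ fixes every singular vector except the swapped weight-zero pair. Conjugation by a $q$-dependent $A\otimes A$ preserves this, so the rational coefficient must vanish at $u=0$. But $\A_1\A_2\A_3/(2\bar d(0))=1/2$. Your own proposed $u=0$ consistency check would expose this.

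So, besides the corrections $\bar f_2\to\bar f_0$ and $\A_3^3\to\A_3^2$, this coefficient must also be corrected. As written, your argument claims agreement with a term it does not, and cannot, reproduce.
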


\comment{
We consider the Dynkin diagram shown in Figure \ref{fig:D bosonic dynkin}.

\begin{figure}[ht!]
    \centering
    \begin{tikzpicture}[scale=0.9]
        \tikzmath{\a = 2; \b = 0.025; \e=0.1;}
        \node at (0,\a/1.732) {$\textcolor{blue}{\otimes}$};
        \node[circle, draw=red, thick, inner sep = 0pt, minimum size=5.5pt] at (-\b,\a*1.732) {};
        \node[circle, draw=black, thick, inner sep = 0pt, minimum size=5.5pt] at (-\a,0) {};
        \node[circle, draw=mygreen, thick, inner sep = 0pt, minimum size=5.5pt] at (\a,0) {};
        \node at (0.5*\a,0.4*\a) {$-x$};
        \node at (-0.5*\a-0.1,0.4*\a) {$-1$};
        \node [rotate=90] at (-0.25,\a+0.3) {$1+x$};
        \node at (\a, -0.3) {$3$};
        \node at (-\a, -0.3) {$0$};
        \node at (0, \a/1.732-0.3) {$1$};
        \node at (-\b, \a*1.732+0.3) {$2$};
        \draw[-](0+\e,\a/1.732-\e/1.732) -- (\a-\e,0+\e/1.732);
        \draw[-](0-\e,\a/1.732-\e/1.732) -- (-\a+\e,0+\e/1.732);
        \draw[-] (-\b,\a*1.732-\e) -- (0,\a/1.732+\e);
    \end{tikzpicture}
    \caption{Dynkin diagram for D$_{2|1;x}^{(1)}$}
    \label{fig:D bosonic dynkin}
\end{figure}

The adjoint $U_q$D$_{2|1;x}$-module is shown in figure \ref{fig: D adjoint bosonic}. 

\begin{figure}[ht!]
    \centering
\dia{
\& \& v_1(-1,0,0)\ar[ld,blue] \\
\& v_2(-1,1,1)\ar[d,red]\ar[rd,mygreen] \& \\
\& v_4(x,-1,1)\ar[rd,mygreen]\ar[ld,blue] \& v_3(-1-x,1,-1)\ar[d,red]\ar[ld,blue] \\
v_7(x,0,2)\ar[rrd,mygreen] \& v_6(-1-x,2,0)\ar[d,red] \& v_5(0,-1,-1)\ar[lld,blue] \\
v_8(0,0,0)\ar[rd,red]\ar[rrd,mygreen] \& v_9(0,0,0)\ar
[d,red,"2_{1+x}",pos=0.7]\ar[ld,blue,"1+x"',pos=0.8] \& v_{10}(0,0,0)\ar[lld,blue,"-x",pos=0.9]\ar[d,mygreen] \\
v_{\ol{5}}(0,1,1)\ar[d,red]\ar[rd,mygreen] \& v_{\ol{6}}(1+x,-2,0)\ar[ld,blue,"1+x"',pos=0.8] \& v_{\ol{7}}(-x,0,-2)\ar[ld,blue,"-x"] \\
v_{\ol{3}}(1+x,-1,1)\ar[rd,mygreen] \& v_{\ol{4}}(-x,1,-1)\ar[d,red] \& \\
\& v_{\ol{2}}(1,-1,-1)\ar[ld,blue] \& \\
v_{\ol{1}}(1,0,0) \& \&
}
    \caption{The adjoint module for $U_q$D$_{2|1;x}$ w.r.t. distinguished root sytem}
    \label{fig: D adjoint bosonic}
\end{figure}
}

\section{Type F\texorpdfstring{$_{3\vert 1}$}{2}}
\label{super:F}

In this section, we give the expression of $\check{R}(z)$ for a $41$-dimensional representation of $U_q\tl{\text{F}}_{3\vert 1}$ in terms of projectors related to the tensor square of the $40$-dimensional representation of $U_q$F$_{3\vert 1}$. This $R$-matrix  has the form similar to the $R$-matrices of untwisted and twisted types in cases of non-trivial multiplicity.

\subsection{Quantum algebra of finite type}

We consider the distinguished Dynkin diagram where one node is fermionic and the other three bosonic, as shown below:

\begin{center}
\begin{tikzpicture}[root/.style={circle, draw, thick, minimum size=6pt, inner sep=0pt}]
    \node at (1,0) {$\textcolor{blue}{\otimes}$};
    \node[root, color=red] at (2,0) {};
    \node[root, color=mygreen] at (3,0) {};
    \node[root, color=brown] at (4,0) {};

    \draw (1.1,0) -- (1.9,0);
    \node at (2.5,0) {\scalebox{1.1}{$<$}};
    \draw (2.9,0.06)--(2.1,0.06);
    \draw (2.9,-0.06)--(2.1,-0.06);
    \draw (3.1,0) -- (3.9,0);

    \node at (1,-0.3) {1};
    \node at (2,-0.3) {2};
    \node at (3,-0.3) {3};
    \node at (4,-0.3) {4};
\end{tikzpicture}\ .
\end{center}
The corresponding Cartan matrix is given by 
$$\begin{bmatrix}
    0 & 1 & 0 & 0 \\
    -1 & 2 & -2 & 0 \\
    0 & -1 & 2 & -1 \\
    0 & 0 & -1 & 2
\end{bmatrix}\ .$$
We choose $d_1=-1, d_2=1, d_3=d_4=2$ in this case.

The Lie superalgebra F$_{3\vert 1}$ has a $24$-dimensional even part isomorphic to the direct sum $\mathfrak{sl}_2\oplus\mathfrak{so}_7$. The $16$-dimensional odd part of F$_{3\vert 1}$ is isomorphic to the tensor product of $2$-dimensional vector representation of $\mathfrak{sl}_2$ and $8$-dimensional spin representation of $\mathfrak{so}_7$.

The finite dimensional irreducible modules of F$_{3\vert 1}$ are classified (see \cite{K75}, \cite{M13} for details) by quadruples $(a,b,c,d)$ of non-negative integers such that $k=(2a-3b-4c-2d)/3$ is a non-negative integer different from $1$, and if $k=0$ then $(a,b,c,d)=(0,0,0,0)$, if  $k=0$ then $b=d=0$, if $k=3$ then $b=2d+1$. 

 We denote the irreducible $U_q$F$_{3\vert 1}$-module with highest weight $(a,b,c,d)$, by $L_{(a,b,c,d)}$. 

We construct a weighted basis $\{v_i\}_{i=1}^{40}$ for the $40$-dimensional $U_q$F$_{3\vert 1}$-module $L_{(3,0,0,0)}$ as follows. The highest weight vector $v_1$ is of weight $(3,0,0,0)$. Other vectors $v_i$ are such that the action of $U_q$F$_{3\vert 1}$ is as in Figure \ref{fig:F4 e-f action}. Figure \ref{fig:F4 e-f action} shows the action of $F_i$, $i=1,2,3,4$. We use blue for $F_1$, red for $F_2$, green for $F_3$ and brown for $F_4$, as in the Dynkin diagram nodes. Here $\overline{i}=41-i$. The coefficients are quantum numbers and if a coefficient is not shown then it is one. For example, $F_2\,v_{20}=[2]\,v_{\overline{7}}$, $F_3\,v_{21}=[2]_2\,v_{\overline{9}}$, etc. The action of $E_i$, $i=1,2,3,4$, is symmetric with respect to the action of $F_i$ and can be read from the diagram obtained by reflecting the diagram shown in Figure \ref{fig:F4 e-f action} about a horizontal line passing through the weight zero vectors $v_{19},v_{20},v_{21},v_{22}$. For example, $E_2\,v_{20}=[2]\,v_7$, $E_3\,v_{21}=[2]_2\,v_9$, etc. Specifically, when $1\le j,k\le 18$, we have $E_i\,v_j=a\,v_k$ if and only if $F_i\,v_{\overline{k}}=a\,v_{\overline{j}}$, and when $23\le j,k\le 40$ we have $E_i\,v_j=(-1)^{s_i}a\,v_k$ if and only if $F_i\,v_{\overline{k}}=a\,v_{\overline{j}}$. In the case, when $19\le j\le 22$ and $k=7,9,10,18$, we have $E_i\,v_j=a\,v_k$ if and only if $F_i\,v_j=a\,v_{\overline{k}}$, and $E_i\,v_{\ol{k}}=(-1)^{s_i}a\,v_{j}$ if and only if $F_i\,v_k=a\,v_j$. In addition, we choose the numbering of the vectors $v_1,\dots,v_{40}$ in such a way that $v_{11},\dots,v_{18}$ and $v_{\overline{18}},\dots,v_{\overline{11}}$ are vectors of odd parity, and the rest of the vectors are of even parity. 

\begin{prp}
    The basis $\{v_i\}_{i=1}^{40}$ exists.
\end{prp}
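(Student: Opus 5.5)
The claim is that the operators $E_i,F_i,K_i^{\pm1}$, $i=1,2,3,4$, given by Figure \ref{fig:F4 e-f action} together with the reflection rules stated after it, define a $U_q$F$_{3\vert 1}$-module structure on the $40$-dimensional space with basis $\{v_i\}$. As for the D$_{2\vert 1;\alpha}$ analogue, I would prove this by a direct verification of all defining relations of Definition \ref{DJ:super} restricted to $i,j\in\mr{I}$, including the higher Serre relations of \cite{Y99}. By \cite{G07} the resulting module is then the irreducible deformation $L_{(3,0,0,0)}$.

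\textbf{Step 1: set up the operators.} Each $K_i$ acts on $v_j$ by $q_i^{\lambda_i(v_j)}$, where the weight $\lambda(v_j)$ is obtained from $(3,0,0,0)$ by subtracting the columns of the Cartan matrix along the $F$-paths in the figure. A first consistency check is that every path from $v_1$ to a given $v_j$ gives the same weight. Consequently the relations $K_iE_jK_i^{-1}=q^{B_{ij}}E_j$ and $K_iF_jK_i^{-1}=q^{-B_{ij}}F_j$ hold automatically.

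\textbf{Step 2: the $E$–$F$ relations.} The relation
\[
E_iF_j-(-1)^{s_is_j}F_jE_i=\delta_{ij}\,\frac{K_i-K_i^{-1}}{q_i-q_i^{-1}}
\]
reduces, for $i\neq j$, to commutation of two arrows of different colours. For $i=j$ it reduces to a string condition. For the even nodes $2,3,4$, each colour-$i$ string must carry a $U_{q_i}\mathfrak{sl}_2$-module of the correct dimension, so the coefficients must be the products $[k]_i[n-k+1]_i$ appearing in the figure. For the odd node $1$, $E_1$ and $F_1$ must pair vectors with weight $\lambda_1\neq0$ into $2$-dimensional blocks, with $E_1F_1+F_1E_1=[\lambda_1]_{d_1}$, and must vanish on the $\lambda_1=0$ vectors. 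The reflection rules, including the signs $(-1)^{s_i}$ in the lower half, are designed so that these identities transfer from the upper half of the diagram to the lower half. The weight-zero vectors $v_{19},\dots,v_{22}$ need separate care, since the $4\times4$ blocks there are not diagonal.

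\textbf{Step 3: the Serre relations.} First check $E_1^2=F_1^2=0$, and that $E_i$ commutes with $E_j$ (up to sign) when $B_{ij}=0$. Then check the higher relations: the $q$-Serre relations of orders $2$ and $3$ for the bosonic edges $2$–$3$ (double edge) and $3$–$4$, and the odd-node relation at node $1$ from \cite{Y99}. Each of these can be tested on every basis vector by following all paths in the figure; the relations for $F$ follow from those for $E$ by the reflection symmetry.

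\textbf{Main obstacle.} The bulk of the work is mechanical, and I would do it by computer with $40\times40$ matrices over $\mathbb{Q}(q^{1/2})$. The delicate part is the weight-zero layer together with the odd vectors $v_{11},\dots,v_{18}$ and their reflections. There, parity signs from the super-commutator and the $(-1)^{s_i}$ rule interact, and a wrong sign choice would break the $i=j$ relation or the Serre relation involving node $1$. If a relation fails, I would first try rescaling the four weight-zero vectors, which changes the coefficients in the figure, before suspecting the diagram itself.
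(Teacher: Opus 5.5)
Your method, which checks every defining relation of $U_q\mathrm{F}_{3\vert 1}$ (Serre relations included) on the explicit $40\times 40$ matrices, is the same as the paper's. The paper's proof is only the statement that all relations are checked explicitly. However, one concrete criterion in your Step~2 is false. Used as a test, it would reject the correct module: $E_1$ and $F_1$ do \emph{not} vanish on the vectors with $\lambda_1=0$.

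For instance, $v_{18}$ has weight $(0,-1,0,0)$, so $\lambda_1=0$, yet the figure gives $F_1v_{18}=v_{19}$ and $F_1v_{20}=v_{\overline{18}}$. The weight-zero reflection rule then gives $E_1v_{20}=v_{18}$ and $E_1v_{\overline{18}}=(-1)^{s_1}v_{19}=-v_{19}$. So on $\operatorname{span}\{v_{20},v_{18},v_{\overline{18}},v_{19}\}$ the subalgebra generated by $E_1,F_1,K_1^{\pm1}$ acts by a $4$-dimensional indecomposable block. There the odd-node relation holds by cancellation, not by vanishing:
\[
(E_1F_1+F_1E_1)\,v_{20}=E_1v_{\overline{18}}+F_1v_{18}=-v_{19}+v_{19}=0=\frac{K_1-K_1^{-1}}{q_1-q_1^{-1}}\,v_{20}.
\]

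This is forced, not an artifact of normalization. The weight of $v_{18}$ is the weight of $E_1$, so at $q=1$ it is $e_{\alpha_1}$ in the adjoint representation. There $\operatorname{ad}f_{\alpha_1}(e_{\alpha_1})=h_{\alpha_1}\neq 0$, even though $\alpha_1(h_{\alpha_1})=0$ because $\alpha_1$ is isotropic. Your fallback of rescaling the four weight-zero vectors therefore cannot make such a test pass.

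The correct check at node $1$ is only $E_1^2=F_1^2=0$ together with $E_1F_1+F_1E_1=(K_1-K_1^{-1})/(q_1-q_1^{-1})$ as matrix identities. The splitting into $2$-dimensional blocks you describe holds only where $[\lambda_1]\neq 0$. With this correction your plan is exactly the paper's computation.

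One minor point: \cite{G07} alone does not identify your module with $L_{(3,0,0,0)}$. You also need that $v_1$ generates the module; for example, $v_{19},\dots,v_{22}$ equal $F_1v_{18},F_2v_7,F_3v_9,F_4v_{10}$. Then compare dimensions.
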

\begin{proof}
    We expicitly check all the relations of $U_q$F$_{3\vert 1}$. 
\end{proof}

After a $q\to 1$ limit, the module $L_{(3,0,0,0)}$ is the adjoint module for F$_{3\vert 1}$. In our chosen basis, up to the $q\to 1$ limit, the vectors $v_1$, $v_{19}$, $v_{\overline{1}}$ can be identified with a basis of the $\mathfrak{sl}_2$ subalgebra of the even part of F$_{3\vert 1}$, while the vectors $v_2,\dots,v_{10}$, $v_{20},v_{21},v_{22}$, $v_{\overline{10}},\dots,v_{\overline{2}}$ form a basis of the $\mathfrak{so}_7$ subalgebra of the even part. The vectors $v_{11},\dots,v_{18}$ form the spin representation of $\mathfrak{so}_7$, and so do the vectors $v_{\overline{18}},\dots, v_{\overline{11}}$. Together they form a representation of $\mathfrak{sl}_2\oplus\mathfrak{so}_7$, namely the $2$-dimensional vector representation of $\mathfrak{sl}_2$ tensor the spin representation of $\mathfrak{so}_7$. This tensor product can be identified with the odd part of the superalgebra F$_{3\vert 1}$.

\begin{figure}[ht!]
    \centering
\ddia{row sep=12pt, column sep=-1pt, font=\footnotesize}{
\& \& v_1(3,0,0,0)\ar[ld,blue,"{[3]}"] \& \& \& \\
\& v_{11}(3,1,0,0)\ar[d,red] \& \& \& \& \\
\&  v_{12}(2,-1,1,0)\ar[rd,mygreen]\ar[ld,blue,"{[2]}"] \& \& \& \& \\
v_2(2,0,1,0)\ar[rd,mygreen] \& \& v_{13}(2,1,-1,1)\ar[d,red]\ar[rd,brown]\ar[ld,blue,"{[2]}"] \& \& \& \\
\& v_3(2,2,-1,1)\ar[d,red]\ar[rd,brown] \& v_{15}(1,-1,0,1)\ar[rd,brown]\ar[ld,blue] \& v_{14}(2,1,0,-1)\ar[d,red]\ar[ld,blue,"{[2]}",pos=0.75] \& \& \\
\& v_5(1,0,0,1)\ar[d,red,"{[2]}",pos=0.25]\ar[rd,brown] \& v_4(2,2,0,-1)\ar[d,red] \& v_{16}(1,-1,1,-1)\ar[rd,mygreen]\ar[ld,blue] \& \& \\
\& v_8(0,-2,1,1)\ar[d,mygreen]\ar[rd,brown] \& v_6(1,0,1,-1)\ar[d,red,"{[2]}",pos=0.25]\ar[rd,mygreen] \& \& v_{17}(1,1,-1,0)\ar[d,red]\ar[ld,blue] \& \\
\& v_{10}(0,0,-1,2)\ar[rrrd,brown] \& v_9(0,-2,2,-1)\ar[rd,mygreen] \& v_7(1,2,-1,0)\ar[ld,red] \& v_{18}(0,-1,0,0)\ar[llld,blue] \& \\
\& v_{19}(0,0,0,0)\ar[rd,red] \& v_{20}(0,0,0,0)\ar[ld,blue]\ar[d,red,"{[2]}",pos=0.25]\ar[rd,mygreen,"2",pos=0.2] \& v_{21}(0,0,0,0)\ar[ld,red]\ar[d,mygreen,"{[2]_2}",pos=0.25]\ar[rd,brown] \& v_{22}(0,0,0,0)\ar[ld,mygreen]\ar[d,brown,"{[2]_2}",pos=0.25] \& \\
\& v_{\overline{18}}(0,1,0,0)\ar[d,red] \& v_{\overline{7}}(-1,-2,1,0)\ar[rd,mygreen]\ar[ld,blue] \& v_{\overline{9}}(0,2,-2,1)\ar[d,red]\ar[rd,brown] \& v_{\overline{10}}(0,0,1,-2)\ar[d,mygreen] \& \\
\& v_{\overline{17}}(-1,-1,1,0)\ar[rd,mygreen] \& \& v_{\overline{6}}(-1,0,-1,1)\ar[ld,blue]\ar[d,red,"{[2]}",pos=0.25]\ar[rd,brown] \& v_{\overline{8}}(0,2,-1,-1)\ar[d,red] \& \\
\& \& v_{\overline{16}}(-1,1,-1,1)\ar[d,red]\ar[rd,brown] \& v_{\overline{4}}(-2,-2,0,1)\ar[ld,blue]\ar[rd,brown] \& v_{\overline{5}}(-1,0,0,-1) \ar[ld,blue]\ar[d,red,"{[2]}",pos=0.25] \& \\
\& \& v_{\overline{14}}(-2,-1,0,1)\ar[rd,brown] \& v_{\overline{15}}(-1,1,0,-1)\ar[d,red] \& v_{\overline{3}}(-2,-2,1,-1)\ar[ld,blue]\ar[rd,mygreen] \& \\
\& \& \& v_{\overline{13}}(-2,-1,1,-1)\ar[rd,mygreen] \& \& v_{\overline{2}}(-2,0,-1,0)\ar[ld,blue] \\
\& \& \& \& v_{\overline{12}}(-2,1,-1,0)\ar[d,red] \& \\
\& \& \& \& v_{\overline{11}}(-3,-1,0,0)\ar[ld,blue] \& \\
\& \& \& v_{\overline{1}}(-3,0,0,0) \& \& 
}
\caption{The $40$-dimensional quantized adjoint module for $U_q$F$_{3\vert 1}$}
    \label{fig:F4 e-f action}
\end{figure}

\subsection{Quantum affine algebra and \texorpdfstring{$q$}--characters}

We consider the distinguished Dynkin diagram where one node is fermionic and the affine node attaches to this fermionic node, as shown below:

\begin{center}
\begin{tikzpicture}[root/.style={circle, draw, thick, minimum size=6pt, inner sep=0pt}]
    \node[root] at (0,0) {};
    \node at (1,0) {$\textcolor{blue}{\otimes}$};
    \node[root, color=red] at (2,0) {};
    \node[root, color=mygreen] at (3,0) {};
    \node[root, color=brown] at (4,0) {};

    \draw (0.1,0) -- (0.9,0);
    \draw (0.05,0.08) -- (0.95,0.08);
    \draw (0.05,-0.08) -- (0.95,-0.08);
    \draw (1.1,0) -- (1.9,0);
    \node at (2.5,0) {\scalebox{1.1}{$<$}};
    \draw (2.9,0.06)--(2.1,0.06);
    \draw (2.9,-0.06)--(2.1,-0.06);
    \draw (3.1,0) -- (3.9,0);

    \node at (0,-0.3) {0};
    \node at (1,-0.3) {1};
    \node at (2,-0.3) {2};
    \node at (3,-0.3) {3};
    \node at (4,-0.3) {4};
\end{tikzpicture}\ .
\end{center}
The corresponding Cartan matrix is given by 
$$\begin{bmatrix}
    2 & -1 & 0 & 0 & 0 \\
    -3 & 0 & 1 & 0 & 0 \\
    0 & -1 & 2 & -2 & 0 \\
    0 & 0 & -1 & 2 & -1 \\
    0 & 0 & 0 & -1 & 2
\end{bmatrix}\ .$$
We choose $d_0=-3, d_1=-1, d_2=1, d_3=d_4=2$ in this case.

The affine roots,  see \eqref{affine roots}, are given by
\bee{
A_{1,a}=\mr{2}_a^{-1}\ ,\quad A_{2,a}=\mr{1}_{q^{-1}a}^{qa}\mr{2}_{qa}\mr{2}_{q^{-1}a}\mr{3}_a^{-1}\ ,\quad A_{3,a}=\mr{2}_{qa}^{-1}\mr{2}_{q^{-1}a}^{-1}\mr{3}_{q^2a}\mr{3}_{q^{-2}a}\mr{4}_a^{-1}\ ,\quad A_{4,a}=\mr{3}_a^{-1}\mr{4}_{q^2a}\mr{4}_{q^{-2}a}\ .
}
Here $\mr{1}_b^a$, $\mr{2}_a$, $\mr{3}_a$, $\mr{4}_a$, $a,b\in\C^\times$, are quadruples of rational functions given by
\bee{
\mr{1}_b^a=\bigg(\sqrt{a/b}\frac{1-z/a}{1-z/b},1,1,1\bigg)\ ,\quad \mr{2}_a=\bigg(1,q^{-1}\frac{1-qz/a}{1-q^{-1}z/a},1,1\bigg)\ ,\\ 
\mr{3}_a=\bigg(1,1,q^{-2}\frac{1-q^{2}z/a}{1-q^{-2}z/a},1\bigg)\ ,\quad \mr{4}_a=\bigg(1,1,1,q^{-2}\frac{1-q^{2}z/a}{1-q^{-2}z/a}\bigg)\ .
}

We construct a $41$-dimensional $U_q\tl{\text{F}}_{3\vert 1}$-module  $\tl{L}_{(3,0,0,0)}$.
The diagram in Figure \ref{fig:F4 affine e-f action} shows the $q$-character of $\tl{L}_{(3,0,0,0)}$. Here we denote $\mr{1}_{q^b}^{q^a}$ by $\mr{1}_b^a$ and $\mr{i}_{q^a}$ by $\mr{i}_a$ for $i=2,3,4$. We omit writing the shift of the affine roots on the corresponding arrows, but it is clear from the corresponding $\ell$-weights. The supplement for this module is given as follows. The constants of the action of $X_i^-(z)$ are shown in Figure \ref{fig:F4 affine e-f action}. Here all the numbers on arrows are quantum numbers, and if a number is not shown on an arrow then it is one. As before, we have $\ol{i}=41-i$. The constants of action of $X_i^+(z)$ can be obtained from those of $X_i^-(z)$ shown in the diagram in Figure \ref{fig:F4 affine e-f action}. Specifically,  when $1\le j,k\le 18$, we have $X_i^+(z)\,\tl v_j=a\,\delta(z/q^s)\,\tl v_k$ if and only if $X_i^-(z)\,\tl v_{\overline{k}}=a\,\delta(z/q^s)\,\tl v_{\overline{j}}$, and when $23\le j,k\le 40$ we have $X_i^+(z)\,\tl v_j=(-1)^{s_i}a\,\D(z/q^s)\,\tl v_k$ if and only if $X_i^-(z)\,\tl v_{\overline{k}}=a\,\D(z/q^s)\,\tl v_{\overline{j}}$. In the case when $j=19,20,21,22,41$ and $k=7,9,10,18$, we have $X_i^+(z)\,\tl v_j=a\,\D(z/q^s)\,\tl v_k$ if and only if $X_i^-(z)\,\tl v_j=a\,\D(z/q^s)\,\tl v_{\overline{k}}$, and $X_i^+(z)\,\tl v_{\ol{k}}=a\,\D(z/q^s)\,\tl v_{j}$ if and only if $X_i^-(z)\,\tl v_k=a\,\D(z/q^s)\,\tl v_j$.
\begin{figure}[ht!]
    \centering
\ddia{row sep=10pt, column sep=-1pt}{
\& \& \tl{v}_1\big(\mr{1}_0^6\big) \ar[ld,blue,"{[3]}"] \& \& \& \\
\& \tl{v}_{11}\big(\mr{1}_0^6\,\mr{2}_0\big) \ar[d,red] \& \& \& \& \\
\&  \tl{v}_{12}\big(\mr{1}_2^6\,\mr{2}_2^{-1}\mr{3}_1\big) \ar[rd,mygreen]\ar[ld,blue,"{[2]}"] \& \& \& \& \\
\tl{v}_2\big(\mr{1}_2^6\,\mr{3}_1\big)\ar[rd,mygreen] \& \& \tl{v}_{13}\big(\mr{1}_2^6\,\mr{2}_4\mr{3}_5^{-1}\mr{4}_3\big) \ar[d,red]\ar[rd,brown]\ar[ld,blue,"{[2]}"] \& \& \& \\
\& \tl{v}_3\big(\mr{1}_2^6\,\mr{2}_2\mr{2}_4\mr{3}_5^{-1}\mr{4}_3\big) \ar[d,red]\ar[rd,brown] \& \tl{v}_{15}\big(\mr{1}_2^4\,\mr{2}_6^{-1}\mr{4}_3\big) \ar[rd,brown]\ar[ld,blue] \& \tl{v}_{14}\big(\mr{1}_2^6\,\mr{2}_4\mr{4}_7^{-1}\big) \ar[d,red]\ar[ld,blue,"{[2]}",pos=0.75] \& \& \\
\& \tl{v}_5\big(\mr{1}_2^4\,\mr{2}_2\mr{2}_6^{-1}\mr{4}_3\big) \ar[d,red,"{[2]}",pos=0.15]\ar[rd,brown] \& \tl{v}_4\big(\mr{1}_2^6\,\mr{2}_2\mr{2}_4\mr{4}_7^{-1}\big) \ar[d,red] \& \tl{v}_{16}\big(\mr{1}_2^4\,\mr{2}_6^{-1}\mr{3}_5\mr{4}_7^{-1}\big) \ar[rd,mygreen]\ar[ld,blue] \& \& \\
\& \tl{v}_8\big(\mr{2}_4^{-1}\mr{2}_6^{-1}\mr{3}_3\mr{4}_3\big) \ar[d,mygreen]\ar[rd,brown] \& \tl{v}_6\big(\mr{1}_2^4\,\mr{2}_2\mr{2}_6^{-1}\mr{3}_5\mr{4}_7^{-1}\big) \ar[d,red,"{[2]}",pos=0.15]\ar[rd,mygreen] \& \& \tl{v}_{17}\big(\mr{1}_2^4\,\mr{2}_8\mr{3}_9^{-1}\big) \ar[d,red]\ar[ld,blue] \& \\
\& \tl{v}_{10}\big(\mr{3}_7^{-1}\mr{4}_3\mr{4}_5\big) \ar[rrrd,brown,"-1"',pos=0.2]\ar[rrrrd,brown] \& \tl{v}_9\big(\mr{2}_4^{-1}\mr{2}_6^{-1}\mr{3}_3\mr{3}_5\mr{4}_7^{-1}\big) \ar[rd,mygreen,"{[3]}"',pos=0.85]\ar[rrd,mygreen] \& \tl{v}_7\big(\mr{1}_2^4\,\mr{2}_2\mr{2}_8\mr{3}_9^{-1}\big) \ar[ld,red,"{[4]}"',pos=0.15]\ar[d,red,"{[2]}",pos=0.1] \& \tl{v}_{18}\big(\mr{1}_{2,10}^{4,8}\,\mr{2}_{10}^{-1}\big) \ar[llld,blue,"{-1/[4]}"',pos=0.8]\ar[lld,blue,"{[3]}",pos=0.2] \& \\[1cm]
\& \tl{v}_{19}\big(\mr{1}_{2,10}^{4,8}\big) \ar[d,blue,"{[3]}"',pos=0.25] \& \tl{v}_{20}\big(\mr{1}_{2,10}^{4,8}\,\mr{2}_2\mr{2}_{10}^{-1}\big)\ar[ld,blue,"{1/[4]}"']\ar[d,red,"{1/[3]}",pos=0.25] \& \tl{v}_{21}\big(\mr{2}_8\mr{2}_4^{-1}\mr{3}_3\mr{3}_9^{-1}\big)\ar[ld,red,"{1/[3]}",pos=0.2]\ar[d,mygreen] \& \tl{v}_{22}\big(\mr{3}_5\mr{3}_7^{-1}\mr{4}_5\mr{4}_7^{-1}\big) \ar[ld,mygreen,"-1"]\ar[d,brown] \& \tl{v}_{41}\big(\mr{4}_3\mr{4}_9^{-1}\big)\ar[ld,brown,"{[3]}"] \\
\& \tl{v}_{\ol{18}}\big(\mr{1}_{2,10}^{4,8}\,\mr{2}_2\big) \ar[d,red] \& \tl{v}_{\ol{7}}\big(\mr{1}_{10}^8\,\mr{2}_4^{-1}\mr{2}_{10}^{-1}\mr{3}_3\big) \ar[rd,mygreen]\ar[ld,blue] \& \tl{v}_{\ol{9}}\big(\mr{2}_6\mr{2}_8\mr{3}_7^{-1}\mr{3}_9^{-1}\mr{4}_5\big) \ar[d,red]\ar[rd,brown] \& \tl{v}_{\ol{10}}\big(\mr{3}_5\mr{4}_7^{-1}\mr{4}_9^{-1}\big) \ar[d,mygreen] \& \\
\& \tl{v}_{\ol{17}}\big(\mr{1}_{10}^8\,\mr{2}_4^{-1}\mr{3}_3\big) \ar[rd,mygreen] \& \& \tl{v}_{\ol{6}}\big(\mr{1}_{10}^8\,\mr{2}_6\mr{2}_{10}^{-1}\mr{3}_7^{-1}\mr{4}_5\big) \ar[ld,blue]\ar[d,red,"{[2]}",pos=0.15]\ar[rd,brown] \& \tl{v}_{\ol{8}}\big(\mr{2}_6\mr{2}_8\mr{3}_9^{-1}\mr{4}_9^{-1}\big) \ar[d,red] \& \\
\& \& \tl{v}_{\ol{16}}\big(\mr{1}_{10}^8\,\mr{2}_6\mr{3}_7^{-1}\mr{4}_5\big) \ar[d,red]\ar[rd,brown] \& \tl{v}_{\ol{4}}\big(\mr{1}_{10}^6\,\mr{2}_8^{-1}\mr{2}_{10}^{-1}\mr{4}_5\big) \ar[ld,blue]\ar[rd,brown] \& \tl{v}_{\ol{5}}\ar[ld,blue]\ar[d,red,"{[2]}",pos=0.15]\big(\mr{1}_{10}^8\,\mr{2}_6\mr{2}_{10}^{-1}\mr{4}_9^{-1}\big) \& \\
\& \& \tl{v}_{\ol{14}}\big(\mr{1}_{10}^6\,\mr{2}_8^{-1}\mr{4}_5\big) \ar[rd,brown] \& \tl{v}_{\ol{15}}\big(\mr{1}_{10}^8\,\mr{2}_6\mr{4}_9^{-1}\big) \ar[d,red] \& \tl{v}_{\ol{3}}\big(\mr{1}_{10}^6\,\mr{2}_8^{-1}\mr{2}_{10}^{-1}\mr{3}_7\mr{4}_9^{-1}\big) \ar[ld,blue]\ar[rd,mygreen] \& \\
\& \& \& \tl{v}_{\ol{13}}\big(\mr{1}_{10}^6\,\mr{2}_8^{-1}\mr{3}_7\mr{4}_9^{-1}\big) \ar[rd,mygreen] \& \& \tl{v}_{\ol{2}}\big(\mr{1}_{10}^6\,\mr{3}_{11}^{-1}\big) \ar[ld,blue] \\
\& \& \& \& \tl{v}_{\ol{12}}\big(\mr{1}_{10}^6\,\mr{2}_{10}\mr{3}_{11}^{-1}\big) \ar[d,red] \& \\
\& \& \& \& \tl{v}_{\ol{11}}\big(\mr{1}_{12}^6\,\mr{2}_{12}^{-1}\big) \ar[ld,blue] \& \\
\& \& \& \tl{v}_{\ol{1}}\big(\mr{1}_{12}^6\big) \& \& 
}
\caption{The $q$-character and the supplement for the smallest non-trivial module for $U_q\tl{\text{F}}_{3|1}$}
    \label{fig:F4 affine e-f action}
\end{figure}

For all $z\in C^\times$, the restriction of $\tl{L}_{(3,0,0,0)}(z)$ to $U_q$F$_{3\vert 1}$ decomposes as 
$$\tl{L}_{(3,0,0,0)}(z)\cong L_{(3,0,0,0)}\oplus L_{(0,0,0,0)}\ .$$

We now describe the action of $U_q\tl{\text{F}}_{3\vert 1}$ on $\tl{L}_{(3,0,0,0)}(z)$ in the Drinfeld-Jimbo realization. We choose a basis $\{v_i\}_{i=1}^{40}\cup\{v_{41}\}$ for $\tl{L}_{(3,0,0,0)}(z)$, where $\{v_i\}_{i=1}^{40}$ is the basis of the $U_q$F$_{3\vert 1}$-module $L_{(3,0,0,0)}$ as  in Figure \ref{fig:F4 e-f action} and $v_{41}$ is a basis of the $1$-dimensional $U_q$F$_{3\vert 1}$-module $L_{(0,0,0,0)}$. Note that $v_i$ are scalar multiples of $\tl{v}_i$ for all $i$, except for $i=19,20,21,22,41$.

Let
\eq{\label{superF:E0 action}
E_0=z\bigg( & -\frac{[2]_5}{[3]^{\mr{i}}}E_{19,1} + [3]\,E_{20,1} - \frac{[4]}{[3]^{\mr{i}}} E_{21,1} + \frac{[2]}{[3]^{\mr{i}}} E_{22,1} + \frac{[2]}{[3]^{\mr{i}}} E_{41,1} + E_{\overline{1},19} + \big([2]^{\mr{i}}\big)^2 E_{\overline{1},41} +\sum_{i=11}^{18}E_{12+i,i}\bigg)\ ,
}
\eq{\label{superF: F0 action}
F_0=z^{-1}\bigg( & \frac{[2]_5}{[3]^{\mr{i}}} E_{19,\overline{1}} - [3]\, E_{20,\overline{1}} + \frac{[4]}{[3]^{\mr{i}}} E_{21,\overline{1}} - \frac{[2]}{[3]^{\mr{i}}} E_{22,\overline{1}} - \frac{[2]}{[3]^{\mr{i}}} E_{41,\overline{1}} - E_{1,19} -\big([2]^{\mr{i}}\big)^2 E_{1,41} + \sum_{i=11}^{18}E_{i,12+i} \bigg)\ .
}
where $E_{ij}$ are matrix units, that is, $E_{ij}v_k=\D_{jk}v_i$.

\begin{prp}
    The action of $E_i, F_i$, $i=1,2,3,4$ as in Figure  \ref{fig:F4 e-f action} and $E_0, F_0$ as in \eqref{superF:E0 action}, \eqref{superF: F0 action}  gives a $U_q\tl{\text{F}}_{3|1}$-module structure.
\end{prp}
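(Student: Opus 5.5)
The proof will be a direct verification of the defining relations of Definition \ref{DJ:super} on the $41$-dimensional space with basis $\{v_i\}_{i=1}^{41}$, organized so that most relations are already known and only those involving the affine node $0$ need new work. Relations involving only $K_i^{\pm1},E_i,F_i$ with $i\in\{1,2,3,4\}$ hold because $v_1,\dots,v_{40}$ span $L_{(3,0,0,0)}$, a $U_q\mathrm{F}_{3\vert 1}$-module by the preceding proposition, and $v_{41}$ spans the trivial module. Define $K_0$ through $K_0^{a_0}K_1^{a_1}\cdots K_4^{a_4}=1$; on the weight basis it acts diagonally. The first check is that $E_0$ and $F_0$ have weights $\pm\delta$ for the correct (minus highest root) weight $\delta$. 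Every matrix unit in \eqref{superF:E0 action} raises or lowers weight by a fixed amount: $v_1\mapsto v_{19},\dots,v_{22},v_{41}$, then on to $v_{\ol 1}$, and $v_i\mapsto v_{12+i}$ for $11\le i\le 18$. These shifts can be read off from the weights in Figure \ref{fig:F4 e-f action}. This gives $K_iE_0K_i^{-1}=q^{\tl B_{i0}}E_0$ for $i=1,\dots,4$, and likewise for $F_0$. The parity of $E_0$ is even, as it must be since node $0$ is bosonic; this is consistent because it maps odd vectors $v_{11},\dots,v_{18}$ to odd vectors $v_{23},\dots,v_{30}$.

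Next come the mixed relations. Since $\tl B_{0j}=0$ for $j=2,3,4$, one needs $[E_0,F_j]=[F_0,E_j]=0$ and $E_0E_j=E_jE_0$, $F_0F_j=F_jF_0$ for these $j$. One also needs $E_0F_1-F_1E_0=0$ and $E_1F_0-F_0E_1=0$. These are sparse matrix identities, checked entrywise using the explicit coefficients in Figure \ref{fig:F4 e-f action} and the reflection rule for the $E_i$. The key relation is
\[ E_0F_0-F_0E_0=\frac{K_0-K_0^{-1}}{q_0-q_0^{-1}}. \]
Its left side is diagonal, because $E_0F_0$ and $F_0E_0$ preserve weight. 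Diagonality fails only on the weight-zero block $\{v_{19},v_{20},v_{21},v_{22},v_{41}\}$, where the off-diagonal entries must cancel. That cancellation is precisely what fixes the coefficients $[2]_5/[3]^{\mr i}$, $[4]/[3]^{\mr i}$, $([2]^{\mr i})^2$, and so on. The diagonal values must then match $[\langle\text{weight},\cdot\rangle]_{q_0}$ with $d_0=-3$.

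Finally I would verify the Serre relations involving node $0$. These are the relations between $E_0$ and $E_1$, with $\tl C_{01}=-1$ and $\tl C_{10}=-3$ around an isotropic odd node, and they are taken from Proposition 6.3.1 of \cite{Y99}, together with the analogous relations for $F$. Each is a polynomial identity in $5\times$-sparse $41\times41$ matrices, to be evaluated by computer algebra with $q$ symbolic.

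The main obstacle is the weight-zero block in the $[E_0,F_0]$ relation, together with the higher Serre relations through node $1$. Both involve cancellations among $q$-numbers such as $[2]_5$, $[4]$, $[3]^{\mr i}$ and $[2]^{\mr i}$, which require nontrivial $q$-identities. I would first try the cancellation by hand on the $5\times5$ block, using the known action of $E_1,F_1$ on $v_{19},\dots,v_{22}$. For the Serre relations, the quickest complete route is symbolic computation. As an independent consistency check, I would compare against the $q$-character in Figure \ref{fig:F4 affine e-f action}.
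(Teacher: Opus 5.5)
Your plan matches the paper's proof, which simply checks all defining relations of $U_q\tl{\text{F}}_{3|1}$ on the $41$-dimensional space by direct computation; using the preceding proposition to reduce to the relations involving node $0$ is the natural way to organize that check. One correction to your commentary: the weight-zero block of $E_0F_0-F_0E_0$ vanishes identically for any values of the coefficients, because $E_0$ and $F_0$ use the same coefficient pattern there with opposite signs, so it fixes nothing. The coefficients of $E_0v_1$ are instead constrained by the mixed relations $E_0F_j=F_jE_0$ (for instance $F_2E_0v_1=0$ and $F_4E_0v_1=0$) and by the single scalar identity $[2]_5-[2]([2]^{\mr i})^2=[2]_3[3]^{\mr i}$, which comes from $(E_0F_0-F_0E_0)v_1=-[2]_3v_1$.
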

\begin{proof}
    We check all the relations in $U_q\tl{\text{F}}_{3|1}$ by a direct computation.
\end{proof}

\subsection{The \texorpdfstring{$R$}--Matrix}
The next theorem describes the tensor square of
$\tl{L}_{(3,0,0,0)}$. This module is completely reducible as a $U_q$F$_{3\vert 1}$-module.
\begin{thm}
As $U_q$F$_{3\vert 1}$-modules, we have 
\eq{\label{superF:tensor adjoint}
\big(\tl{L}_{(3,0,0,0)}\big)^{\otimes 2} \ & \cong\big(\underbracket[0.1ex]{L_{(3,0,0,0)}}_{40}\oplus \underbracket[0.1ex]{L_{(0,0,0,0)}}_{1}\big)^{\otimes 2} \\
& \cong \underbracket[0.1ex]{L_{(6,0,0,0)}}_{296}\oplus \underbracket[0.1ex]{L_{(6,1,0,0)}}_{756}\oplus \underbracket[0.1ex]{L_{(5,0,1,0)}}_{507}\oplus\,  3\underbracket[0.1ex]{L_{(3,0,0,0)}}_{40}\oplus \, 2\underbracket[0.1ex]{L_{(0,0,0,0)}}_{1}\ .
}
\end{thm}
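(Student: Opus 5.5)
The plan is to follow the pattern of the proof of Theorem \ref{square D thm}: reduce to the finite-type tensor square of $L_{(3,0,0,0)}$ and find all singular vectors explicitly. First, the first isomorphism is just the restriction $\tl{L}_{(3,0,0,0)}\cong L_{(3,0,0,0)}\oplus L_{(0,0,0,0)}$ stated above. Distributing the tensor product gives
\[
L_{(3,0,0,0)}^{\otimes 2}\oplus \big(L_{(3,0,0,0)}\otimes L_{(0,0,0,0)}\big)\oplus \big(L_{(0,0,0,0)}\otimes L_{(3,0,0,0)}\big)\oplus L_{(0,0,0,0)}^{\otimes 2}.
\]
The middle terms are two copies of $L_{(3,0,0,0)}$, generated by $v_1\otimes v_{41}$ and $v_{41}\otimes v_1$. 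The last term is one copy of $L_{(0,0,0,0)}$, spanned by $v_{41}\otimes v_{41}$. It therefore remains to show that the $1600$-dimensional space $L_{(3,0,0,0)}^{\otimes 2}$ is isomorphic to $L_{(6,0,0,0)}\oplus L_{(6,1,0,0)}\oplus L_{(5,0,1,0)}\oplus L_{(3,0,0,0)}\oplus L_{(0,0,0,0)}$. The dimension check is $296+756+507+40+1=1600$.

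Next, I will compute the singular vectors in $L_{(3,0,0,0)}^{\otimes 2}$ weight by weight, using the explicit action of Figure \ref{fig:F4 e-f action} and the symmetric coproduct $\Delta(E_i)=E_i\otimes K_i^{1/2}+K_i^{-1/2}\otimes E_i$.
\begin{itemize}
\item At weight $(6,0,0,0)$ the only candidate is $v_1\otimes v_1$.
\item At weight $(6,1,0,0)$ one solves $\Delta(E_i)u=0$ on $\operatorname{span}\{v_1\otimes v_{11},v_{11}\otimes v_1\}$, which gives a $q$-(anti)symmetric combination.
\item Similarly, at weight $(5,0,1,0)$ one solves on the span of tensors $v_a\otimes v_b$ of that weight (pairs such as $v_1\otimes v_{12}$, $v_{11}\otimes v_{11}$, and their flips).
\item At weight $(3,0,0,0)$ there is a singular vector analogous to $u_{5a}$: a combination of $v_1\otimes v_{19},\dots,v_1\otimes v_{22}$, their flips, and the pairs $v_{i}\otimes v_{\overline{j}}$ of total weight $(3,0,0,0)$.
\item At weight $0$ there is an invariant vector analogous to $u_{6a}$, namely the quantized Killing/super-Casimir pairing $\sum \pm c_i\, v_i\otimes v_{\overline{i}}+(\text{terms in }v_{19},\dots,v_{22})$.
\end{itemize}
Each singular vector generates a highest weight submodule. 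I will compute the dimension of each generated submodule by repeatedly applying $\Delta(F_i)$. These dimensions should be $296$, $756$, $507$, $40$ and $1$, and the generated submodules are then irreducible, being quotients of Verma modules with the correct dimension. The last point is justified by the deformation result of \cite{G07}, which gives these irreducible dimensions, or by a direct check that no further singular vectors exist inside them.

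Finally, I will show that the sum of these submodules is direct and exhausts the whole space. Since the highest weights are pairwise distinct and the modules are irreducible, the sum is direct. Its dimension is then $1600$, so complete reducibility follows, in contrast with the D$_{2|1;\alpha}$ case. For the whole $41^2$-dimensional space one also records that the three copies of $L_{(3,0,0,0)}$ and the two copies of $L_{(0,0,0,0)}$ intersect trivially. This holds because the singular vectors $v_1\otimes v_{41}$, $v_{41}\otimes v_1$ and the one found in $L_{(3,0,0,0)}^{\otimes 2}$ are linearly independent, and likewise for weight zero.

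The main obstacle is the typical, non-generic nature of the superalgebra setting. One must rule out extra singular vectors and indecomposable gluing, such as the $W_a$ phenomenon of Theorem \ref{square D thm}. The cleanest check is to count: compute $\dim$ of the space of singular vectors of each weight in the full tensor square, and verify that it equals $1,1,1,3,2$ at weights $(6,0,0,0)$, $(6,1,0,0)$, $(5,0,1,0)$, $(3,0,0,0)$, $0$ and is zero elsewhere. Then confirm that the generated submodules have total dimension $1681$. This is a finite linear algebra computation, which I would carry out by computer, as the paper does elsewhere.
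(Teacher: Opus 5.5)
Your proposal is correct and is essentially the paper's proof. The paper likewise finds, by direct computation, the eight singular vectors and records the irreducible modules they generate: $v_1\otimes v_1$, then $q^{-3}v_1\otimes v_{11}-q^{3}v_{11}\otimes v_1$, one vector of weight $(5,0,1,0)$, three of weight $(3,0,0,0)$ (including $v_1\otimes v_{41}$ and $v_{41}\otimes v_1$), and two of weight $0$ (including $v_{41}\otimes v_{41}$). Your argument via pairwise distinct highest weights and the count $296+756+507+40+1=1600$ only makes the directness and exhaustion explicit.

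One small slip. The tensors $v_1\otimes v_{12}$ and $v_{11}\otimes v_{11}$ have weights $(5,-1,1,0)$ and $(6,2,0,0)$, so the weight-$(5,0,1,0)$ space is actually spanned by $v_1\otimes v_2$, $v_2\otimes v_1$, $v_{11}\otimes v_{12}$ and $v_{12}\otimes v_{11}$. Likewise, the odd--odd terms of the weight-$(3,0,0,0)$ singular vector are $v_i\otimes v_{29-i}$, $11\le i\le 18$.
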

\p{We again perform the direct calculations in $\tl{L}_{(3,0,0,0)}^{\otimes 2}$ and observe the following.

There are $8$ linearly independent singular vectors, which we enumerate as $u_1,u_2,u_3,u_{4a},u_{4b},u_{4c},u_{5a},u_{5b}$. The singular vector $u_1=v_1\otimes v_1$ generates $L_{(6,0,0,0)}$, the singular vector $u_2=q^{-3}v_1\otimes v_{11}-q^3 v_{11}\otimes v_1$ generates $L_{(6,1,0,0)}$, and the singular vector $u_3=q^{-4}v_1\otimes v_2+q^4 v_2\otimes v_1-q v_{11}\otimes v_{12}+q^{-1}v_{12}\otimes v_{11}$ generates $L_{(5,0,1,0)}$. There is a natural choice for the three singular vectors $u_{4a}$, $u_{4b}$, $u_{4c}$ of weight $(3,0,0,0)$. We choose $u_{4b}=v_1\otimes v_{41}$, $u_{4c}=v_{41}\otimes v_1$, and $u_{4a}\in L_{(3,0,0,0)}^{\otimes 2}$ is given by 
\bee{
u_{4a}=\ & [2]_5\,(q^3v_1\otimes v_{19}-q^{-3}v_{19}\otimes v_1)-[3]_2\,(q^3v_1\otimes v_{20}-q^{-3}v_{20}\otimes v_1)+[4]\,(q^3v_1\otimes v_{21}-q^{-3}v_{21}\otimes v_1) \\
& -[2]\,(q^3v_1\otimes v_{22}-q^{-3}v_{22}\otimes v_1)+[3]_2\,(q^{9/2}v_{11}\otimes v_{18}+q^{-9/2}v_{18}\otimes v_{11} -q^{7/2}v_{12}\otimes v_{17}-q^{-7/2}v_{17}\otimes v_{12} \\
& +q^{3/2}v_{13}\otimes v_{16}+q^{-3/2}v_{16}\otimes v_{13} -q^{-1/2}v_{14}\otimes v_{15}-q^{1/2}v_{15}\otimes v_{14})\ .
}
Finally, the two singular vectors of weight zero are chosen so that $u_{5b}=v_{41}\otimes v_{41}$, and $u_{5a}\in L_{(3,0,0,0)}^{\otimes 2}$ is given by
\bee{
u_{5a}=\ & [3]^{\mr{i}}\,(q^9 v_1\otimes v_{\ol{1}}+q^{-9} v_{\ol{1}}\otimes v_1) +[2][3]_2\,(q^8 v_2\otimes v_{\ol{2}}+q^{-8} v_{\ol{2}}\otimes v_2-q^6v_3\otimes v_{\ol{3}}-q^{-6}v_{\ol{3}}\otimes v_3 +q^4v_4\otimes v_{\ol{4}} \\ 
& +q^{-4}v_{\ol{4}}\otimes v_4-q^4v_8\otimes v_{\ol{8}}-q^{-4}v_{\ol{8}}\otimes v_8 + q^2 v_9\otimes v_{\ol{9}}+q^{-2}v_{\ol{9}}\otimes v_9 + q^2v_{10}\otimes v_{\ol{10}}+q^{-2}v_{\ol{10}}\otimes v_{10} ) \\
& + [3]_2\,(q^5v_5\otimes v_{\ol{5}}+q^{-5}v_{\ol{5}}\otimes v_5 -q^3v_6\otimes v_{\ol{6}}-q^{-3}v_{\ol{6}}\otimes v_6 +qv_7\otimes v_{\ol{7}}+q^{-1}v_{\ol{7}}\otimes v_7 + q^9v_{11}\otimes v_{\ol{11}} \\ 
& -q^{-9}v_{\ol{11}}\otimes v_{11} -q^8v_{12}\otimes v_{\ol{12}} +q^{-8}v_{\ol{12}}\otimes v_{12} +q^6v_{13}\otimes v_{\ol{13}} -q^{-6}v_{\ol{13}}\otimes v_{13} -q^4v_{14}\otimes v_{\ol{14}} +q^{-4}v_{\ol{14}}\otimes v_{14} \\ 
& - q^5v_{15}\otimes v_{\ol{15}} +q^{-5}v_{\ol{15}}\otimes v_{15} +q^3v_{16}\otimes v_{\ol{16}} -q^{-3}v_{\ol{16}}\otimes v_{16} -qv_{17}\otimes v_{\ol{17}} +q^{-1}v_{\ol{17}}\otimes v_{17} +v_{18}\otimes v_{\ol{18}} \\
& -v_{\ol{18}}\otimes v_{18}-v_{19}\otimes v_{20}-v_{20}\otimes v_{19}) +[2]_5\,v_{19}\otimes v_{19} + [4]\,(v_{19}\otimes v_{21}+v_{21}\otimes v_{19}-v_{21}\otimes v_{21}-v_{22}\otimes v_{22}) \\
& -[2]\,(v_{19}\otimes v_{22}+v_{22}\otimes v_{19}-v_{21}\otimes v_{22}-v_{22}\otimes v_{21})\ .
}
}

For $\la=(6,0,0,0),(6,1,0,0),(5,0,1,0),(3,0,0,0),(0,0,0,0)$, let $P_\la^q$ be the projector onto the summand $L_{\la}$ in the decomposition \eqref{superF:tensor adjoint}.

\begin{thm}
\label{superF:Rcheck}
    In terms of projectors, we have
\eq{\label{superF: Rqz}
\check{R}(z)=\ & P^q_{(6,0,0,0)}-q^6\frac{1-q^{-6}z}{1-q^6 z}\,P^q_{(6,1,0,0)}+q^{10}\frac{(1-q^{-4}z)(1-q^{-6}z)}{(1-q^4 z)(1-q^6 z)}\,P^q_{(5,0,1,0)} \\
& + \frac{q^3\ f_1(z)}{(1-q^{-4}z)(1-q^4 z)(1-q^6 z)}\otimes P^q_{(3,0,0,0)} + \frac{f_0(z)}{(1-q^{-4}z)(1-q^{-6}z)(1-q^4 z)(1-q^6 z)}\otimes P^q_{(0,0,0,0)}\ ,
}
where the matrices $f_1(z)$ and $f_0(z)$ are given by 
\bee{
& f_1(z)=\begin{bmatrix}
    -q^{-3}+\A_q\,z-\A_{q^{-1}}\,z^2+q^3z^3 & \hspace{-20pt} -\B\,z(1-z) & \hspace{-10pt} -\B\,z(1-z) 
    \vspace{5pt} \\
    -\G\,z(1-z) & \hspace{-20pt} \mu\,z(q^3+q^{-3}z) & \hspace{-10pt} (1-z)(q^3-\nu\,z+q^{-3}z^2) 
    \vspace{5pt} \\
    -\G\,z(1-z) & \hspace{-20pt} (1-z)(q^3-\nu\,z+q^{-3}z^2) & \hspace{-10pt} \mu\,z(q^3+q^{-3}z)
\end{bmatrix} \ ,
\vspace{10pt} \\
\vspace{10pt} \\
& f_0(z)= \begin{bmatrix}
    q^{-6}-q^{-3}\zeta\,z+\xi\,z^2-q^3\zeta\,z^3+q^6z^4 & \eta\, z(1-z^2) 
    \vspace{5pt} \\
    \rho\, z(1-z^2) & q^{6}-q^{3}\zeta\,z+\xi\,z^2-q^{-3}\zeta\,z^3+q^{-6}z^4
\end{bmatrix} \ .
}

\bigskip

Here, the constants $\A_q,\B,\G,\mu,\nu,\zeta,\xi,\eta,\rho\in\C(q)$ are given by 
\bee{
& \A_q=\frac{(q^2[2]-4q^{-1}+2q^{-4}[2]-q^{-7})\,[3]}{[3]^{\mr{i}}}\ ,\quad \B=\frac{([2]^{\mr{i}})^3\,[3]}{[3]^{\mr{i}}}\ ,
\vspace{8pt} \\
& \G=\frac{2\,[2]^{\mr{i}}\,[5]^\mr{i}\,([2])^2\,[3]}{[3]^{\mr{i}}}\ ,\quad \mu=\frac{([2]^{\mr{i}})^3\,[2]\,[3]}{[3]^{\mr{i}}}\ ,\quad \nu=\frac{[2]\,[3]_2^{\mr{i}}}{[3]^{\mr{i}}}\ ,\quad \zeta= \frac{2[2]_5}{[3]^{\mr{i}}}\ , 
\vspace{8pt} \\
& \xi=[3]\,\Big([2]_8-[2]_6-[2]_4+2[2]_2\Big)\ ,\quad \eta=\frac{([2]^{\mr{i}})^5\,[3]}{[3]^{\mr{i}}}\ ,\quad \rho=\frac{[2]^{\mr{i}}\,[2]_4\,[3]\,([4])^2}{[3]^{\mr{i}}}\ . 
}
\p{
We use the  equation $$\check{R}(z)\,\Delta(F_0)=\Delta(F_0)\,\check{R}(z):\tl L_{(3,0,0,0)}(z)\otimes\tl{L}_{(3,0,0,0)}(1)\to \tl L_{(3,0,0,0)}(1)\otimes\tl{L}_{(3,0,0,0)}(z)\ ,$$ with $F_0$ in \eqref{superF: F0 action}. Equivalently, one can use $E_0$ in \eqref{superF:E0 action}.
}
\end{thm}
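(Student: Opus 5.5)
The plan is to determine $\check{R}(z)$ from the intertwining property alone, using the decomposition \eqref{superF:tensor adjoint} to reduce the unknowns to a few scalar functions.

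\textbf{Step 1: parametrize the $U_q$F$_{3\vert 1}$-linear maps.} The subalgebra $U_q$F$_{3\vert 1}$ sits inside $U_q\tl{\text{F}}_{3\vert 1}$ through the generators $E_i,F_i,K_i$, $i=1,\dots,4$, and these act on $\tl L_{(3,0,0,0)}(z)$ independently of $z$. Hence $\check{R}(z)$ is an endomorphism of the $U_q$F$_{3\vert 1}$-module \eqref{superF:tensor adjoint}, which is completely reducible. By Schur's lemma, $\check{R}(z)$ is determined by:
\begin{itemize}
\item three scalars $r_{(6,0,0,0)}(z), r_{(6,1,0,0)}(z), r_{(5,0,1,0)}(z)$ on the multiplicity-one summands;
\item a $3\times 3$ matrix acting on the span of the singular vectors $u_{4a},u_{4b},u_{4c}$ of weight $(3,0,0,0)$;
\item a $2\times 2$ matrix acting on the span of $u_{5a},u_{5b}$ of weight zero.
\end{itemize}
This gives $3+9+4=16$ unknown functions of $z$. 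The normalization $\check{R}(z)(v_1\otimes v_1)=v_1\otimes v_1$ fixes $r_{(6,0,0,0)}=1$.

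\textbf{Step 2: impose commutation with $\Delta(F_0)$.} The remaining constraint is $\check{R}(z)\Delta(F_0)=\Delta(F_0)\check{R}(z)$, where on the source $F_0$ carries the factor $z^{-1}$ from \eqref{superF: F0 action}, and on the target the factors are exchanged. It suffices to apply both sides to a small set of vectors.

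Since $F_0$ has weight $-\delta_0$, where $\delta_0$ is the finite part of the affine root $\alpha_0$, it shifts the finite weight by the highest root. Applying $\Delta(F_0)$ to a singular vector $u$ of weight $\lambda$ therefore lands in the weight space $\lambda-\delta_0$ (up to sign conventions). That weight space is spanned by $U_q$F$_{3\vert 1}$-descendants of the singular vectors. So I would do the following:
\begin{itemize}
\item express each such vector in terms of the components $F_{i_1}\cdots F_{i_k}u_j$ inside each summand;
\item apply $\check{R}(z)$ to both sides and compare coefficients.
\end{itemize}
This produces a linear system for the 16 unknowns with coefficients in $\C(q,z)$.

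For the scalar parts I would proceed as in \cite{DM25a}. For example, apply $\Delta(F_0)$ to $u_2$ and pick a descendant vector whose components lie in $L_{(6,0,0,0)}$ and $L_{(6,1,0,0)}$ only. This yields the ratio $r_{(6,1,0,0)}/r_{(6,0,0,0)}=-q^6(1-q^{-6}z)/(1-q^6z)$, and the analogous step gives $r_{(5,0,1,0)}$. Using the weight-zero and weight-$(3,0,0,0)$ components then gives linear equations coupling the $3\times3$ and $2\times2$ blocks to the known scalars.

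\textbf{Step 3: main obstacle and consistency checks.} The main obstacle is the multiplicity blocks $f_1,f_0$. The equations there involve the explicit non-symmetric vectors $u_{4a}$ and $u_{5a}$, and they require the full projection of $\Delta(F_0)$-images onto each isotypic component. These projections are large computations in the $41^2$-dimensional space, and I would carry them out by computer algebra, solving the resulting linear system over $\C(q)(z)$.

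Several independent checks would confirm the solution:
\begin{itemize}
\item the result must be unique up to scalar, reflecting the irreducibility of the tensor product for generic $z$;
\item $\check{R}(1)$ should be the identity, the expected behaviour in this case;
\item $\check{R}(z)\check{R}(z^{-1})$ should equal the identity after the appropriate identification;
\item the same system obtained from $E_0$ via \eqref{superF:E0 action} should give the identical answer.
\end{itemize}
Finally, the poles at $z=q^{\pm4},q^{\pm6}$ in the solved $f_1,f_0$ should match the reducibility points visible from the $q$-character in Figure~\ref{fig:F4 affine e-f action}.
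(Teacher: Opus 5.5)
Your plan is essentially the paper's argument. You use Schur's lemma on the completely reducible $U_q$F$_{3\vert 1}$-decomposition to reduce $\check{R}(z)$ to $3+9+4$ unknown functions, normalize on $v_1\otimes v_1$, and then solve $\check{R}(z)\Delta(F_0)=\Delta(F_0)\check{R}(z)$ by direct computer computation, which is what the paper does.

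One correction to your illustrative shortcut: in the paper's conventions $F_0$ raises the finite weight by $(3,0,0,0)$ and annihilates $v_1,\dots,v_{18}$, so $\Delta(F_0)u_2=0$ and that step only yields $0=0$. To extract scalar ratios from singular vectors, use $\Delta(E_0)$, which lowers the weight by $(3,0,0,0)$, or apply $\Delta(F_0)$ to lowest-weight vectors. This does not affect solving the full linear system.
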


\begin{rmk}
    The new feature here is that we have poles of the $R$-matrix simultaneously, at $z=q^{-4}$, $z=q^4$ and at $z=q^{-6}$, $z=q^6$. We also note that the determinants of 
    $$g_1(z)=\frac{q^3\ f_1(z)}{(1-q^{-4}z)(1-q^4 z)(1-q^6 z)}\ ,\quad g_2(z)=\frac{f_0(z)}{(1-q^{-4}z)(1-q^{-6}z)(1-q^4 z)(1-q^6 z)}\ ,$$
     are given by 
     $$\det\big(g_1(z)\big)=q^{12}\frac{(1-q^{-6}z)^2}{(1-q^6z)^2}\ ,\quad \det\big(g_2(z)\big)=1\ .$$
\end{rmk}

The tensor product $\tl{L}_{(3,0,0,0)}(z)\otimes\tl{L}_{(3,0,0,0)}(1)$ is irreducible except for some special values of $z$. These special values of $z$, and the corresponding non-trivial submodules, are listed below. Note that the submodules for $z=q^k$, are quotient modules for $z=q^{-k}$.
\begin{enumerate}
    \item $z=q^4$: We have a $548$-dimensional submodule isomorphic to $L_{(5,0,1,0)}\oplus L_{(3,0,0,0)}\oplus L_{(0,0,0,0)}$ as a $U_q$F$_{3\vert 1}$-module, and generated by $u_3$. This submodule is contained inside a $1640$-dimensional submodule isomorphic to $L_{(6,0,0,0)}\oplus L_{(6,1,0,0)}\oplus L_{(5,0,1,0)}\oplus 2\,L_{(3,0,0,0)}\oplus L_{(0,0,0,0)}$ as a $U_q$F$_{3\vert 1}$-module, and generated by $u_1$.
    \item $z=q^{-4}$: We have a $41$-dimensional submodule isomorphic to $L_{(3,0,0,0)}\oplus L_{(0,0,0,0)}$ as a $U_q$F$_{3\vert 1}$-module, and generated by $u_{4a}+u_{4b}+u_{4c}$. This submodule is contained inside a $1133$-dimensional submodule isomorphic to $L_{(6,0,0,0)}\oplus L_{(6,1,0,0)}\oplus 2\,L_{(3,0,0,0)}\oplus L_{(0,0,0,0)}$ as a $U_q$F$_{3\vert 1}$-module, and generated by $u_1$.
    \item $z=q^{6}$: We have a $1344$-dimensional submodule isomorphic to $L_{(6,1,0,0)}\oplus L_{(5,0,1,0)}\oplus 2\,L_{(3,0,0,0)}\oplus L_{(0,0,0,0)}$ as a $U_q$F$_{3\vert 1}$-module, and generated by $u_2$. This submodule is contained inside a $1680$-dimensional submodule isomorphic to $L_{(6,0,0,0)}\oplus L_{(6,1,0,0)}\oplus L_{(5,0,1,0)}\oplus 3\,L_{(3,0,0,0)}\oplus L_{(0,0,0,0)}$ as a $U_q$F$_{3\vert 1}$-module, and generated by $u_1$.
    \item $z=q^{-6}$: We have a $1$-dimensional submodule isomorphic to $L_{(0,0,0,0)}$ as a $U_q$F$_{3\vert 1}$-module, and generated by $[2]_2\,u_{5a}-u_{5b}$. This submodule is contained inside a $337$-dimensional submodule isomorphic to $L_{(6,0,0,0)}\oplus L_{(3,0,0,0)}\oplus L_{(0,0,0,0)}$ as a $U_q$F$_{3\vert 1}$-module, and generated by $u_1$.
\end{enumerate}

We consider the $q\to 1$ rational limit. With our normalization of the singular vectors $u_1,\dots, u_{5b}$, the limit of these vectors exists, and therefore the limit of the projectors exists. Let $P_{(a,b,c,d)}$ be the limit $q\to 1$ of the $U_q$F$_{3\vert 1}$-projectors $P_{(a,b,c,d)}^q$ appearing in the expression \eqref{superF: Rqz} of $\check{R}(z)$.

\begin{cor}
    The rational $R$-matrix $\check{R}(u)$ for the Yangian of F$_{3\vert 1}$ is given by 
    \eq{\label{superF: rational R matrix}
\check{R}(u) =\ & P_{(6,0,0,0)} + \frac{3-u}{3+u}\,P_{(6,1,0,0)} + \frac{(2-u)(3-u)}{(2+u)(3+u)}\,P_{(5,0,1,0)} + \frac{\bar f_1(u)}{(2-u)(2+u)(3+u)}\otimes P_{(3,0,0,0)} \\
& + \frac{\bar f_0(u)}{(2-u)(3-u)(2+u)(3+u)}\otimes P_{(0,0,0,0)}\ ,
}
where the matrices $\bar f_1(u)$ and $\bar f_0(u)$ are given by 
\bee{
& \bar f_1(u)=\begin{bmatrix}
    12-4u+3u^2+u^3 &  6\,u &  6\,u 
    \vspace{5pt} \\
    12\,u & 12 & u(4-u)(1+u) 
    \vspace{5pt} \\
    12\,u & u(4-u)(1+u) & 12
\end{bmatrix} \ ,
\vspace{10pt} \\
\vspace{10pt} \\
& \bar f_0(u)= \begin{bmatrix}
    36-12u+5u^2+6u^3+u^4 & -24\,u 
    \vspace{5pt} \\
    -48\,u & 36+12u+5u^2-6u^3+u^4
\end{bmatrix} \ .
}
\p{
We substitute $z=q^{2u}$ in \eqref{superF: Rqz} and take limit $q\to 1$. Apriori, the $q\to 1$ limit of $\check{R}(q^{2u})$ does not exist. To have a limit we conjugate $\check{R}(q^{2u})$ by $A\otimes A$ where the operator $A:V\to V$ is given by $A\,v_i=v_i$ for $1\le i\le 40$ and $A\,v_{41}=\frac{1}{[2]^{\mr{i}}}v_{41}$. Thus, we obtain $\check{R}(u)=\lim_{q\to 1}(A\otimes A)\check{R}(q^{2u})(A\otimes A)^{-1}$ as above in \eqref{superF: rational R matrix}. Note that conjugation by an operator of the form $A\otimes A$ preserves the quantum Yang-Baxter equation and, for our choice of  $A$, $A\otimes A$ commutes with action of  $U_q{\text{F}}_{3|1}$ but not with action of $U_q\tl{\text{F}}_{3|1}$.
}
\end{cor}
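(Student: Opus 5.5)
The statement to be proved is the corollary giving the rational $R$-matrix \eqref{superF: rational R matrix}, obtained as a limit of Theorem \ref{superF:Rcheck}. I would mirror the D$_{2\vert 1;\alpha}$ argument. Substitute $z=q^{2u}$ into \eqref{superF: Rqz}, conjugate by $A\otimes A$ with $A v_{41}=v_{41}/[2]^{\mr{i}}$ and $A$ the identity on $v_1,\dots,v_{40}$, and let $q\to1$.

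First, the conjugation is harmless and acts only on the multiplicity matrices. Since $A$ acts as a scalar on each $U_q$F$_{3\vert1}$-isotypic summand $L_{(3,0,0,0)}$ and $L_{(0,0,0,0)}$ of $\tl L_{(3,0,0,0)}$, the operator $A\otimes A$ commutes with $U_q$F$_{3\vert1}$. On the singular vectors it acts as follows:
\begin{itemize}
\item it fixes $u_1,u_2,u_3,u_{4a},u_{5a}$;
\item it multiplies $u_{4b},u_{4c}$ by $1/[2]^{\mr{i}}$;
\item it multiplies $u_{5b}$ by $1/([2]^{\mr{i}})^2$.
\end{itemize}
Hence the projectors $P^q_\lambda$ for $\lambda\neq(3,0,0,0),(0,0,0,0)$ are unchanged. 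The matrix $f_1$ is conjugated by $\mathrm{diag}(1,[2]^{\mr{i}},[2]^{\mr{i}})$ (or its inverse, depending on convention), and $f_0$ by $\mathrm{diag}(1,([2]^{\mr{i}})^2)$. The scalar entries are already fine:
\begin{itemize}
\item $-q^6(1-q^{-6}z)/(1-q^6z)$ with $z=q^{2u}$ equals $-(q^{6}-q^{2u})/(1-q^{6+2u})\to (3-u)/(3+u)$, after dividing numerator and denominator by $q-q^{-1}$ to first order;
\item similarly $q^{10}\frac{(1-q^{-4}z)(1-q^{-6}z)}{(1-q^4z)(1-q^6z)}\to\frac{(2-u)(3-u)}{(2+u)(3+u)}$.
\end{itemize}

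Second, treat the matrix blocks entrywise. Write $h=\log q\to0$ and expand $q^{a}=1+ah+O(h^2)$ and $z=1+2uh+O(h^2)$. Then $[n]\to n$, $[n]_k\to n$, while $[n]^{\mr{i}}\to 1$ for odd $n$ and $[2]^{\mr{i}}_k\sim 2kh/(\ldots)$-type behaviour holds, so $[2]^{\mr{i}}=O(h)$. The denominators $(1-q^{\pm4}z)(1-q^6z)$ and $(1-q^{\pm4}z)(1-q^{\pm6}z)$ are of order $h^3$ and $h^4$, equal to $(2h)^3(2-u)(2+u)(3+u)$ up to sign, and similarly for $f_0$. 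So each entry of the conjugated $f_1$ must vanish to order $h^3$, and of $f_0$ to order $h^4$, with leading coefficients giving $\bar f_1$ and $\bar f_0$. For off-diagonal entries this is visible:
\begin{itemize}
\item $\B z(1-z)$ has $\B=O(h^3)$ since $([2]^{\mr{i}})^3$ appears, and $(1-z)=O(h)$, so rescaling by one factor of $[2]^{\mr{i}}$ balances the orders;
\item $\G z(1-z)\cdot[2]^{\mr{i}}$ balances likewise;
\item $\eta z(1-z^2)$ contains $([2]^{\mr{i}})^5$, $\rho$ contains $[2]^{\mr{i}}$, and these are balanced by the $([2]^{\mr{i}})^{\pm2}$ rescaling.
\end{itemize}
The diagonal entries, such as the cubic $-q^{-3}+\A_q z-\A_{q^{-1}}z^2+q^3z^3$ and the quartics in $f_0$, require Taylor expansion to third or fourth order in $h$. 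The low orders must cancel identically, and this cancellation serves as a consistency check on the constants $\A_q,\zeta,\xi$.

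The main obstacle is this high-order expansion of the diagonal entries together with correctly tracking the sign and orientation conventions of the conjugation, since a wrong direction of rescaling produces divergent or vanishing off-diagonal entries. I would fix the direction by requiring finiteness of all entries. I would then carry out the expansions symbolically, as the paper does for its other verifications, and finally confirm that the resulting expression is finite and matches $\bar f_1,\bar f_0$. As sanity checks, I would verify that at $u=0$ it reduces to the expected $\check R(0)$, and that the determinants agree with the limits of $\det g_1$ and $\det g_2$ from the preceding remark, namely $((3-u)/(3+u))^2$ and $1$.
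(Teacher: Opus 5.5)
Your plan is the paper's own proof. You substitute $z=q^{2u}$ into the formula of Theorem~\ref{superF:Rcheck}, conjugate by $A\otimes A$ with $Av_{41}=v_{41}/[2]^{\mr{i}}$, and let $q\to1$. The bookkeeping you add is correct and fills in what the paper leaves implicit:
\begin{itemize}
\item $A\otimes A$ commutes with $U_q$F$_{3\vert1}$ and with all isotypic projectors.
\item It acts only on the multiplicity matrices $f_1,f_0$, conjugating them by $\mathrm{diag}(1,c,c)$ and $\mathrm{diag}(1,c^2)$ (or their inverses), where $c=[2]^{\mr{i}}$.
\item The order counting $\B=O(h^3)$, $\G=O(h)$, $\eta=O(h^5)$, $\rho=O(h)$, against denominators of order $h^3$ and $h^4$, is exactly what makes the limit exist.
\end{itemize}

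One caution concerns your last step, where you ``confirm that the result matches $\bar f_1,\bar f_0$''. The off-diagonal entries of the limit depend on the exact leading constant of the rescaling, not only on its direction. Both of your sanity checks (the value at $u=0$ and the determinants) are conjugation invariant, so they cannot detect a mismatch there.

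With $q=e^h$ we have $[2]^{\mr{i}}=q-q^{-1}\sim 2h$. Expanding the stated constants literally, the $(1,2)$ entry of $g_1$ after conjugation is
$-q^3(q-q^{-1})^2\tfrac{[3]}{[3]^{\mr{i}}}\,z(1-z)/\bigl((1-q^{-4}z)(1-q^4z)(1-q^6z)\bigr)$,
which tends to $3u/\bigl((2-u)(2+u)(3+u)\bigr)$. Altogether you would obtain:
\begin{itemize}
\item in $\bar f_1$: $3u$ in the first row and $24u$ in the first column, instead of the stated $6u$ and $12u$;
\item in $\bar f_0$: $-6u$ and $-192u$, instead of the stated $-24u$ and $-48u$.
\end{itemize}
The diagonal entries and the conjugation-invariant products ($72u^2$ and $1152u^2$) do agree. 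So the stated matrices are what one gets with $Av_{41}=2v_{41}/[2]^{\mr{i}}$. Equivalently, they differ from the literal limit by conjugation with a constant rescaling of $v_{41}$ by $2$, which still gives a solution of the Yang--Baxter equation. By contrast, the same literal computation in type G$_{2\vert1}$ reproduces $-8u,-18u,-8u,-216u$ exactly.

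This is a normalization slip inherited from the paper's statement, not a flaw in your method. Still, as written your final matching step would not go through. You should either adjust $A$ by this factor of $2$ or state the result up to this conjugation.
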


\section{Type G\texorpdfstring{$_{2\vert 1}$}{2}}
\label{super:G}

In this section, we give the expression of $\check{R}(z)$ for the $32$-dimensional representation of $U_q\tl{\text{G}}_{2\vert 1}$ in terms of projectors related to the tensor square of the $31$-dimensional representation of $U_q$G$_{2\vert 1}$. As in the case of $U_q\tl{\text{F}}_{3\vert 1}$, the $R$-matrix obtained has the form similar to the $R$-matrices of untwisted and twisted types in cases of non-trivial multiplicity.

\subsection{Quantum algebra of finite type}

We consider the distinguished Dynkin diagram where one node is fermionic and the other two bosonic, as shown below:

\begin{center}
\begin{tikzpicture}
    [root/.style={circle, draw, thick, minimum size=6pt, inner sep=0pt}]
    \node at (1,0) {$\textcolor{blue}{\otimes}$};
    \node[root, color=red] at (2,0) {};
    \node[root, color=mygreen] at (3,0) {};

    \node at (2.5,0) {\scalebox{1.5}{$<$}};
    \draw (2.9,0)--(2.1,0);
    \draw (2.95,0.1)--(2.05,0.1);
    \draw (2.95,-0.1)--(2.05,-0.1);
    \draw (1.1,0) -- (1.9,0);

    \node at (1,-0.3) {1};
    \node at (2,-0.3) {2};
    \node at (3,-0.3) {3};
\end{tikzpicture}\ .
\end{center}
The corresponding Cartan matrix is given by 
$$\begin{bmatrix}
    0 & 1 & 0 \\
    -1 & 2 & -3 \\
    0 & -1 & 2
\end{bmatrix}\ .$$
We choose $d_1=-1$, $d_2=1$, $d_3=3$ in this case.

The finite dimensional irreducible modules of G$_{2\vert 1}$ are classified (see \cite{K75}, \cite{M13} for details) by triples $(a,b,c)$ of non-negative integers such that $k=(a-2b-3c)/2$ is a non-negative integer different from $1$ and if $k=0$ then $(a,b,c)=(0,0,0)$, if  $k=2$ then $b=0$. 

 We denote the irreducible $U_q$G$_{2\vert 1}$-module corresponding to the highest weight $(a,b,c)$ by $L_{(a,b,c)}$.

We construct a weighted basis $\{v_i\}_{i=1}^{31}$ for the $31$-dimensional $U_q$G$_{2\vert 1}$-module $L_{(4,0,0)}$ as follows. The highest weight vector $v_1$ is of weight $(4,0,0)$. Other vectors $v_i$ are such that the action of $U_q$G$_{2\vert 1}$ is as in Figure \ref{fig:G3 e-f action}. Figure \ref{fig:G3 e-f action} shows the action $F_i$, $i=1,2,3$. We use blue for $F_1$, red for $F_2$, and green for $F_3$, as in the Dynkin diagram nodes. Here $\overline{i}=32-i$. The coefficients on the arrows are quantum numbers, and if a coefficient is not shown, then it is one. 
For example, $F_2\,v_{16}=[2]\,v_{\overline{6}}$, $F_3\,v_{17}=[2]_3\,v_{\overline{7}}$, etc. 
The action of $E_i$, $i=1,2,3$ is symmetric with respect to the action of $F_i$ and can be read off from the diagram obtained by reflecting the diagram shown in Figure \ref{fig:G3 e-f action} about a horizontal line passing through the weight zero vectors $v_{15},v_{16},v_{17}$. 
Specifically, when $1\le j,k\le 14$, we have $E_i\,v_j=a\,v_k$ if and only if $F_i\,v_{\overline{k}}=a\,v_{\overline{j}}$, and when $18\le j,k\le 31$ we have $E_i\,v_j=(-1)^{s_i}a\,v_k$ if and only if $F_i\,v_{\overline{k}}=a\,v_{\overline{j}}$. In the case when $15\le j\le 17$ and $k=6,7,14$, we have $E_i\,v_j=a\,v_k$ if and only if $F_i\,v_j=a\,v_{\overline{k}}$, and $E_i\,v_{\ol{k}}=(-1)^{s_i}a\,v_{j}$ if and only if $F_i\,v_k=a\,v_j$. The numbering of the vectors $v_1,\dots,v_{31}$ is chosen such that $v_{8},\dots,v_{14}$ and $v_{\overline{14}},\dots,v_{\overline{8}}$ are vectors of odd parity, and the rest of the vectors are of even parity. 

\begin{figure}[ht!]
    \centering
\ddia{row sep=12pt, font=\footnotesize}{
\& \& v_1(4,0,0) \ar[ld,blue,"{[4]}"] \& \& \\
\& v_8(4,1,0) \ar[d,red] \& \& \& \\
\& v_9(3,-1,1) \ar[rd,mygreen]\ar[ld,blue,"{[3]}"] \& \& \& \\
v_2(3,0,1) \ar[rd,mygreen] \& \& v_{10}(3,2,-1) \ar[d,red]\ar[ld,blue,"{[3]}"] \& \& \\
\& v_3(3,3,-1) \ar[d,red] \& v_{11}(2,0,0)\ar[d,red,"{[2]}",pos=0.3]\ar[ld,blue,"{[2]}"] \& \& \\
\& v_4(2,1,0)\ar[d,red,"{[2]}",pos=0.3] \& v_{12}(1,-2,1)\ar[ld,blue]\ar[rd,mygreen] \& \& \\
\& v_5(1,-1,1)\ar[d,red,"{[3]}",pos=0.3]\ar[rd,mygreen] \& \& v_{13}(1,1,-1)\ar[ld,blue]\ar[d,red] \& \\
\& v_7(0,-3,2)\ar[rrd,mygreen] \& v_6(1,2,-1)\ar[d,red] \& v_{14}(0,-1,0)\ar[lld,blue] \& \\
\& v_{15}(0,0,0)\ar[rd,red] \& v_{16}(0,0,0)\ar[ld,blue]\ar[d,red,"{[2]}",pos=0.3]\ar[rd,mygreen,"{[3]}",pos=0.1] \& v_{17}(0,0,0)\ar[d,mygreen,"{[2]_3}",pos=0.3]\ar[ld,red] \& \\
\& v_{\overline{14}}(0,1,0)\ar[d,red] \& v_{\overline{6}}(-1,-2,1)\ar[ld,blue]\ar[rd,mygreen] \& v_{\overline{7}}(0,3,-2)\ar[d,red] \& \\
\& v_{\overline{13}}(-1,-1,1)\ar[rd,mygreen] \& \& v_{\overline{5}}(-1,1,-1)\ar[ld,blue]\ar[d,red,"{[2]}",pos=0.3] \& \\
\& \& v_{\overline{12}}(-1,2,-1)\ar[d,red] \& v_{\overline{4}}(-2,-1,0)\ar[ld,blue]\ar[d,red,"{[3]}",pos=0.3] \& \\
\& \& v_{\overline{11}}(-2,0,0)\ar[d,red,"{[2]}",pos=0.3] \& v_{\overline{3}}(-3,-3,1)\ar[ld,blue]\ar[rd,mygreen] \& \\
\& \& v_{\overline{10}}(-3,-2,1)\ar[rd,mygreen] \& \& v_{\overline{2}}(-3,0,-1)\ar[ld,blue] \\
\& \& \& v_{\overline{9}}(-3,1,-1)\ar[d,red] \& \\
\& \& \& v_{\overline{8}}(-4,-1,0)\ar[ld,blue] \& \\
\& \& v_{\overline{1}}(-4,0,0) \& \&
}
\caption{The 31-dimensional quantized adjoint module for $U_q$G$_{2\vert 1}$}
    \label{fig:G3 e-f action}
\end{figure}

\begin{prp}
    The basis $\{v_i\}_{i=1}^{31}$ exists.
\end{prp}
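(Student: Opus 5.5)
The plan is to verify directly that the operators read off from Figure \ref{fig:G3 e-f action} satisfy every defining relation of $U_q$G$_{2\vert 1}$ from Definition \ref{DJ:super}, restricted to the finite nodes $i=1,2,3$, together with the higher order Serre relations (cf. Proposition 6.3.1 in \cite{Y99}). Concretely, I would define $F_i$ as the $31\times 31$ matrices given by the arrows, with the quantum-number coefficients shown, and $E_i$ by the reflection rule stated before the figure. The rule is $E_i v_j=a v_k \iff F_i v_{\ol k}=a v_{\ol j}$ in the upper half and carries a sign $(-1)^{s_i}$ in the lower half, with the special rule near the zero-weight vectors $v_{15},v_{16},v_{17}$. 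I would let $K_i$ act diagonally by $q_i^{\lambda_i}$ on each $v_j$ of the displayed weight $\lambda$, with $d_1=-1,d_2=1,d_3=3$. The parities are those stated ($v_8,\dots,v_{14}$ and their bars odd).

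The checks go in order of increasing difficulty. First, the weight relations $K_iE_jK_i^{-1}=q^{B_{ij}}E_j$ and the analogous ones for $F_j$: for these it suffices to confirm that every arrow of color $j$ shifts the displayed weight by exactly the $j$-th column of the Cartan matrix, which is a bookkeeping check on the figure. Next, $E_1^2=F_1^2=0$ for the fermionic node, and the commutation $E_1E_3+E_3E_1=0$ type relations for the non-adjacent pair $(1,3)$ (with $B_{13}=0$, so ordinary commutators since $s_3=0$); these reduce to checking that blue–green paths of length two cancel or are absent. Then comes the main relation $E_iF_j-(-1)^{s_is_j}F_jE_i=\delta_{ij}[K_i]_{q_i}$. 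For $i\ne j$ this is again a path-counting check on squares in the diagram. For $i=j$, on each weight vector it reduces to the statement that, within each $i$-string, the coefficients form a $U_{q_i}\mathfrak{sl}_2$ representation. For the fermionic node it reduces to a $U_q\mathfrak{gl}(1|1)$-type condition $E_1F_1+F_1E_1=[\lambda_1]_{-1}$ on each two-dimensional block.

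The main obstacle is the zero-weight space $\mathrm{span}(v_{15},v_{16},v_{17})$ and its neighbours $v_6,v_7,v_{14}$ and $v_{\ol6},v_{\ol7},v_{\ol{14}}$. There several arrows of each color meet, so the relation $[E_i,F_i]=[K_i]$ (which must vanish there) is a genuine $3\times 3$ matrix identity whose entries involve $[2],[3],[2]_3$. The cubic and quartic Serre relations for the double/triple bond between nodes 2 and 3 also require summing over many paths through this region. I would first compute $E_iF_i$ and $F_iE_i$ restricted to the weight-zero space, using the reflection rule for the $E$'s, and check that the difference is zero. If it is not, the coefficients on arrows out of $v_{16}$ (e.g.\ the $[3]$ and $[2]_3$) fix the normalization. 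For the Serre relations I would use the fact that the module is finite-dimensional and weight-graded. Each Serre element has a nonzero weight, so only the finitely many vectors with a path of the appropriate length need be checked. This is done by direct (computer-assisted) multiplication of the explicit matrices, exactly as in the analogous propositions for D$_{2\vert1;\alpha}$ and F$_{3\vert1}$. As a sanity check, at $q\to1$ the module should reduce to the adjoint representation of G$_{2\vert1}$, confirming that the dimension $31=17+14$ and the weights are right.
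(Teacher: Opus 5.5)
Your proposal is correct and follows the paper's approach. The paper's proof consists solely of explicitly checking all defining relations of $U_q$G$_{2\vert 1}$, Serre relations included, on the operators read off from the figure, which is the brute-force (computer-assisted) verification you describe. One small slip: because $s_1s_3=0$, the relation for the pair $(1,3)$ is the commutator $E_1E_3-E_3E_1=0$, not an anticommutator, as you yourself go on to note.
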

\begin{proof}
    We explicitly check all the relations of $U_q$G$_{2\vert 1}$. 
\end{proof}

After a $q\to 1$ limit, the module $L_{(4,0,0)}$ is the adjoint module for G$_{2\vert 1}$ and reflects the structure of the Lie superalgebra G$_{2\vert 1}$ itself. The even part of G$_{2\vert 1}$ splits as $\mathfrak{sl}_2\oplus $G$_2$. In our chosen basis, up to a $q\to 1$ limit, the vectors $v_1$, $v_{15}$, $v_{\overline{1}}$ form a basis of the subalgebra $\mathfrak{sl}_2$ of the even part, while the vectors $v_2,\dots,v_{7}$, $v_{16},v_{17}$, $v_{\overline{7}},\dots,v_{\overline{2}}$ form a basis of the subalgebra G$_2$ of the even part. The vectors $v_{8},\dots,v_{14}$ form the first fundamental representation of G$_2$, and so do the vectors $v_{\overline{14}},\dots, v_{\overline{8}}$. Together, they form a representation of $\mathfrak{sl}_2\oplus$G$_2$, namely the $2$-dimensional vector representation of $\mathfrak{sl}_2$ tensor the first fundamental representation of G$_2$. This tensor product is the odd part of the superalgebra G$_{2\vert 1}$.

\subsection{Quantum affine algebra and \texorpdfstring{$q$}--characters}

We consider the distinguished Dynkin diagram where one node is ferminonic and the affine node attaches to this fermionic node, as shown below:

\begin{center}
\begin{tikzpicture}
    [root/.style={circle, draw, thick, minimum size=6pt, inner sep=0pt}]
    \node[root] at (0,0) {};
    \node at (1,0) {$\textcolor{blue}{\otimes}$};
    \node[root, color=red] at (2,0) {};
    \node[root, color=mygreen] at (3,0) {};

    \node at (2.5,0) {\scalebox{1.5}{$<$}};
    \draw (2.9,0)--(2.1,0);
    \draw (2.95,0.1)--(2.05,0.1);
    \draw (2.95,-0.1)--(2.05,-0.1);
    \draw (1.1,0) -- (1.9,0);
    \draw (0.1,0.03) -- (0.9,0.03);
    \draw (0.05,0.1) -- (0.95,0.1);
    \draw (0.1,-0.03) -- (0.9,-0.03);
    \draw (0.05,-0.1) -- (0.95,-0.1);

    \node at (0,-0.3) {0};
    \node at (1,-0.3) {1};
    \node at (2,-0.3) {2};
    \node at (3,-0.3) {3};
\end{tikzpicture}\ .
\end{center}
The corresponding Cartan matrix is given by 
$$\begin{bmatrix}
    2 & -1 & 0 & 0 \\
    -4 & 0 & 1 & 0 \\
    0 & -1 & 2 & -3 \\
    0 & 0 & -1 & 2
\end{bmatrix}\ .$$
We choose $d_0=-4$, $d_1=-1$, $d_2=1$, $d_3=3$ in this case.

The affine roots,  see \eqref{affine roots}, are given by 
\bee{
A_{1,a}=\mr{2}_a^{-1}\ ,\quad A_{2,a}=\mr{1}_{q^{-1}a}^{qa}\mr{2}_{qa}\mr{2}_{q^{-1}a}\mr{3}_a^{-1}\ ,\quad A_{3,a}=\mr{3}_{q^3a}\mr{3}_{q^{-3}a}\mr{2}_{q^2a}^{-1}\mr{2}_a^{-1}\mr{2}_{q^{-2}a}^{-1}\ .
}
Here $\mr{1}_b^a$, $\mr{2}_a$, $\mr{3}_a$, $a,b\in\C^\times$, are triples of rational functions given by
\bee{
\mr{1}_b^a=\bigg(\sqrt{a/b}\frac{1-z/a}{1-z/b},1,1\bigg)\ ,\quad \mr{2}_a=\bigg(1,q^{-1}\frac{1-qz/a}{1-q^{-1}z/a},1\bigg)\ ,\quad \mr{3}_a=\bigg(1,1,q^{-3}\frac{1-q^{3}z/a}{1-q^{-3}z/a}\bigg)\ .
}

We construct a $32$-dimensional $U_q\tl{\text{G}}_{2\vert 1}$-module $\tl{L}_{(4,0,0)}$.

The $q$-character of $\tl{L}_{(4,0,0)}$ is shown in Figure \ref{fig:G3 affine e-f action} below. Here we denote $\mr{1}_{q^b}^{q^a}$ by $\mr{1}_b^a$ and $\mr{i}_{q^a}$ by $\mr{i}_a$ for $i=2,3$. We omit writing the shift of the affine roots on the corresponding arrows, but it is clear from the corresponding $\ell$-weights. The supplement for this module is given as follows. The constants of the action of $X_i^-(z)$ are shown in Figure \ref{fig:G3 affine e-f action}. Here all the numbers on arrows are quantum numbers, and if a number is not shown on an arrow then it is one. As before, we have $\ol{i}=32-i$. The constants of action of $X_i^+(z)$ can be obtained from those of $X_i^-(z)$ shown in the diagram in Figure \ref{fig:G3 affine e-f action}. Specifically,  when $1\le j,k\le 14$, we have $X_i^+(z)\,\tl v_j=a\,\delta(z/q^s)\,\tl v_k$ if and only if $X_i^-(z)\,\tl v_{\overline{k}}=a\,\delta(z/q^s)\,\tl v_{\overline{j}}$, and when $18\le j,k\le 31$ we have $X_i^+(z)\,\tl v_j=(-1)^{s_i}a\,\D(z/q^s)\,\tl v_k$ if and only if $X_i^-(z)\,\tl v_{\overline{k}}=a\,\D(z/q^s)\,\tl v_{\overline{j}}$. In the case when $j=15,16,17,32$ and $k=6,7,14$, we have $X_i^+(z)\,\tl v_j=a\,\D(z/q^s)\,\tl v_k$ if and only if $X_i^-(z)\,\tl v_j=a\,\D(z/q^s)\,\tl v_{\overline{k}}$, and $X_i^+(z)\,\tl v_{\ol{k}}=a\,\D(z/q^s)\,\tl v_{j}$ if and only if $X_i^-(z)\,\tl v_k=a\,\D(z/q^s)\,\tl v_j$.
\begin{figure}[ht!]
    \centering
\ddia{row sep=10pt, column sep=-1pt}{
\& \& \tl{v}_1\big(\mr{1}_0^8\big) \ar[ld,blue,"{[4]}"] \& \& \\
\& \tl{v}_8\big(\mr{1}_0^8\, \mr{2}_0\big) \ar[d,red] \& \& \& \\
\& \tl{v}_9\big(\mr{1}_2^8\,\mr{2}_2^{-1}\mr{3}_1\big) \ar[rd,mygreen]\ar[ld,blue,"{[3]}"] \& \& \& \\
\tl{v}_2\big(\mr{1}_2^8\,\mr{3}_1\big) \ar[rd,mygreen] \& \& \tl{v}_{10}\big(\mr{1}_2^8\,\mr{2}_4\mr{2}_6\mr{3}_7^{-1}\big) \ar[d,red]\ar[ld,blue,"{[3]}"] \& \& \\
\& \tl{v}_3\big(\mr{1}_2^8\, \mr{2}_2\mr{2}_4\mr{2}_6\mr{3}_7^{-1}\big) \ar[d,red] \& \tl{v}_{11}\big(\mr{1}_2^6\,\mr{2}_4\mr{2}_8^{-1}\big) \ar[d,red,"{[2]}",pos=0.25]\ar[ld,blue,"{[2]}"] \& \& \\
\& \tl{v}_4\big(\mr{1}_2^6\,\mr{2}_2\mr{2}_4\mr{2}_8^{-1}\big) \ar[d,red,"{[2]}",pos=0.25] \& \tl{v}_{12}\big(\mr{1}_2^4\,\mr{2}_6^{-1}\mr{2}_8^{-1}\mr{3}_5\big) \ar[ld,blue]\ar[rd,mygreen] \& \& \\
\& \tl{v}_5\big(\mr{1}_2^4\,\mr{2}_2\mr{2}_6^{-1}\mr{2}_8^{-1}\mr{3}_5\big) \ar[d,red,"{[3]}",pos=0.25]\ar[rd,mygreen] \& \& \tl{v}_{13}\big(\mr{1}_2^4\,\mr{2_{10}\mr{3}_{11}^{-1}}\big) \ar[ld,blue]\ar[d,red] \& \\
\& \tl{v}_7\big(\mr{2}_4^{-1}\mr{2}_6^{-1}\mr{2}_8^{-1}\mr{3}_3\mr{3}_5\big) \ar[rrd,mygreen,"{[4]}"',pos=0.15]\ar[rrrd,mygreen] \& \tl{v}_6\big(\mr{1}_2^4\,\mr{2}_2\mr{2}_{10}\mr{3}_{11}^{-1}\big) \ar[d,red,"{[5]}",pos=0.1]\ar[rd,red,"{[3]}",pos=0.15] \& \tl{v}_{14}\big(\mr{1}_{2,12}^{4,10}\,\mr{2}_{12}^{-1}\big) \ar[lld,blue,"{-[4]}",pos=0.95] \ar[ld,blue,"{[4]}",pos=0.1] \&
\\ [0.5cm]
\& \tl{v}_{15}\big(\mr{1}_{2,12}^{4,10}\big) \ar[d, blue,"{1/[5]}",pos=0.25] \& \tl{v}_{16}\big(\mr{1}_{2,12}^{4,10}\,\mr{2}_2\mr{2}_{12}^{-1}\big) \ar[ld,blue,"{1/[5]}",pos=0.8] \ar[d,red,"{1/[4]}",pos=0.25] \& \tl{v}_{17}\big(\mr{2}_{10}\mr{2}_4^{-1}\mr{3}_3\mr{3}_{11}^{-1}\big) \ar[d,mygreen]\ar[ld,red,"{1/[4]}",pos=0.3] \& \tl{v}_{32}\big(\mr{3}_5\mr{3}_9^{-1}\big) \ar[ld,mygreen,"{-[2]}",pos=0.3] \\
\& \tl{v}_{\ol{14}}\big(\mr{1}_{2,12}^{4,10}\,\mr{2}_2\big) \ar[d,red] \& \tl{v}_{\ol{6}}\big(\mr{1}_{12}^{10}\,\mr{2}_4^{-1}\mr{2}_{12}^{-1}\mr{3}_3\big) \ar[ld,blue]\ar[rd,mygreen] \& \tl{v}_{\ol{7}}\big(\mr{2}_6\mr{2}_8\mr{2}_{10}\mr{3}_9^{-1}\mr{3}_{11}^{-1}\big) \ar[d,red] \& \\
\& \tl{v}_{\ol{13}}\big(\mr{1}_{12}^{10}\,\mr{2}_4^{-1}\mr{3}_3\big) \ar[rd,mygreen] \& \& \tl{v}_{\ol{5}}\big(\mr{1}_{12}^{10}\,\mr{2}_6\mr{2}_8\mr{2}_{12}^{-1}\mr{3}_9^{-1}\big) \ar[ld,blue]\ar[d,red,"{[2]}",pos=0.25] \& \\
\& \& \tl{v}_{\ol{12}}\big(\mr{1}_{12}^{10}\,\mr{2}_6\mr{2}_8\mr{3}_9^{-1}\big) \ar[d,red] \& \tl{v}_{\ol{4}}\big(\mr{1}_{12}^8\,\mr{2}_6\mr{2}_{10}^{-1}\mr{2}_{12}^{-1}\big) \ar[ld,blue]\ar[d,red,"{[3]}",pos=0.25] \& \\
\& \& \tl{v}_{\ol{11}}\big(\mr{1}_{12}^8\,\mr{2}_6\mr{2}_{10}^{-1}\big) \ar[d,red,"{[2]}",pos=0.25] \& \tl{v}_{\ol{3}}\big(\mr{1}_{12}^6\,\mr{2}_8^{-1}\mr{2}_{10}^{-1}\mr{2}_{12}^{-1}\mr{3}_7\big) \ar[ld,blue]\ar[rd,mygreen] \& \\
\& \& \tl{v}_{\ol{10}}\big(\mr{1}_{12}^6\,\mr{2}_8^{-1}\mr{2}_{10}^{-1}\mr{3}_7\big)\ar[rd,mygreen] \& \& \tl{v}_{\ol{2}}\big(\mr{1}_{12}^6\,\mr{3}_{13}^{-1}\big) \ar[ld,blue] \\
\& \& \& \tl{v}_{\ol{9}}\big(\mr{1}_{12}^6\,\mr{2}_{12}\mr{3}_{13}^{-1}\big) \ar[d,red] \& \\
\& \& \& \tl{v}_{\ol{8}}\big(\mr{1}_{14}^6\,\mr{2}_{14}^{-1}\big) \ar[ld,blue] \& \\
\& \& \tl{v}_{\ol{1}}\big(\mr{1}_{14}^6\big) \& \&
}
\caption{The $q$-character and the supplement of the smallest non-trivial module for $U_q\tl{\text{G}}_{2\vert 1}$}
    \label{fig:G3 affine e-f action}
\end{figure}

The restriction of $\tl L_{(4,0,0)}(z)$ to $U_q$G$_{2\vert 1}$ decomposes as 
$$\tl{L}_{(4,0,0)}(a)\cong L_{(4,0,0)}\oplus L_{(0,0,0)}\ .$$

We now describe the action of $U_q\tl{\text{G}}_{2\vert 1}$ on $\tl{L}_{(4,0,0)}(z)$ in the Drinfeld-Jimbo realization. We choose a basis $\{v_i\}_{i=1}^{31}\cup\{v_{32}\}$ for $\tl{L}_{(4,0,0)}(z)$,
 where $\{v_i\}_{i=1}^{31}$ is the basis of $U_q$G$_{2\vert 1}$-module $L_{(4,0,0)}$ as in Figure \ref{fig:G3 e-f action} and $v_{32}$ is a basis of the $1$-dimensional $U_q$G$_{2\vert 1}$-module $L_{(0,0,0)}$. Note that $v_i$ are scalar multiples of $\tl{v}_i$ for all $i$, except for $i=15,16,17,32$.

Let
\eq{\label{superG:E0 action}
E_0=z\,\bigg( -\frac{[2]_6}{[3]^{\mr{i}}}E_{15,1} + [4]\,E_{16,1} - \frac{[2]_2[3]}{[3]^{\mr{i}}} E_{17,1} + \frac{[3]}{[3]^{\mr{i}}} E_{32,1} + E_{\overline{1},15} + \big([2]^{\mr{i}}\big)^2 E_{\overline{1},32} +\sum_{i=8}^{14}E_{10+i,i}\bigg)\ ,
}
\eq{\label{superG: F0 action}
F_0=z^{-1}\,\bigg( \frac{[2]_6}{[3]^{\mr{i}}} E_{15,\overline{1}} - [4]\, E_{16,\overline{1}} + \frac{[2]_2[3]}{[3]^{\mr{i}}} E_{17,\overline{1}} - \frac{[3]}{[3]^{\mr{i}}} E_{32,\overline{1}} - E_{1,15} -\big([2]^{\mr{i}}\big)^2 E_{1,32} + \sum_{i=8}^{14}E_{i,10+i} \bigg)\ .
}
where $E_{ij}$ are matrix units, that is, $E_{ij}v_k=\D_{jk}v_i$.

\begin{prp}
    The action of $E_i, F_i$, $i=1,2,3,$ as in Figure  \ref{fig:G3 e-f action} and $E_0, F_0$ as in \eqref{superG:E0 action}, \eqref{superG: F0 action}  gives a $U_q\tl{\text{G}}_{2\vert 1}$-module structure.
\end{prp}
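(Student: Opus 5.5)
The statement is purely a finite verification: with $E_i,F_i$ ($i=1,2,3$) from Figure \ref{fig:G3 e-f action} extended by zero on $v_{32}$, and $E_0,F_0$ from \eqref{superG:E0 action}, \eqref{superG: F0 action}, all defining relations of Definition \ref{DJ:super} must hold on the $32$-dimensional space. We also set $K_j=q^{\tl B_{j\cdot}\,\mathrm{wt}}$ on weight vectors and take $K_0$ defined by $K_0^{a_0}K_1^{a_1}K_2^{a_2}K_3^{a_3}=1$. Since the proposition about the basis $\{v_i\}_{i=1}^{31}$ already gives a $U_q$G$_{2\vert 1}$-module, and $v_{32}$ spans a trivial summand, every relation involving only indices $i,j\in\{1,2,3\}$ is already known. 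It remains to check the relations that involve the node $0$.

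\textbf{Cartan relations.} First I would check that $K_0$ acts diagonally and that $E_0$, $F_0$ shift weight by $\pm\A_0=\mp\theta$, where $\theta$ is the weight $(4,0,0)$ of $v_1$. Inspecting the matrix units in $E_0$ confirms this: $v_1\mapsto v_{15},v_{16},v_{17},v_{32}$ (weight $4\to0$), $v_{15},v_{32}\mapsto v_{\ol 1}$ ($0\to-4$), and $v_i\mapsto v_{10+i}$ for $8\le i\le14$. The last map sends $v_i$ to $v_{\ol{22-i}}$, and the weights in the figure differ by exactly $\theta$. For instance, $v_8$ has weight $(4,1,0)$ and $v_{18}=v_{\ol{14}}$ has weight $(0,1,0)$. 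Hence $K_iE_0K_i^{-1}=q^{\tl B_{i0}}E_0$ for all $i$.

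\textbf{Commutator and Serre relations.} Next I would verify $E_iF_0-(-1)^{s_is_0}F_0E_i=0$ for $i=1,2,3$, and similarly with $E_0,F_i$. Only node $1$ can give nontrivial contributions, since nodes $2,3$ are orthogonal to $\A_0$ in the affine diagram. These amount to comparing a handful of matrix entries around $v_1,v_8,\dots,v_{14}$ and the weight-zero space, with the signs $(-1)^{s_i}$ tracked carefully for the odd node $1$ and odd vectors. Then I would check the key relation
\[
E_0F_0-F_0E_0=\frac{K_0-K_0^{-1}}{q_0-q_0^{-1}},\qquad q_0=q^{-4}.
\]
This is a diagonal computation on each weight space. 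The only nontrivial ones are the blocks at $v_1,v_{\ol1}$ and on $\mathrm{span}(v_{15},v_{16},v_{17},v_{32})$, where the off-diagonal entries must cancel. This cancellation is exactly what fixes the coefficients $[2]_6/[3]^{\mr i}$, $[4]$, $[2]_2[3]/[3]^{\mr i}$, $[3]/[3]^{\mr i}$ and $([2]^{\mr i})^2$. Finally I would check the Serre relations involving node $0$: the relations $E_0E_j=E_jE_0$ for $j=2,3$, and the quantum Serre relation between the even node $0$ and the odd node $1$, following Proposition 6.3.1 of \cite{Y99} (for $\tl B_{01}\ne0$ with $0$ even, a degree-$2$ relation in $E_0$). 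All of these are checked for $F$ as well.

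\textbf{Main obstacle.} The hard part is the bookkeeping rather than any single identity. One must use consistent super-sign conventions for odd vectors and the odd node $1$, and get the higher Serre relations of \cite{Y99} for the nonsimply-laced affine G$_{2\vert 1}$ diagram exactly right. I would do this by computer algebra over $\C(q)$: build the $32\times32$ matrices for all generators and evaluate every relation symbolically. By hand I would only record the cancellation on the weight-zero block, as a sanity check that the chosen constants are forced. As a cross-check that the module has the expected affine structure, I would also confirm that $E_0$ and $F_0$ agree, up to the stated rescaling of $\tl v_i$, with the action of the lowest-degree Drinfeld currents read off from Figure \ref{fig:G3 affine e-f action}.
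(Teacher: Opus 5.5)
Your overall route is the paper's. Its proof is just a direct check of all defining relations on the explicit $32\times32$ matrices, and a complete symbolic evaluation of every relation, as in your last paragraph, does prove the proposition. However, two of the structural claims you use to organize, and by hand to shortcut, that check are false. They point to the wrong places for where the content of the verification lies.

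\textbf{Nodes $2,3$ do matter.} The equalities $\tl{B}_{20}=\tl{B}_{30}=0$ only give the Serre relations $E_0E_j=E_jE_0$ and $F_0F_j=F_jF_0$ for $j=2,3$. They do not make $E_jF_0=F_0E_j$ or $E_0F_j=F_jE_0$ automatic. For instance, $E_2v_{\ol{1}}=E_3v_{\ol{1}}=0$ by weights. On the other hand, the reflection rule gives $E_2v_{15}=v_6$, $E_2v_{16}=[2]v_6$, $E_2v_{17}=v_6$, $E_3v_{16}=[3]v_7$ and $E_3v_{17}=[2]_3v_7$. Hence
\[
(E_2F_0-F_0E_2)v_{\ol{1}}=z^{-1}\Big(\frac{[2]_6+[2]_2[3]}{[3]^{\mr{i}}}-[2][4]\Big)v_6,\qquad (E_3F_0-F_0E_3)v_{\ol{1}}=z^{-1}[3]\Big(\frac{[2]_2[2]_3}{[3]^{\mr{i}}}-[4]\Big)v_7 .
\]
These vanish only because of the identities $[2]_6+[2]_2[3]=[2][4][3]^{\mr{i}}$ and $[2]_2[2]_3=[4][3]^{\mr{i}}$. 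They are exactly what fixes the ratios of the coefficients of $v_{15},v_{16},v_{17}$ in $F_0v_{\ol{1}}$, and dually in $E_0v_1$. The $i=1$ relation only fixes the overall scale.

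\textbf{The weight-zero block of $E_0F_0-F_0E_0$ fixes nothing.} On $\mathrm{span}(v_{15},v_{16},v_{17},v_{32})$ one has $F_0w=-z^{-1}\phi(w)v_1$ and $E_0w=z\phi(w)v_{\ol{1}}$ with the same functional $\phi$: $\phi(v_{15})=1$, $\phi(v_{32})=([2]^{\mr{i}})^2$, $\phi(v_{16})=\phi(v_{17})=0$. Moreover $E_0v_1=-z^2F_0v_{\ol{1}}$. Therefore $E_0F_0w=F_0E_0w=z\phi(w)F_0v_{\ol{1}}$, whatever common values the five constants take in $E_0$ and $F_0$.

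The genuine constraint from this relation is the diagonal entry at $v_1$ (equivalently at $v_{\ol{1}}$). Since $K_0v_1=q^8v_1$ and $q_0=q^{-4}$, it reads $\big([2]_6-[3]([2]^{\mr{i}})^2\big)/[3]^{\mr{i}}=[2]_4$. This fixes the product of the two $v_{32}$-coefficients.

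So the hand ``sanity check'' you propose is vacuous. If you had used ``only node $1$ matters'' to prune the computation, you would have skipped the relations that actually carry content. Drop both reductions and run the full check, and your argument coincides with the paper's.
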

\begin{proof}
    We check all the relations in $U_q\tl{\text{G}}_{2\vert 1}$ by a direct computation.
\end{proof}

\subsection{The \texorpdfstring{$R$}--Matrix}
We start studying the tensor square of $\tl{L}_{(4,0,0)}$. This module is completely reducible.
\begin{thm}
As $U_q$G$_{2\vert 1}$-modules, we have 
\eq{\label{superG: tensor adjoint}
\big(\tl{L}_{(4,0,0)}(z)\big)^{\otimes 2}\cong\big(\underbracket[0.1ex]{L_{(4,0,0)}}_{31}\oplus \underbracket[0.1ex]{L_{(0,0,0)}}_{1}\big)^{\otimes 2}\cong \underbracket[0.1ex]{L_{(8,0,0)}}_{192}\oplus \underbracket[0.1ex]{L_{(8,1,0)}}_{448}\oplus \underbracket[0.1ex]{L_{(7,0,1)}}_{289}\oplus\, 3\underbracket[0.1ex]{L_{(4,0,0)}}_{31}\oplus\, 2\underbracket[0.1ex]{L_{(0,0,0)}}_{1}\ .
}
\end{thm}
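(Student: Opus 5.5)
The plan mirrors the proofs of the analogous statements for D$_{2\vert 1;\alpha}$ and F$_{3\vert 1}$: a direct computation inside $\big(L_{(4,0,0)}\oplus L_{(0,0,0)}\big)^{\otimes 2}$, using the explicit basis of Figure \ref{fig:G3 e-f action} and the symmetric coproduct $\Delta$. The first isomorphism is the restriction $\tl L_{(4,0,0)}(z)\cong L_{(4,0,0)}\oplus L_{(0,0,0)}$ stated above. Distributing the tensor square gives
\[
L_{(4,0,0)}^{\otimes 2}\ \oplus\ L_{(4,0,0)}\otimes L_{(0,0,0)}\ \oplus\ L_{(0,0,0)}\otimes L_{(4,0,0)}\ \oplus\ L_{(0,0,0)}^{\otimes 2}.
\]
The last three pieces contribute two copies of $L_{(4,0,0)}$, generated by $v_1\otimes v_{32}$ and $v_{32}\otimes v_1$, and one trivial module $v_{32}\otimes v_{32}$. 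So the claim reduces to showing
\[
L_{(4,0,0)}^{\otimes 2}\cong L_{(8,0,0)}\oplus L_{(8,1,0)}\oplus L_{(7,0,1)}\oplus L_{(4,0,0)}\oplus L_{(0,0,0)}.
\]
The dimensions check out: $192+448+289+31+1=961=31^2$.

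\textbf{Step 1: find all singular vectors.} I would list the singular vectors of $L_{(4,0,0)}^{\otimes 2}$, i.e.\ solve $\Delta(E_i)u=0$ for $i=1,2,3$ in each relevant weight space. The candidate weights come from the $q\to 1$ decomposition of $\mathfrak g\otimes\mathfrak g$ for G$_{2\vert 1}$, read off the weights in Figure \ref{fig:G3 e-f action}: $(8,0,0)$, $(8,1,0)$, $(7,0,1)$, $(4,0,0)$, $(0,0,0)$. By analogy with the F$_{3\vert 1}$ case I expect the following generators.
\begin{itemize}
\item $u_1=v_1\otimes v_1$.
\item $u_2$, a $q$-twisted antisymmetrization of $v_1\otimes v_8$ and $v_8\otimes v_1$.
\item $u_3$, a combination of $v_1\otimes v_2$, $v_2\otimes v_1$, $v_8\otimes v_9$ and $v_9\otimes v_8$.
\item $u_{4a}$, built from $v_1\otimes v_{15},v_1\otimes v_{16},v_1\otimes v_{17}$ and their flips, plus the odd pairs $v_{8+j}\otimes v_{14-j}$.
\item $u_{5a}$, an invariant pairing $\sum \pm q^{\ast}\, v_i\otimes v_{\ol i}$ together with quadratic terms in $v_{15},v_{16},v_{17}$.
\end{itemize}
The coefficients are fixed by solving a small linear system in each weight space. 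I would also check that no further singular vectors exist in these weights; in particular, $\dim$ of the singular space in weight $(4,0,0)$ must be exactly one inside $L_{(4,0,0)}^{\otimes2}$.

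\textbf{Step 2: identify the submodules and show the sum is direct.} I would verify that each singular vector generates an irreducible highest weight module of the stated dimension, namely that the cyclic module $U_q\mathrm{G}_{2\vert 1}\cdot u$ has dimension equal to $\dim L_\lambda$. The dimensions $192$, $448$, $289$ come from the known superdimension/character formulas for typical and atypical G$_{2\vert 1}$-modules via the deformation result of \cite{G07}. If these five submodules intersect trivially and their dimensions add up to $961$, the sum is direct and exhausts the space, which gives complete reducibility.

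The main obstacle is exactly this last point. Over superalgebras, tensor products may fail to be completely reducible; the D$_{2\vert 1;\alpha}$ case above shows a non-split $W_a$. So one must rule out the situation where, say, the module generated by $u_1$ properly contains a copy of $L_{(4,0,0)}$ or $L_{(0,0,0)}$ as a subquotient. I would handle this by computing, weight space by weight space, the ranks of the spans of the generated submodules in the weights $(4,0,0)$ and $(0,0,0)$, and checking that their total equals the dimension of the weight space. A cheaper preliminary check is that the highest weights $(8,0,0),(8,1,0),(7,0,1)$ are typical, so their modules split off automatically. Then only the interaction between $L_{(4,0,0)}$ and $L_{(0,0,0)}$ needs the explicit rank computation, which is done by computer as in the previous sections.
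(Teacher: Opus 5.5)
Your main line is the paper's own argument. The paper does a direct (computer) computation in $\big(L_{(4,0,0)}\oplus L_{(0,0,0)}\big)^{\otimes 2}$ of the eight singular vectors $u_1,u_2,u_3,u_{4a},u_{4b},u_{4c},u_{5a},u_{5b}$ and of the submodules they generate. Your Step~2 criterion is also sound:
\begin{itemize}
\item a cyclic module on a singular vector of weight $\lambda$ whose dimension equals $\dim L_\lambda$ must be $L_\lambda$;
\item irreducible submodules with pairwise distinct highest weights automatically sum directly;
\item so $192+448+289+31+1=961$ gives exhaustion.
\end{itemize}

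The one thing to correct is your ``cheaper preliminary check'': $(7,0,1)$ is \emph{not} typical.

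\textbf{Root data.} Realize the roots by $\alpha_1=\delta+\varepsilon_3$, $\alpha_2=\varepsilon_1$, $\alpha_3=\varepsilon_2-\varepsilon_1$, with $\varepsilon_1+\varepsilon_2+\varepsilon_3=0$, $(\delta,\delta)=-2$ and $(\varepsilon_i,\varepsilon_j)=3\delta_{ij}-1$. This reproduces $B=DC$ and the weights in Figure~\ref{fig:G3 e-f action}. The weight $(a,b,c)$ is then $k\delta+(b+c)\varepsilon_1+(b+2c)\varepsilon_2$ with $k=(a-2b-3c)/2$, and $\rho=-\tfrac52\delta+2\varepsilon_1+3\varepsilon_2$.

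\textbf{Atypicality of $(7,0,1)$.} For $\lambda=(7,0,1)=2\delta+\varepsilon_1+2\varepsilon_2$ one gets $(\lambda+\rho,\delta-\varepsilon_1)=0$, and $\delta-\varepsilon_1$ is an isotropic odd root. Consistently, $289$ is odd. A typical module has dimension $2^6(2k-5)\dim V_{G_2}(b,c)$. This gives exactly $192$ and $448$ for $(8,0,0)$ and $(8,1,0)$, which are genuinely typical.

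\textbf{Consequence for your plan.} Typicality cannot be used to split off $L_{(7,0,1)}$. Your reduction to ``only the interaction between $L_{(4,0,0)}$ and $L_{(0,0,0)}$ needs the explicit rank computation'' is therefore unjustified as stated. You must keep the explicit check that $u_3$ generates a $289$-dimensional module, which your Step~2 already prescribes.

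A conceptual shortcut that does work: $(\lambda,\lambda+2\rho)$ takes the pairwise distinct values $8,24,36,12,0$ on $(8,0,0),(8,1,0),(7,0,1),(4,0,0),(0,0,0)$. So a Casimir (central character) argument separates all five isotypic parts, atypical ones included.
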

\p{We make explicit computations in $\tl{L}_{(4,0,0)}^{\otimes 2}$ to  observe the following.
There are $8$ linearly independent singular vectors, which we enumerate as $u_1,u_2,u_3,u_{4a},u_{4b},u_{4c},u_{5a},u_{5b}$. The singular vector $u_1=v_1\otimes v_1$ generates $L_{(8,0,0)}$, the singular vector $u_2=q^{-2}v_1\otimes v_{8}-q^2 v_{8}\otimes v_1$ generates $L_{(8,1,0)}$, and the singular vector $u_3=q^{-3}v_1\otimes v_2+q^3 v_2\otimes v_1 - q^{1/2} v_{8}\otimes v_{9}+q^{-1/2}v_{9}\otimes v_{8}$ generates $L_{(7,0,1)}$. There is a natural choice for the three singular vectors $u_{4a}$, $u_{4b}$, $u_{4c}$ of weight $(4,0,0)$. We choose $u_{4b}=v_1\otimes v_{32}$, $u_{4c}=v_{32}\otimes v_1$, and $u_{4a}\in L_{(4,0,0)}^{\otimes 2}$ is given by 
\bee{
u_{4a} =\ & -[3]_2^{\mr{i}}\,(q^3v_1\otimes v_{15}-q^{-3}v_{15}\otimes v_1)+[2]_3\,(q^3v_1\otimes v_{16}-q^{-3}v_{16}\otimes v_1)-[3]\,(q^3v_1\otimes v_{17}-q^{-3}v_{17}\otimes v_1) \\
& -[2]_3\,(q^5v_8\otimes v_{14}+q^{-5}v_{14}\otimes v_8-q^4v_9\otimes v_{13}-q^{-4}v_{13}\otimes v_9+qv_{10}\otimes v_{12}+q^{-1}v_{12}\otimes v_{10}) + [3]^{\mr{i}} v_{11}\otimes v_{11}\ . 
}
Finally, the two singular vectors of weight zero are chosen so that $u_{5b}=v_{32}\otimes v_{32}$, and $u_{5a}\in L_{(4,0,0)}^{\otimes 2}$ is given by
\bee{
u_{5a}=\ & [3]^{\mr{i}}\,(q^{10}v_1\otimes v_{\ol{1}}+q^{-10}v_{\ol{1}}\otimes v_1) + [3]_2[4]\,(q^9v_2\otimes v_{\ol{2}}+q^{-9}v_{\ol{2}}\otimes v_2-q^6 v_3\otimes v_{\ol{3}}-q^{-6}v_{\ol{
3
}}\otimes v_3 \\
& + q^3 v_7\otimes v_{\ol{7}} + q^{-3}v_{\ol{7}}\otimes v_7) + [4][3]^{\mr{i}}\,(q^5v_4\otimes v_{\ol{4}}+q^{-5}v_{\ol{4}}\otimes v_4-q^4v_5\otimes v_{\ol{5}}-q^{-4}v_{\ol{5}}\otimes v_5+q v_6\otimes v_{\ol{6}} \\ 
& + q^{-1}v_{\ol{6}}\otimes v_6 + q^{10}v_8\otimes v_{\ol{8}}-q^{-10}v_{\ol{8}}\otimes v_8-q^9 v_9\otimes v_{\ol{9}} + q^{-9} v_{\ol{9}}\otimes v_9 + q^6 v_{10}\otimes v_{\ol{10}}-q^{-6}v_{\ol{10}}\otimes v_{10} \\ 
& + q^4 v_{12}\otimes v_{\ol{12}} -q^{-4}v_{\ol{12}}\otimes v_{12}-q v_{13}\otimes v_{\ol{13}}+q^{-1} v_{\ol{13}}\otimes v_{13}+v_{14}\otimes v_{\ol{14}}-v_{\ol{14}}\otimes v_{14}-v_{15}\otimes v_{16}-v_{16}\otimes v_{15}) \\
& - [2]_2[3]^{\mr{i}}\,(q^5 v_{11}\otimes v_{\ol{11}}-q^{-5} v_{\ol{11}}\otimes v_{11}) + [2]_2[3]_2^{\mr{i}}\,v_{15}\otimes v_{15} + [2]_2[3]\,(v_{15}\otimes v_{17}+v_{17}\otimes v_{15}-v_{17}\otimes v_{17})\ .
}
}

For $\la=(8,0,0),(8,1,0),(7,0,1),(4,0,0),(0,0,0)$, let $P_\la^q$ be the projector onto the summand $L_{\la}$ in the decomposition \eqref{superG: tensor adjoint}.

\begin{thm}
\label{superG:Rcheck}
In terms of projectors, we have 

\eq{\label{superG: Rqz}
\check{R}(z)=\ & P^q_{(8,0,0)}-q^8\frac{1-q^{-8}z}{1-q^8 z}\,P^q_{(8,1,0)}+q^{14}\frac{(1-q^{-6}z)(1-q^{-8}z)}{(1-q^6 z)(1-q^8 z)}\,P^q_{(7,0,1)}
+ \frac{q^5\ f_1(z)}{(1-q^{-4}z)(1-q^6 z)(1-q^8 z)}\otimes P^q_{(4,0,0)} \\
& + \frac{q^2\ f_0(z)}{(1-q^{-4}z)(1-q^{-6}z)(1-q^6 z)(1-q^8 z)}\otimes P^q_{(0,0,0)}\ ,
}
where 
$$f_1(z)=\begin{bmatrix}
    -q^{-3}+\A_q\,z-\A_{q^{-1}}\,z^2+q^3z^3 & \hspace{-20pt} \B\,z(1-z) & \hspace{-10pt} \B\,z(1-z) 
    \vspace{5pt} \\
    \G\,z(1-z) & \hspace{-20pt} \mu\,z(q^3+q^{-3}z) & \hspace{-10pt} (1-z)(q^3-\nu\,z+q^{-3}z^2) 
    \vspace{5pt} \\
    \G\,z(1-z) & \hspace{-20pt} (1-z)(q^3-\nu\,z+q^{-3}z^2) & \hspace{-10pt} \mu\,z(q^3+q^{-3}z)
\end{bmatrix} \ ,$$

$$ f_0(z)= \begin{bmatrix}
    q^{-6}-q^{-3}\zeta\,z+\xi\,z^2-q^3\zeta\,z^3+q^6z^4 & \eta\, z(1-z^2) 
    \vspace{5pt} \\
    \rho\, z(1-z^2) & q^{6}-q^{3}\zeta\,z+\xi\,z^2-q^{-3}\zeta\,z^3+q^{-6}z^4
\end{bmatrix} \ .
$$

\bigskip

Here, the constants $\A_q,\B,\G,\mu,\nu,\zeta,\xi,\eta,\rho\in\C(q)$ are given by 
\bee{
& \A_q=\frac{(q^5-q+2q^{-7}-q^{-9})\,[3]}{[3]^{\mr{i}}}\ ,\quad \B=\frac{([2]^{\mr{i}})^3\,[2]_2\,[4]}{[3]^{\mr{i}}}\ ,\quad \G=\frac{[2]^{\mr{i}}\,[3]_2^{\mr{i}}\,[2]\,([3])^2
}{[3]^{\mr{i}}}\ ,
\vspace{5pt} \\
& \mu=\frac{([2]^{\mr{i}})^3\,[3]\,[4]}{[3]^{\mr{i}}}\ ,\quad \nu=\frac{[2]_7+[2]_3-[2]}{[3]^{\mr{i}}}\ ,\quad \zeta= \frac{[4]\,[3]_2^{\mr{i}}}{[3]^{\mr{i}}}\ ,
\vspace{5pt} \\ 
& \xi=\frac{[2]_2\,[3]_2\,([2]_8-[2]_6-[2]_4+[3])}{[3]^{\mr{i}}}\ ,\quad \eta=\frac{([2]^{\mr{i}})^5\,[4]}{[3]^{\mr{i}}}\ ,\quad \rho=\frac{[2]^{\mr{i}}\,([3])^2\,[3]_3\,[4]}{[3]^{\mr{i}}}\ . 
}
\p{
We use the equation $$\check{R}(z)\,\Delta(F_0)=\Delta(F_0)\,\check{R}(z):\tl L_{(4,0,0)}(z)\otimes\tl{L}_{(4,0,0)}(1)\to \tl L_{(4,0,0)}(1)\otimes\tl{L}_{(4,0,0)}(z)\ ,$$ with $F_0$ in \eqref{superG: F0 action}. Equivalently, one can use $E_0$ in \eqref{superG:E0 action}.
}
\end{thm}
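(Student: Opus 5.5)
Since $\check{R}(z)$ must commute with $U_q\mathrm{G}_{2\vert 1}$, the plan is to write it in the most general $U_q\mathrm{G}_{2\vert 1}$-linear form allowed by the decomposition \eqref{superG: tensor adjoint}, and then fix the coefficients by imposing the single remaining relation
$$\check{R}(z)\,\Delta(F_0)=\Delta(F_0)\,\check{R}(z):\tl L_{(4,0,0)}(z)\otimes\tl L_{(4,0,0)}(1)\to \tl L_{(4,0,0)}(1)\otimes\tl L_{(4,0,0)}(z),$$
with $F_0$ as in \eqref{superG: F0 action}. The tensor square is completely reducible. The summands $L_{(8,0,0)},L_{(8,1,0)},L_{(7,0,1)}$ have multiplicity one, $L_{(4,0,0)}$ has multiplicity three and $L_{(0,0,0)}$ has multiplicity two. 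Hence
$$\check{R}(z)=a_1P^q_{(8,0,0)}+a_2P^q_{(8,1,0)}+a_3P^q_{(7,0,1)}+g_1(z)\otimes P^q_{(4,0,0)}+g_0(z)\otimes P^q_{(0,0,0)},$$
where $g_1$ is a $3\times3$ matrix and $g_0$ a $2\times2$ matrix. These matrices are written in the singular-vector bases $\{u_{4a},u_{4b},u_{4c}\}$ and $\{u_{5a},u_{5b}\}$ from the preceding theorem, so there are $3+9+4=16$ unknown scalar functions of $z$. We normalize by $\check{R}(z)(u_1\otimes u_1)$, i.e.\ $a_1=1$ on $v_1\otimes v_1$.

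Since $\check{R}(z)$ commutes with $U_q\mathrm{G}_{2\vert 1}$, checking the $F_0$-relation on highest weight vectors generates enough equations. We apply both sides to vectors that $\Delta(F_0)$ sends into spans of the singular vectors, and then compare coefficients of distinguished basis tensors such as $v_1\otimes v_{\bar 1}$ and $v_{15}\otimes v_1$. Concretely, we proceed in three steps.

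\begin{enumerate}
\item \emph{The scalar $a_2$.} Apply $\Delta(F_0)$ to $u_2$, which is linear in $v_1$ and $v_8$. Here $F_0v_8=v_{18}$, an odd vector at level $\bar{14}$, and $F_0v_1$ involves $v_{15},v_{16},v_{17},v_{32}$. Comparing with $\Delta(F_0)u_1$ gives a rank-one relation between $a_1$ and $a_2$, of the standard $\mathfrak{sl}_2$-type form $q^8(1-q^{-8}z)/(1-q^8z)$ up to sign.
\item \emph{The scalar $a_3$.} The same procedure on $u_3$ determines $a_3$ from $a_2$. The poles at $z=q^{-6}$ and $z=q^{-8}$ are read off from the denominators that appear.
\item \emph{The matrices $g_1,g_0$.} $\Delta(F_0)$ maps vectors of weight $(4,0,0)$, such as $u_1$-descendants of that weight and the vectors $u_{4\ast}$, to weight-zero vectors containing components along $u_{5a},u_{5b}$. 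Projecting the intertwining equation onto the $(4,0,0)$- and $(0,0,0)$-isotypic components gives a linear system coupling $g_1$ and $g_0$ to $a_1,a_2,a_3$. We solve this system with computer algebra over $\mathbb{C}(q,z)$.
\end{enumerate}

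The main obstacle is the multiplicity part. To extract the isotypic components, we must express vectors such as $(F_0\otimes K_0^{1/2})(\cdot)$ in terms of the singular vectors and their $U_q\mathrm{G}_{2\vert 1}$-descendants. This requires the explicit projectors $P^q_{(4,0,0)}$ and $P^q_{(0,0,0)}$, which we obtain on the $31$- and $1$-dimensional weight spaces by the lowering-operator and Casimir-free method. We would implement this by computing, at each relevant weight, a basis of the full weight space of the tensor square adapted to the decomposition, and inverting the change of basis.

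Once a solution is found, the final checks are as follows.
\begin{itemize}
\item Uniqueness: the solution space is one-dimensional, as expected from irreducibility for generic $z$ (thinness via the $q$-characters in Figure~\ref{fig:G3 affine e-f action}).
\item The equation with $E_0$ from \eqref{superG:E0 action} also holds.
\item Consistency: $\check{R}(z)\check{R}(z^{-1})=1$ with the appropriate ordering of factors.
\item The blocks match the stated constants $\A_q,\B,\G,\mu,\nu,\zeta,\xi,\eta,\rho$; the determinants of the multiplicity blocks provide a convenient consistency test.
\end{itemize}
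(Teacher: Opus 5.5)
Your overall strategy (a general $U_q\mathrm{G}_{2\vert 1}$-linear ansatz with $3+9+4$ unknowns, fixed by the $\Delta(F_0)$-intertwining equation solved by computer) is the paper's. However, the reduction you build on it fails as written, because you have interchanged $E_0$ and $F_0$.

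With $E_{ij}v_k=\delta_{jk}v_i$, the operator $F_0$ in \eqref{superG: F0 action} acts as follows:
\begin{itemize}
\item it sends $v_{\ol 1}$ into the span of $v_{15},v_{16},v_{17},v_{32}$;
\item it sends $v_{15},v_{32}$ to multiples of $v_1$, and $v_{10+i}\mapsto v_i$;
\item it kills $v_1,v_2,v_8,v_9$.
\end{itemize}
So $F_0$ \emph{raises} the $U_q\mathrm{G}_{2\vert 1}$-weight by $\theta=(4,0,0)$. The actions you ascribe to $F_0$ ($v_1\mapsto v_{15},v_{16},v_{17},v_{32}$, $v_8\mapsto v_{18}$, weight $(4,0,0)\mapsto$ weight $0$) are those of $E_0$. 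Hence $\Delta(F_0)u_1=\Delta(F_0)u_2=\Delta(F_0)u_3=0$, and your steps 1 and 2 yield $0=0$.

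More importantly, the principle that checking the $F_0$-relation on highest weight vectors gives enough equations is false. Since $E_iF_0=F_0E_i$ for $i=1,2,3$ (and $\tau_z$ fixes $E_i$), $\Delta(F_0)$ maps singular vectors to singular vectors of weight raised by $\theta$. On singular vectors the relation therefore only links $u_{5a},u_{5b}\mapsto\operatorname{span}\{u_{4a},u_{4b},u_{4c}\}\mapsto\C u_1$. This gives $9$ scalar equations involving only $g_1$, $g_0$ and the normalization. The coefficients of $P^q_{(8,1,0)}$ and $P^q_{(7,0,1)}$ never enter and remain undetermined.

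The repair is to use the correct pairing. Consider the subspace on which $\check R(z)\Delta(E_0)=\Delta(E_0)\check R(z)$ holds. Because $E_0$ commutes with $F_1,F_2,F_3$ and $\check R(z)$ is $U_q\mathrm{G}_{2\vert 1}$-linear, this subspace is stable under $K_i^{\pm1/2}$ and $F_i$, $i=1,2,3$. The tensor square is completely reducible, so it is generated by $u_1,\dots,u_{5b}$ under $U_q\mathfrak{n}_-$. Hence checking the $E_0$-relation on the eight singular vectors suffices.

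With $E_0$, which lowers weights by $\theta$, your steps make sense. The equations from $u_1,u_2,u_3$ involve the coefficients of the multiplicity-one projectors together with $g_1$. The vectors $\Delta(E_0)u_{4\ast}$ reach weight zero and couple $g_1$ to $g_0$. Equivalent alternatives are to keep $F_0$ but test it on lowest weight vectors, or, as the paper does, to impose the full operator identity with $F_0$ on the whole $1024$-dimensional space, which needs no reduction at all. With this correction your plan coincides with the paper's computation. Also note that the normalization should read $\check R(z)u_1=u_1$, since $u_1$ is already $v_1\otimes v_1$.
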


\begin{rmk}
    As in the case of $U_q\tl{\text{F}}_{3\vert 1}$, here also, in some sense, there are both zeros and poles at the same location, $z=q^{-6}$. Further, the determinants of 
    $$g_1(z)=\frac{q^5\ f_1(z)}{(1-q^{-4}z)(1-q^6 z)(1-q^8 z)}\ ,\quad g_2(z)=\frac{f_0(z)}{(1-q^{-4}z)(1-q^{-6}z)(1-q^6 z)(1-q^8 z)}\ ,$$
     are given by 
     $$\det\big(g_1(z)\big)=q^{18}\frac{(1-q^4z)(1-q^{-6}z)(1-q^{-8}z)^2}{(1-q^{-4}z)(1-q^6z)(1-q^8z)^2}\ ,\quad \det\big(g_2(z)\big)=q^4\frac{(1-q^4z)(1-q^{-8}z)}{(1-q^{-4}z)(1-q^8z)}\ .$$
     Similar to $U_q$F$_{3\vert 1}$, we have poles at $z=q^{-6}$ and $z=q^6$ simultaneously.
\end{rmk}

The tensor product $\tl{L}_{(4,0,0)}(z)\otimes\tl{L}_{(4,0,0)}(1)$ is irreducible except for some special values of $z$. These special values of $z$, and the corresponding non-trivial submodules, are listed below. Note that the submodules for $z=q^k$, are quotient modules for $z=q^{-k}$.
\begin{enumerate}
    \item $z=q^4$: We have a $992$-dimensional submodule isomorphic to $L_{(8,0,0)}\oplus L_{(8,1,0)}\oplus L_{(7,0,1)}\oplus  2\,L_{(4,0,0)}\oplus L_{(0,0,0)}$ as a $U_q$G$_{2\vert 1}$-module, and generated by $u_1$.
    \item $z=q^{-4}$: We have a $32$-dimensional submodule isomorphic to $L_{(4,0,0)}\oplus L_{(0,0,0)}$ as a $U_q$G$_{2\vert 1}$-module, and generated by $u_{4a}-u_{4b}-u_{4c}$.
    \item $z=q^{6}$: We have a $321$-dimensional submodule isomorphic to $ L_{(7,0,1)}\oplus L_{(4,0,0)}\oplus L_{(0,0,0)}$ as a $U_q$G$_{2\vert 1}$-module, and generated by $u_3$. This submodule is contained inside a $1023$-dimensional submodule isomorphic to $L_{(8,0,0)}\oplus L_{(8,1,0)}\oplus L_{(7,0,1)}\oplus 3\,L_{(4,0,0)}\oplus L_{(0,0,0)}$ as a $U_q$G$_{2\vert 1}$-module, and generated by $u_1$.
    \item $z=q^{-6}$: We have a $1$-dimensional submodule isomorphic to $L_{(0,0,0)}$ as a $U_q$G$_{2\vert 1}$-module, and generated by $u_{5a}-u_{5b}$. This submodule is contained inside a $703$-dimensional submodule generated by $u_1$, and isomorphic to $L_{(8,0,0)}\oplus L_{(8,1,0)} 2\,L_{(4,0,0)}\oplus L_{(0,0,0)}$ as a $U_q$G$_{2\vert 1}$-module
    \item $z=q^8$: We have a $800$-dimensional submodule isomorphic to $L_{(8,1,0)}\oplus L_{(7,0,1)}\oplus  2\,L_{(4,0,0)}\oplus L_{(0,0,0)}$ as a $U_q$G$_{2\vert 1}$-module, and generated by $u_2$.
    \item $z=q^{-8}$: We have a $224$-dimensional submodule isomorphic to $L_{(8,0,0)}\oplus L_{(4,0,0)}\oplus L_{(0,0,0)}$ as a $U_q$G$_{2\vert 1}$-module, and generated by $u_1$.
\end{enumerate}

We consider the $q\to 1$ rational limit. With our normalization of the singular vectors $u_1,\dots, u_{5b}$, the limit of these vectors exists, and therefore the limit of the projectors exists. Let $P_{(a,b,c)}$ be the limit $q\to 1$ of the $U_q$G$_{2\vert 1}$-projectors $P_{(a,b,c)}^q$ appearing in the expression \eqref{superG: Rqz} of $\check{R}(z)$.

\begin{cor}
    The rational $R$-matrix $\check{R}(u)$ for the Yangian of G$_{2\vert 1}$ is given by 
    \eq{\label{superG: rational R matrix}
\check{R}(u) =\ & P_{(8,0,0)} + \frac{4-u}{4+u}\,P_{(8,1,0)} + \frac{(3-u)(4-u)}{(3+u)(4+u)}\,P_{(7,0,1)} + \frac{\bar f_1(u)}{(2-u)(3+u)(4+u)}\otimes P_{(4,0,0)} \\
& + \frac{\bar f_0(u)}{(2-u)(3-u)(3+u)(4+u)}\otimes P_{(0,0,0)}\ ,
}
where the matrices $\bar f_1(u)$ and $\bar f_0(u)$ are given by 
\bee{
& \bar f_1(u)=\begin{bmatrix}
    24-10u+3u^2+u^3 &  -8\,u &  -8\,u 
    \vspace{5pt} \\
    -18\,u & 24 & u(5-u)(2+u) 
    \vspace{5pt} \\
    -18\,u & u(5-u)(2+u) & 24
\end{bmatrix} \ ,
\vspace{10pt} \\
\vspace{10pt} \\
& \bar f_0(u)= \begin{bmatrix}
    72-30u-u^2+6u^3+u^4 & -8\,u 
    \vspace{5pt} \\
    -216\,u & 72+30u-u^2-6u^3+u^4
\end{bmatrix} \ .
}
\p{
We substitute $z=q^{2u}$ in \eqref{superG: Rqz} and take limit $q\to 1$. Apriori, the $q\to 1$ limit of $\check{R}(q^{2u})$ does not exist. To have a limit we conjugate $\check{R}(q^{2u})$ by $A\otimes A$ where the operator $A:V\to V$ is given by $A\,v_i=v_i$ for $1\le i\le 31$ and $A\,v_{32}=\frac{1}{[2]^{\mr{i}}}v_{32}$. Thus, we obtain $\check{R}(u)=\lim_{q\to 1}(A\otimes A)\check{R}(q^{2u})(A\otimes A)^{-1}$ as above in \eqref{superG: rational R matrix}. Note that conjugation by an operator of the form $A\otimes A$ preserves the quantum Yang-Baxter equation and, for our choice of  $A$, $A\otimes A$  commutes with action of  $U_q{\text{G}}_{2|1}$ but not with action of $U_q\tl{\text{G}}_{2|1}$.
}
\end{cor}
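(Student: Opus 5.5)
The statement to prove is the last corollary: the rational $R$-matrix \eqref{superG: rational R matrix} for G$_{2\vert 1}$. The plan is to derive it from Theorem \ref{superG:Rcheck}. We substitute $z=q^{2u}$ into \eqref{superG: Rqz}, conjugate by $A\otimes A$ with $A v_{32}=v_{32}/[2]^{\mr{i}}$ and $A v_i=v_i$ for $i\le 31$, and take $q\to1$ termwise.

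\emph{Step 1: conjugation.} Since $A\otimes A$ commutes with $U_q$G$_{2\vert 1}$, it preserves each isotypic component of \eqref{superG: tensor adjoint}. The scalar projectors $P^q_{(8,0,0)}$, $P^q_{(8,1,0)}$, $P^q_{(7,0,1)}$ are therefore untouched. On the multiplicity spaces, $A\otimes A$ acts diagonally:
\begin{itemize}
\item in the basis $u_{4a},u_{4b},u_{4c}$ by $\mathrm{diag}(1,1/[2]^{\mr{i}},1/[2]^{\mr{i}})$, since $u_{4b}=v_1\otimes v_{32}$ and $u_{4c}=v_{32}\otimes v_1$;
\item in the basis $u_{5a},u_{5b}$ by $\mathrm{diag}(1,1/([2]^{\mr{i}})^2)$.
\end{itemize}
Hence $f_1$ has its first row multiplied by $1/[2]^{\mr{i}}$ and its first column by $[2]^{\mr{i}}$; $f_0$ has $\eta$ rescaled by $([2]^{\mr{i}})^2$ and $\rho$ by $([2]^{\mr{i}})^{-2}$. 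The first task is to fix which convention (row versus column) matches how $f_1$ acts on the singular vectors. This is checked by comparing with the $(1,2)$ entry, which must become finite.

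\emph{Step 2: scalar coefficients.} With $z=q^{2u}$ we have $q^a(1-q^{-a}z)/(1-q^a z)\to -(a-2u)/(a+2u)$ up to normalization. Writing $1-q^{b}z=-(b+2u)\log q+O(\log^2q)$, the ratios give $(4-u)/(4+u)$ after the overall sign. The normalization of $u$ must be fixed consistently, since the paper's answer has $4\pm u$ rather than $8\pm 2u$; I will track it carefully but expect it to come out uniformly. The $P_{(7,0,1)}$ coefficient follows the same way.

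\emph{Step 3: the matrix blocks, the main obstacle.} Each entry of $q^5 f_1(q^{2u})$ vanishes at $q=1$ to the same order as the cubic denominator, after the rescaling of Step 1. Note that $[2]^{\mr{i}}=(q^2-q^{-2})/(q+q^{-1})\sim 2\cdot\tfrac{1}{1}\cdot(q-q^{-1})$ vanishes to first order and $[3]^{\mr{i}}\to1$. For example, $\B\,z(1-z)$ carries $([2]^{\mr{i}})^3$ times $(1-z)=O(\log q)$, which is fourth order. After dividing by $[2]^{\mr{i}}$ it is third order and gives a constant times $u$, matching $-8u$. Likewise $\G z(1-z)$, being first order in $[2]^{\mr{i}}$ and multiplied by $[2]^{\mr{i}}$, is third order, giving $-18u$. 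The diagonal entry $-q^{-3}+\A_q z-\A_{q^{-1}}z^2+q^3z^3$ must be expanded to third order in $h=\log q$ using $\A_q$ explicitly.

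This Taylor expansion (of $\A_q$, $\nu$, $\zeta$, $\xi$ to the needed orders) is the laborious part. I would do it by computer algebra, expanding in $h$ and extracting the leading coefficient, which should be a polynomial in $u$. The same applies to $f_0$, which needs fourth order, for the entries $72\mp30u-\dots$. Finally, I would verify consistency: the determinants of the limits should be the $q\to1$ limits of the determinants in the remark after Theorem \ref{superG:Rcheck}, which checks the computed polynomials.
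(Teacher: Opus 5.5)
Your proposal is correct and is exactly the paper's argument: substitute $z=q^{2u}$ into \eqref{superG: Rqz}, conjugate by $A\otimes A$ (which acts diagonally on the multiplicity spaces spanned by $u_{4a},u_{4b},u_{4c}$ and $u_{5a},u_{5b}$), and let $q\to1$ entrywise; the off-diagonal entries become finite because $\B$ and $\eta$ carry one, respectively two, surplus factors of $[2]^{\mr{i}}$, while $\G$ and $\rho$ are short by the same amount. There are two harmless slips: $[2]^{\mr{i}}=q-q^{-1}$ exactly (so $[2]^{\mr{i}}\sim 2\log q$, not twice that), and, to match your treatment of $f_1$, $\eta$ must be divided and $\rho$ multiplied by $([2]^{\mr{i}})^2$ --- your plan to fix the row/column convention by demanding finiteness of the off-diagonal entries catches this automatically.
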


{\bf Acknowledgments.\ }
The authors are partially supported by Simons Foundation grant number \#709444.

\end{document}